\documentclass[11pt]{amsart}
\usepackage{amsmath}
\usepackage{amssymb, verbatim}
\usepackage{color}
\usepackage{graphicx}

\title[]{Concavity of the Lagrangian Phase Operator and Applications}
\author{Tristan C. Collins}
\address{Department of Mathematics, Harvard University, 1 Oxford St., Cambridge, MA}
\thanks{Supported in part by National Science Foundation grants DMS-1506652 (T.C.C.) and DMS-12-66033 (S.P.).}
\email{tcollins@math.harvard.edu}
\author{Sebastien Picard}
\address{Department of Mathematics, Columbia University, 2990 Broadway, New York, NY}
\email{picard@math.columbia.edu}
\author{Xuan Wu}
\email{xuanw@math.columbia.edu}

\theoremstyle{plain}
\newtheorem{thm}{Theorem}[section]
\newtheorem{prop}[thm]{Proposition}

\newtheorem{lem}[thm]{Lemma}
\newtheorem{cor}[thm]{Corollary}

\theoremstyle{definition}

\numberwithin{equation}{section}

\newcommand{\del}{\partial}
\newcommand{\p}{\partial}

\newcommand{\la}{\lambda}
\newcommand{\dbar}{\overline{\del}}

\renewcommand{\leq}{\leqslant}
\renewcommand{\geq}{\geqslant}

\renewcommand{\epsilon}{\varepsilon}
\renewcommand{\phi}{\varphi}

\raggedbottom

\begin{document}

\begin{abstract}
We study the Dirichlet problem for the Lagrangian phase operator, in both the real and complex setting.  Our main result states that if $\Omega$ is a compact domain in $\mathbb{R}^{n}$ or $\mathbb{C}^n$, then there exists a solution to the Dirichlet problem with right-hand side $h(x)$ satisfying $|h(x)| > (n-2)\frac{\pi}{2}$ and boundary data $\phi$ if and only if there exists a subsolution.
\end{abstract}

\maketitle

\section{Introduction}
In this paper we study the Dirichlet problem, in both the real and complex settings, for a certain non-linear elliptic operator which we call the Lagrangian phase operator.  The Dirichlet problem for a broad class of fully nonlinear equations was studied in the groundbreaking paper of Caffarelli-Nirenberg-Spruck ~\cite{CNS3}. They considered equations  of the form
\begin{equation}\label{eq:GenDirProb}
F(D^2 u) = h(x),\ \ u|_{\p \Omega} = \phi
\end{equation}
for an unknown function $u:\Omega \rightarrow \mathbb{R}$ under various conditions on $F$ and $\Omega$, generalizing previous work of ~\cite{CNS1,Ivo1, Ivo2, K2} and others on Monge-Amp\`ere equations. Since then, the Dirichlet problem for elliptic operators with various structural constraints has been studied by many authors, including generalizations to a larger class of equations and domains ~\cite{CW,G2,G4,ITW,T1}.  In each of these works, the requirement of concavity of $F$ on the space of symmetric matrices is one of the essential requirements for the solvability of the equation.
\par 
In this paper, we consider the Dirichlet problem for the Lagrangian phase operator in both the real and complex setting. Namely, in the real case, let $\Omega \subset \mathbb{R}^{n}$ be a compact domain, and suppose $u:\Omega \rightarrow \mathbb{R}$.  Let $\lambda_1, \dots, \lambda_n$ denote the eigenvalues of the Hessian $D^2 u$. We consider the boundary value problem
\begin{equation}\label{eq:DirProb}
\begin{aligned}
F(D^{2}u) := \sum_{i=1}^{n} \arctan \lambda_i &= h(x),\\
u|_{\del \Omega} &= \phi.
\end{aligned}
\end{equation}
In the complex case, we consider the same equation where now $\Omega \subset \mathbb{C}^{n}$, and $\lambda_1, \ldots, \lambda_n$ are the eigenvalues of the complex Hessian $\del\dbar u$.  In both cases, the properties of the operator $F(\cdot)$ are intimately tied to the range of the right-hand side of~\eqref{eq:DirProb}.  To be precise, if $h(x) \geq (n-1)\frac{\pi}{2}$, then $F$ is concave, while if $(n-2)\frac{\pi}{2} \leq h(x) \leq (n-1)\frac{\pi}{2}$, then $F$ will have concave level sets \cite{Y1}, but can fail to be concave in general (see Lemma~\ref{lem:cone_facts} below).  Furthermore, if $0\leq h(x) < (n-2)\frac{\pi}{2}$, then $F$ fails to have even concave level sets, and examples of Nadirashvili-Vl\u{a}du\c{t} \cite{NV} and Wang-Yuan \cite{WY1} show that solutions of \eqref{eq:DirProb} can fail to have interior estimates.  Thus, from the analytic point of view, it is natural to restrict our study of~\eqref{eq:DirProb} to the case when $h(x) > (n-2)\frac{\pi}{2}$; following \cite{JY, CJY, Y1} we call this the ``supercritical phase" condition.  Note that $h(x) < -(n-2)\frac{\pi}{2}$ can be treated similarly.
\par
The work of Caffarelli-Nirenberg-Spruck \cite{CNS3} shows that there is an intimate connection between the geometry of the domain $\Omega$, and the solvability of the Dirichlet problem of a general concave, elliptic operator.  Consider a motivating, simple example: in order to solve~\eqref{eq:GenDirProb} for the real Monge-Amp\`ere operator $F(D^2u) = \log(\det D^{2}u)$, there must exist a convex function $\underline{u} : \Omega \rightarrow \mathbb{R}$ with $\underline{u}|_{\del \Omega} = \phi$.  In particular, if $\phi \equiv 0$, then $\Omega$ must be a convex domain in $\mathbb{R}^{n}$.  Subsequently, Guan \cite{G4} showed that the conditions imposed in \cite{CNS3} on the geometry of $\Omega$ can be dropped, provided one assumes instead the existence of an admissible subsolution.  This idea was subsequently extended to Riemannian manifolds with boundary \cite{G2,G3}, Monge-Amp\`ere type equations on complex manifolds \cite{GuanCMA,GL,GS}, and to a class of fully non-linear elliptic equations defined on domains in $\mathbb{C}^{n}$ \cite{LiSY}, to name just a few.  There are also extensions to compact Riemannian, and Hermitian manifolds where the notion of a subsolution needs to be amended \cite{G1, S1}.
\par
In this paper we apply these ideas to the Lagrangian phase operator.  Precisely, we prove
\begin{thm}\label{thm:DirProb}
Let $\Omega \subset \mathbb{R}^n$ be a $C^4$ bounded domain. Let $\phi: \del \Omega \rightarrow \mathbb{R}$ be in $C^4(\del \Omega)$ and $h: \overline{\Omega} \rightarrow [(n-2){\pi \over 2} + \delta, \, n \frac{\pi}{2})$ in $C^{2}(\overline{\Omega})$,  where $\delta \in \mathbb{R}_{>0}$. If there exists a function $\underline{u}: \Omega \rightarrow \mathbb{R} $ in $C^4(\overline{\Omega})$ such that
\begin{equation}\label{sub_def}
\begin{aligned}
\sum_i \arctan \underline{\lambda}_i &\geq h(x) \ {\rm in} \ \Omega,\\
\underline{u}|_{\del \Omega} &= \phi \ \ {\rm on} \ \del \Omega,
\end{aligned}
\end{equation}
where $\underline{\lambda}_i$  are the eigenvalues of $D^2\underline{u}$, then the Dirichlet problem~\eqref{eq:DirProb} admits a unique $C^{3,\alpha}(\overline{\Omega})$ solution. If $\Omega$, $h$, $\phi$, and $\underline{u}$ are smooth, then the solution $u$ is smooth.
\end{thm}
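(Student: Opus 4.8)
The plan is to prove Theorem~\ref{thm:DirProb} by the continuity method together with uniform a priori estimates, in the tradition of Caffarelli--Nirenberg--Spruck \cite{CNS3} and Guan \cite{G4}. We deform the right-hand side from the subsolution to $h$: for $t\in[0,1]$ set $h_t := (1-t)\sum_i \arctan \underline{\lambda}_i + t\, h$ and consider $F(D^2 u_t)=h_t$ in $\Omega$, $u_t|_{\del\Omega}=\phi$. Since $(n-2)\frac{\pi}{2}+\delta \le h \le \sum_i \arctan \underline{\lambda}_i < n\frac{\pi}{2}$, and the fixed $C^4$ function $\underline{u}$ satisfies $\sum_i \arctan\underline{\lambda}_i \le n\frac{\pi}{2}-\delta''$ for some $\delta''>0$, each $h_t$ lies in a fixed compact subinterval of $((n-2)\frac{\pi}{2},\, n\frac{\pi}{2})$, belongs to $C^2(\overline{\Omega})$, and $\underline{u}$ is a subsolution of the $t$-problem; at $t=0$ the equation is solved by $\underline{u}$ (which already lies in $C^{3,\alpha}(\overline{\Omega})$). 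Let $T\subseteq[0,1]$ be the set of $t$ for which the $t$-problem admits a solution in $C^{3,\alpha}(\overline{\Omega})$. We must show $T$ is nonempty, open, and closed.

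Openness is the implicit function theorem in H\"older spaces: $F$ is elliptic everywhere, since $\del F/\del\lambda_i=(1+\lambda_i^2)^{-1}>0$, so the linearization $v\mapsto F^{ij}(D^2 u_t)\,\del_i\del_j v$ --- with $F^{ij}=\del F/\del(D^2 u)_{ij}$ positive definite and, for $u_t\in C^{3,\alpha}$, of class $C^{1,\alpha}$ --- is an isomorphism from $\{v\in C^{3,\alpha}(\overline{\Omega}): v|_{\del\Omega}=0\}$ onto $C^{1,\alpha}(\overline{\Omega})$ by Schauder theory and the maximum principle, and one solves the nearby equations perturbatively. Uniqueness of solutions (and the comparison with $\underline{u}$ used below) follows from a comparison principle; since $F$ is \emph{not} concave in the supercritical range, we obtain it by replacing $F$ with $\Psi\circ F$ for a suitable smooth, increasing, concave $\Psi$ --- a move justified, as explained below, by the fact that the superlevel sets $\{F\ge c\}$, $c\ge (n-2)\frac{\pi}{2}$, are convex (Lemma~\ref{lem:cone_facts}).

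Closedness reduces to a uniform (in $t$) a priori bound in $C^{2,\alpha}(\overline{\Omega})$; Schauder bootstrap then yields a uniform $C^{3,\alpha}$ bound, and a compactness argument produces the limit solution at $t=1$. We would establish, in order: (i) a $C^0$ bound --- the lower bound $\underline{u}\le u_t$ from the comparison principle, and the upper bound from the supercritical constraint $F(D^2 u_t)\ge (n-2)\frac{\pi}{2}+\delta$, which forces $D^2 u_t\ge c_0(\delta)\,I$, so that $u_t+\frac{1}{2}|c_0|\,|x|^2$ is convex and hence bounded on $\overline{\Omega}$ by its boundary values; (ii) a $C^1$ bound --- interior gradient bound from the maximum principle applied to $\del_k u_t$ (which solves $F^{ij}\del_{ij}(\del_k u_t)=\del_k h_t$), boundary gradient bound from barriers built from $\underline{u}$ and the distance to $\del\Omega$; (iii) a $C^2$ bound --- a global estimate $\sup_\Omega|D^2 u_t|\le C(1+\sup_{\del\Omega}|D^2 u_t|)$ from the maximum principle for an auxiliary function (such as $\log\lambda_{\max}(D^2 u_t)$ plus lower-order terms, possibly involving $\underline{u}$), together with the boundary Hessian estimate, whose tangential part comes from $\phi$ and whose mixed and double-normal parts follow, as in Guan's work, from barriers using the subsolution; in both the interior computation and the boundary construction, convexity of the level sets of $F$ plays the role that concavity of $F$ plays classically; and (iv) the $C^{2,\alpha}$, or Evans--Krylov, estimate. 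Step (iv) is where the title's concavity result is decisive: on the compact set of symmetric matrices permitted by the $C^2$ bound --- on which $F$ takes values in a fixed compact subinterval of $((n-2)\frac{\pi}{2}, n\frac{\pi}{2})$ and $\nabla F\ne 0$ --- convexity of the superlevel sets of $F$ is equivalent to an inequality $D^2 F\le \mu\,\nabla F\otimes\nabla F$ with $\mu$ bounded, so choosing $\Psi$ with $-\Psi''/\Psi'$ larger than the resulting constant makes $\Psi\circ F$ uniformly concave; Evans--Krylov then applies to the equivalent equation $\Psi(F(D^2 u_t))=\Psi(h_t)$ and yields the interior $C^{2,\alpha}$ estimate, the boundary $C^{2,\alpha}$ estimate following from the same ingredients. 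All constants, and $\Psi$ itself, depend only on $n$, $\delta$, $\Omega$, $\phi$, $h$, and $\underline{u}$, hence are uniform in $t$.

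The main obstacle --- and the reason the supercritical hypothesis $h>(n-2)\frac{\pi}{2}$ cannot be relaxed --- is precisely the failure of genuine concavity of $F$: each step that in the Caffarelli--Nirenberg--Spruck/Guan framework rests on concavity (the comparison principle, the boundary double-normal Hessian estimate, and above all Evans--Krylov) must be rederived from the weaker fact that the superlevel sets of $F$ are convex, through the concavity of the reparametrization $\Psi\circ F$. Making this substitution quantitative and keeping all constants uniform across the continuity family is the technical heart of the argument; within it, the interior Hessian estimate in the merely supercritical range $(n-2)\frac{\pi}{2}<h_t<(n-1)\frac{\pi}{2}$ (where interior estimates are genuinely subtle, the threshold being the loss of concave level sets below $(n-2)\frac{\pi}{2}$, cf.\ \cite{NV, WY1}) and the boundary double-normal estimate are the most delicate pieces. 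Finally, once solvability in $C^{3,\alpha}$ is established, standard Schauder bootstrapping gives smoothness of $u$ when the data $\Omega$, $h$, $\phi$, $\underline{u}$ are smooth.
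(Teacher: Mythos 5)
Your overall architecture (continuity method along $h_t=(1-t)F(D^2\underline{u})+th$, openness by the implicit function theorem, closedness by a priori estimates, and a concave reparametrization $\Psi\circ F$ playing the role of a concave operator) is exactly the paper's strategy, and your $C^0$, $C^1$ and uniqueness steps are fine. But there is a genuine gap in where and how you obtain the concavity. You construct $\Psi$ so that $\Psi\circ F$ is concave only ``on the compact set of symmetric matrices permitted by the $C^2$ bound,'' and you invoke it only for Evans--Krylov. The interior second-order estimate $\sup_\Omega|D^2u|\le C(1+\sup_{\del\Omega}|D^2u|)$ itself, however, is proved by differentiating the equation $\Psi(F(D^2u))=\Psi(h)$ twice and discarding the term $-(\Psi\circ F)^{ij,k\ell}u_{ijk}u_{k\ell k}$; for that you need $\Psi\circ F$ concave on the \emph{entire unbounded} region $\{\sum_i\arctan\lambda_i\ge(n-2)\frac{\pi}{2}+\delta\}$, since at the point being estimated the Hessian is exactly what you do not yet control. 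Your proposed derivation --- level-set convexity is equivalent to $D^2F\le\mu\,\nabla F\otimes\nabla F$ with bounded $\mu$, then take $-\Psi''/\Psi'\ge\mu$ --- only works on a compact set where the level sets are \emph{strictly} convex; on the unbounded supercritical region no uniform $\mu$ comes for free (indeed the paper remarks that no single $A$ works on any symmetric cone containing this region). The paper's Lemma~\ref{Concave_G} closes precisely this gap: it shows $-e^{-Af}$ is concave on the whole supercritical set by an explicit determinant identity combined with the quantitative inequality $\sum_i\lambda_i^{-1}\le-\tan\delta$ when $\lambda_n<0$ (Lemma~\ref{lem:cone_facts}(4)). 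Without this global statement your step (iii) is circular.

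A second, smaller but real, omission is the double-normal boundary estimate, which you delegate to ``barriers using the subsolution, as in Guan's work.'' Because the operator does not satisfy the structural hypotheses of those references, the paper has to run Trudinger's argument from scratch: it introduces the restricted operator $\tilde{G}$ acting on the tangential Hessian $u_{\alpha\beta}$, uses the Schur--Horn theorem to show the tangential eigenvalues stay in the $(n-1)$-dimensional supercritical cone (so $\tilde{G}$ is concave), proves a boundary analogue of the subsolution inequality via the Caffarelli--Nirenberg--Spruck eigenvalue perturbation lemma, and then builds the barrier $\Phi$ from $\tilde{G}_0^{\alpha\beta}$. None of these ingredients is generic, and your sketch would need them spelled out to constitute a proof of the hardest step.
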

\par
Similarly, in the complex case we prove
\par
\begin{thm}\label{thm:CDirProb}
Let $\Omega \subset \mathbb{C}^n$ be a $C^4$ bounded domain. Let $\phi: \del \Omega \rightarrow \mathbb{R}$ be in $C^4(\del \Omega)$ and $h: \overline{\Omega} \rightarrow [(n-2){\pi \over 2} + \delta, \, n \frac{\pi}{2})$ in $C^2(\overline{\Omega})$, where $\delta \in \mathbb{R}_{>0}$. If there exists a function $\underline{u}: \Omega \rightarrow \mathbb{R}$ in $C^4(\overline{\Omega})$ such that
\begin{equation}
\begin{aligned}
\sum_i \arctan \underline{\lambda}_i &\geq h(z) \ {\rm in} \ \Omega,\\
\underline{u}|_{\del \Omega} &= \phi \ \ {\rm on} \ \del \Omega,
\end{aligned}
\end{equation}
where $\underline{\lambda}_i$  are the eigenvalues of $\del\dbar\underline{u}$, then there exists a unique $C^{3,\alpha}(\overline{\Omega})$ solution $u: \Omega \rightarrow \mathbb{R}$ of the Dirichlet problem
\begin{equation}
\begin{aligned}
\sum_{i=1}^{n} \arctan \lambda_i &= h(z) \ {\rm in} \ \Omega,\\
\underline{u}|_{\del \Omega} &= \phi \ \ {\rm on} \ \del \Omega,
\end{aligned}
\end{equation}
where $\lambda_i$ are the eigenvalues of $\del\dbar u$. If $\Omega$, $h$, $\phi$, and $\underline{u}$ are smooth, then the solution $u$ is smooth.
\end{thm}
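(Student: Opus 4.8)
The plan is to follow the by-now-standard method of continuity combined with a priori estimates, with the crucial new ingredient being the concavity of the Lagrangian phase operator on the supercritical level set (Lemma~\ref{lem:cone_facts}). The method of continuity requires establishing a priori $C^{2,\alpha}(\overline{\Omega})$ bounds for solutions; by Evans--Krylov and Schauder theory, the heart of the matter is to bound $\|u\|_{C^2(\overline{\Omega})}$, and then the openness of the continuity path follows from the implicit function theorem using ellipticity, while uniqueness follows from the maximum principle applied to the difference of two solutions (the operator is degenerate elliptic and concave on the relevant set, so the comparison principle applies). First I would set up the family $F(D^2 u_t) = h_t := t h + (1-t)h_0$ joining the subsolution equation to the target, keeping $h_t \geq (n-2)\frac{\pi}{2}+\delta$ throughout, and with $u_t|_{\partial\Omega}=\phi$.

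The $C^0$ estimate should come from the maximum principle: $\underline{u}$ is a subsolution and provides a lower barrier after the comparison principle, while an upper barrier comes from a solution on a ball containing $\Omega$ or from the supercritical structure (the phase being bounded above by $n\frac{\pi}{2}$ forces a quantitative convexity-type control). The $C^1$ estimate splits into the interior gradient bound and the boundary gradient bound; the boundary gradient estimate is handled by constructing upper and lower barriers near $\partial\Omega$ using $\underline{u}$ together with a multiple of the defining function of $\Omega$, exploiting that $h$ stays in the supercritical range so that large multiples of the distance function are admissible subsolutions, and the interior gradient bound follows by differentiating the equation and applying the maximum principle (or a Bernstein-type argument). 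The second-order boundary estimate is the standard, technically involved piece: one bounds tangential-tangential derivatives directly from the boundary data, tangential-normal derivatives via a barrier argument, and the double-normal derivative using the subsolution to show the equation is uniformly elliptic up to the boundary.

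The main obstacle — and the reason the paper exists — is the interior $C^2$ estimate, precisely because $F$ is \emph{not} concave on the full space of symmetric matrices when $h < (n-1)\frac{\pi}{2}$; it is only concave on the level set $\{F = c\}$ for $c$ supercritical. Thus the usual Evans--Krylov-style argument does not apply verbatim. Here I would use the subsolution $\underline{u}$ crucially: testing the linearized operator against the quantity $\lambda_{\max}(D^2 u)$ (or $\log$ of a largest-eigenvalue regularization) plus correction terms involving $|\nabla u|^2$ and a term of the form $A\underline{u}$ or $A(\underline{u}-u)$, one differentiates the equation twice and exploits concavity of $F$ \emph{along the level set} — the key point being that the linearized operator evaluated on $D^2\underline{u} - D^2 u$ has a favorable sign because $\underline{u}$ is a subsolution and both $F(D^2u)$ and $F(D^2\underline{u})$ lie in the supercritical region where the level sets are convex. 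This is the analogue of Guan's subsolution technique adapted to the non-concave phase operator, and controlling the sign of the third-order and concavity terms is where the supercriticality $h > (n-2)\frac{\pi}{2}$ enters decisively. Once the interior and boundary second-derivative bounds are in hand, Evans--Krylov applied on the level set (which is convex) gives interior $C^{2,\alpha}$, boundary $C^{2,\alpha}$ follows from the corresponding boundary regularity theory, and the continuity method closes; the complex case (Theorem~\ref{thm:CDirProb}) is handled by the identical scheme with $\partial\dbar$ in place of $D^2$, the algebraic facts about the phase operator being insensitive to the real-versus-complex distinction.
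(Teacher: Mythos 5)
Your skeleton (continuity method, subsolution barriers, $C^0$--$C^1$--$C^2$ estimates, Evans--Krylov, Schauder) is the right one, but there are two genuine gaps. First, you never identify the central mechanism of the proof: convexity of the level sets of $F$ is \emph{not} enough to run either the interior second-order estimate or Evans--Krylov, and the paper's key step is to replace $F$ by the genuinely concave operator $G=-e^{-AF}$, with $A=A(\delta)$ chosen using property (4) of Lemma~\ref{lem:cone_facts} (this is Lemma~\ref{Concave_G} and Corollary~\ref{cor:Gconcave}). Once one has honest concavity of $G$, the interior $C^2$ estimate is \emph{not} the main obstacle as you claim --- it is a short maximum-principle argument applied to $\Delta u+\tfrac{B}{2}|x|^2$ (Proposition~\ref{prop:c2_int}), and Evans--Krylov applies directly to $G(\p\dbar u)=-e^{-Ah}$. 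Your proposed substitute, a Pogorelov-type test function with $\lambda_{\max}(D^2u)$ and "concavity along the level set," is the strategy of the closed-manifold problem in \cite{CJY}; here the hard part is instead the \emph{boundary} $C^2$ estimate (double tangential, mixed, and double normal derivatives), which your proposal compresses into one sentence. In particular the double-normal bound requires the Trudinger--Guan argument with the restricted operator $\tilde{G}$ on $\p\Omega$, the Schur--Horn theorem to show $\tilde{G}$ stays in the concavity region, and a quantitative subsolution property on the boundary (Lemma~\ref{G_0_tau_lemma}); none of this is sketched.

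Second, the statement you are proving is the complex case, and your claim that it is "handled by the identical scheme" with the algebra "insensitive to the real-versus-complex distinction" is not correct. The real Hessian $D^2u$ is what must be bounded, not just $\p\dbar u$, so the interior estimate maximizes $D_\xi D_\xi u$ over real directions $\xi$ and uses $\Delta u\geq 0$. More seriously, the mixed tangential--normal boundary estimate breaks: the approximate tangential fields $T_\alpha$ no longer commute with the complex Hessian up to controlled terms, and one picks up an error of the form $F^{i\bar j}\p_i(u-\underline{u})_{y_n}\p_{\bar j}(u-\underline{u})_{y_n}$ (Lemma~\ref{lem:LTu}, following S.Y.~Li) that must be absorbed by adding $-(u_{y_n}-\underline{u}_{y_n})^2$ to the barrier. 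Finally, Evans--Krylov in the complex setting requires the extension of the concave operator $G$ from Hermitian to symmetric matrices (Wang's trick, cf.\ \cite{YW,TWWY}). These complex-specific steps are the content of \S\ref{cplx_diri} and cannot be dismissed as identical to the real case.
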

\par
Let us provide some geometric motivation for studying the Lagrangian phase operator.  In the real case, the special Lagrangian equation 
\[
\sum_i \arctan \lambda_i = \Theta,
\]
was introduced by Harvey-Lawson ~\cite{HL} in the study of calibrated geometries. Here $\Theta$ is a constant called the phase angle and $\lambda_i$ are, as before, the eigenvalues of $D^{2}u$.  In this case the graph $x \mapsto (x, \nabla u(x))$ defines a calibrated, minimal submanifold of $\mathbb{R}^{2n}$. Since the work of Harvey-Lawson, special Lagrangian manifolds have gained wide interest, due in large part to their fundamental role in the Strominger-Yau-Zaslow description of mirror symmetry \cite{SYZ}.  It is well known that any special Lagrangian manifold can locally be represented as the graph of a potential function $u$ solving the equation $F(D^2u) = \Theta$, where $F$ is the operator appearing in~\eqref{eq:DirProb}-- however, the potential function $u$ depends on a choice of a Lagrangian subspace.  This means that, in practice, one can obtain smooth special Lagrangian submanifolds whose potential function looks singular when written with respect to a certain choice of subspace.  For this reason, the problem of finding special Lagrangian submanifolds seems to require a more geometric approach.  One such approach is via the the Lagrangian mean curvature flow; we refer the reader to \cite{MTWsurv} and the references therein for an introduction to the vast literature in this active area of research.  In the case of the Dirichlet problem, these subtleties do not arise.
\par
There are several works by various authors which are related to Theorem ~\ref{thm:DirProb}. The Dirichlet problem~\eqref{eq:DirProb} was solved by Caffarelli-Nirenberg-Spruck \cite{CNS3} for $h(x) = (n-2)\frac{\pi}{2}$ when $n$ is even, and $h(x) = (n-1)\frac{\pi}{2}$ when $n$ is odd, under a condition on the geometry of the domain $\Omega$. In \cite{GZ}, Guan-Zhang solve the Dirichlet problem \eqref{eq:DirProb} with supercritical $h(x)$ for domains $\Omega \subset \mathbb{R}^3$ by studying another equivalent equation. Interior $C^1$ estimates for the minimal surface system were first established by M.-T. Wang in \cite{MTW}.  Interior estimates for the special Lagrangian equation with supercritical phase have been obtained by Warren-Yuan ~\cite{WY2} for the $C^1$ estimate and Wang-Yuan ~\cite{WY} for the $C^2$ estimate. In ~\cite{BW}, Brendle-Warren study a boundary value problem for the special Lagrangian equation in which the boundary data involves specifying the graph of the gradient map of the solution.
\par
In the complex setting, the Dirichlet problem solved in Theorem ~\ref{thm:CDirProb} is a local version of the deformed Hermitian-Yang-Mills (dHYM) equation for a holomorphic line bundle over a compact K\"ahler manifold.  The dHYM equation was discovered by Marino-Moore-Minasian-Strominger \cite{MMMS} as the requirement for a $D$-brane on the B-model of mirror symmetry to be supersymmetric (or BPS).  It was shown by Leung-Yau-Zaslow \cite{LYZ} that, in the semi-flat model of Mirror Symmetry, solutions of the dHYM equation correspond by a Fourier-Mukai transform to special Lagrangian submanifolds of the mirror. The study of the dHYM equation was initiated by ~\cite{JY} and recently solved, in the supercritical phase case, by Jacob, Yau, and the first author ~\cite{CJY} assuming a suitable notion of subsolution. The notion of subsolution from ~\cite{CJY} differs from ours since they study the equation on a closed manifold without boundary.  By contrast, the main difficulty in \cite{CJY} is the proof of the (interior) $C^2$ estimate, while here the main difficulty occurs in the boundary $C^2$ estimates, which have a rather different flavour.
\par
One of the hurdles arising in the current work is the lack of concavity of the operator $F$ appearing in~\eqref{eq:DirProb}. One of the key elements in the present paper is to transform the equation to find hidden concavity properties when $h(x) >  {(n-2)\pi \over 2}$; namely, we introduce an elliptic operator $G$ which is indeed concave (see Corollary ~\ref{cor:Gconcave}), and whose level sets agree with the level sets of $F$. Given concavity of $G$, our Theorem~\ref{thm:DirProb} is closely related to a general theorem of Guan \cite{G2}, though as remarked after Lemma~\ref{Concave_G}, the operator $G$ does not fit all of the general structural conditions imposed there.  In the complex case the operator $G$ is rather far from fitting the structural assumptions imposed in \cite{LiSY}.  As a result, 
we provide detailed proofs of both Theorem~\ref{thm:DirProb} and Theorem~\ref{thm:CDirProb}.
\par
In order to prove Theorem~\ref{thm:DirProb}, we derive a priori estimates for solutions of~\eqref{eq:DirProb}. The main difficulty is estimating the second derivatives of the solution at the boundary of the domain $\Omega$. The technique involved follows ideas of Guan ~\cite{G1,G2,G3} and Trudinger ~\cite{T1}. Once a priori estimates are obtained, Theorem~\ref{thm:DirProb} is obtained from a standard continuity method argument.
\par
In \S \ref{cplx_diri}, we prove Theorem~\ref{thm:CDirProb}. We adapt our argument from the real case and also use ideas of S.Y. Li \cite{LiSY}. We outline the steps which are the same as the real case, and carefully handle the new difficulties which arise in the complex setting.
\bigskip
\par
{\bf Acknowledgements:} We would like to thank D.H. Phong for all his guidance and support. We also thank Pei-Ken Hung for many helpful discussions.  The authors are grateful to Valentino Tosatti and Mu-Tao Wang for helpful comments.

\section{The Angle Operator and Hidden Concavity Properties}
We begin by stating some facts about functions with supercritical Lagrangian phase-- namely, functions satisfying
\[
F(D^2u) := \sum_{i} \arctan (\la_i) = h(x)
\]
with $h(x) >(n-2)\frac{\pi}{2}$. These properties are well-known and can be found in \cite{WY,Y1}.
\begin{lem}\label{lem:cone_facts}
Suppose $\lambda_1 \geq \lambda_2 \geq \cdots \geq \lambda_n$ are such that $\sum_i \arctan \lambda_i \geq (n-2) {\pi \over 2} + \delta$. The following properties hold (with constants depending on $\delta>0$)
\begin{enumerate}
\item $\lambda_1 \geq \lambda_2 \geq \cdots \geq \lambda_{n-1} >0, \ \ |\lambda_n| \leq \lambda_{n-1}$,
\item $\sum_i \lambda_i \geq 0$,
\item $\lambda_n \geq -C(\delta)$,
\item if $ \lambda_n < 0$, then $\sum_i \frac{1}{\lambda_i}\leq -\tan(\delta)$,
\item For any $\sigma \in ((n-2) {\pi \over 2}, n {\pi \over 2})$, we define the set 
\[ \Gamma^\sigma=\{\lambda \in \mathbb{R}^n: \sum_i \arctan \lambda_i>\sigma \}.
\]
Then $\Gamma^{\sigma}$ is a convex set, and $\del \Gamma^{\sigma}$ is a smooth convex hypersurface. 
\end{enumerate}
\end{lem}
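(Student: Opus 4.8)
The plan is to prove the five items in the order listed, since (2)--(4) fall out quickly once (1) is in hand, and the real content is item (5), whose proof in turn will use (1) and (4). I take $n \ge 2$ throughout (for $n=1$ the statements are vacuous or trivial). For (1): each summand $\arctan\lambda_i$ lies in $(-\tfrac{\pi}{2},\tfrac{\pi}{2})$, so if $\lambda_{n-1}\le 0$ then $\arctan\lambda_{n-1}\le 0$ and $\arctan\lambda_n\le 0$, whence $\sum_i\arctan\lambda_i<(n-2)\tfrac{\pi}{2}$, contradicting the hypothesis; hence $\lambda_1\ge\cdots\ge\lambda_{n-1}>0$. If moreover $\lambda_n<0$ with $|\lambda_n|>\lambda_{n-1}$, then $\arctan\lambda_{n-1}+\arctan\lambda_n<0$ (as $\arctan$ is odd and increasing), and the same estimate again gives a contradiction, so $|\lambda_n|\le\lambda_{n-1}$. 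Item (2) is then immediate: $\sum_i\lambda_i=\sum_{i\le n-2}\lambda_i+(\lambda_{n-1}+\lambda_n)\ge 0+(\lambda_{n-1}-|\lambda_n|)\ge 0$. For (3), $\arctan\lambda_n=\big(\sum_i\arctan\lambda_i\big)-\sum_{i\le n-1}\arctan\lambda_i\ge\big[(n-2)\tfrac{\pi}{2}+\delta\big]-(n-1)\tfrac{\pi}{2}=\delta-\tfrac{\pi}{2}$, so $\lambda_n\ge\tan(\delta-\tfrac{\pi}{2})=-\cot\delta=:-C(\delta)$.

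For (4) I would pass to trigonometric variables. Write $\lambda_i=\tan\theta_i$ with $\theta_i\in(-\tfrac{\pi}{2},\tfrac{\pi}{2})$, $\sum_i\theta_i\ge(n-2)\tfrac{\pi}{2}+\delta$, and set $\mu_i=\tfrac{\pi}{2}-\theta_i$, so that $\tfrac{1}{\lambda_i}=\cot\theta_i=\tan\mu_i$. By (1), $\mu_1,\dots,\mu_{n-1}\in(0,\tfrac{\pi}{2})$, while $\lambda_n<0$ forces $\mu_n\in(\tfrac{\pi}{2},\pi)$; also $\sum_i\mu_i=n\tfrac{\pi}{2}-\sum_i\theta_i\le\pi-\delta$, so $s:=\mu_1+\cdots+\mu_{n-1}<\tfrac{\pi}{2}-\delta$ and $\pi-\mu_n\ge s+\delta$. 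The key elementary fact is superadditivity of $\tan$: if $x,y>0$ and $x+y<\tfrac{\pi}{2}$ then $0<\tan x\tan y<1$, hence $\tan(x+y)=\frac{\tan x+\tan y}{1-\tan x\tan y}\ge\tan x+\tan y$. Iterating this on the $\mu_i$ gives $\sum_{i\le n-1}\tan\mu_i\le\tan s$, and applying it once more, $\tan\delta+\tan s\le\tan(\delta+s)\le\tan(\pi-\mu_n)=-\tan\mu_n$. Therefore $\sum_i\tfrac{1}{\lambda_i}=\sum_{i\le n-1}\tan\mu_i+\tan\mu_n\le\tan s-(\tan\delta+\tan s)=-\tan\delta$.

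Item (5) is the crux. Set $f(\lambda)=\sum_i\arctan\lambda_i$; it is smooth with $\nabla f=\big((1+\lambda_i^2)^{-1}\big)_i$ nowhere vanishing, so $\sigma$ is a regular value and $\partial\Gamma^\sigma=\{f=\sigma\}$ is a smooth hypersurface, and $\Gamma^\sigma$ is connected, being the image of the convex set $\{\theta\in(-\tfrac{\pi}{2},\tfrac{\pi}{2})^n:\sum_i\theta_i>\sigma\}$ under the diffeomorphism $\theta\mapsto(\tan\theta_i)_i$. By the standard criterion, $\overline{\Gamma^\sigma}$ is then convex provided $v^{\top}D^2f(\lambda)\,v\le 0$ whenever $f(\lambda)=\sigma$ and $\nabla f(\lambda)\cdot v=0$. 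Since $D^2f=\mathrm{diag}\big(-2\lambda_i(1+\lambda_i^2)^{-2}\big)$, writing $u_i=v_i(1+\lambda_i^2)^{-1}$ (so $\sum_i u_i=0$) this is equivalent to $\sum_i\lambda_i u_i^2\ge 0$. If $\lambda_n\ge 0$ this is clear; if $\lambda_n<0$, write $u_n=-\sum_{i\le n-1}u_i$ and apply Cauchy--Schwarz, legitimate since $\lambda_i>0$ for $i\le n-1$ by (1): $\big(\sum_{i\le n-1}u_i\big)^2\le\big(\sum_{i\le n-1}\lambda_i^{-1}\big)\big(\sum_{i\le n-1}\lambda_i u_i^2\big)$, whence $\sum_i\lambda_i u_i^2\ge\big(\sum_{i\le n-1}\lambda_i u_i^2\big)\big(1-|\lambda_n|\sum_{i\le n-1}\lambda_i^{-1}\big)\ge 0$, because $1-|\lambda_n|\sum_{i\le n-1}\lambda_i^{-1}=-|\lambda_n|\sum_i\lambda_i^{-1}\ge|\lambda_n|\tan\delta\ge 0$ by (4). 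Hence $\overline{\Gamma^\sigma}$ is convex and $\partial\Gamma^\sigma$ is a smooth convex hypersurface.

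I expect the main obstacle to be precisely item (5): the sign of the second fundamental form of $\{f=\sigma\}$ fails without the supercritical phase condition, and establishing it genuinely consumes (1) and (4) (the latter resting on the superadditivity of $\tan$), whereas (1)--(3) are routine bookkeeping with the range of $\arctan$. A secondary point requiring some care in (5) is the passage from the pointwise second-fundamental-form inequality to global convexity of $\overline{\Gamma^\sigma}$, where connectedness of $\Gamma^\sigma$ is used together with the classical fact that a connected region with locally convex $C^2$ boundary is convex. These facts are well known and can be found in \cite{WY,Y1}.
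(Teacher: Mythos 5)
Your proofs of (1)--(4) are correct and essentially identical to the paper's: (1)--(3) are the same bookkeeping with the range of $\arctan$, and your item (4) is exactly the paper's argument (set $\theta_i=\arctan\lambda_i$, pass to the complementary angles, and use superadditivity of $\tan$ on $(0,\pi/2)$), merely written out with the iteration made explicit. The genuine difference is item (5): the paper does not prove it at all, citing Lemma 2.1 of Yuan \cite{Y1}, whereas you supply a self-contained argument. Your computation there is correct and is a nice illustration of how (5) really rests on (4): the tangential Hessian condition reduces to $\sum_i\lambda_i u_i^2\geq 0$ under $\sum_i u_i=0$, and Cauchy--Schwarz plus $\sum_i\lambda_i^{-1}\leq -\tan\delta_\sigma$ (with $\delta_\sigma=\sigma-(n-2)\tfrac{\pi}{2}$) closes it. The one step you should not wave at is the local-to-global passage you flag at the end: pointwise positive semidefiniteness of the second fundamental form at a single boundary point does \emph{not} imply local convexity there (consider $\{y>x^3\}$ at the origin), so "connected region with locally convex boundary is convex" (Tietze--Nakajima) does not apply verbatim; one needs the global semidefiniteness theorem of Sacksteder/do Carmo--Lima, or better, one can sidestep the issue entirely by following Yuan and writing $\partial\Gamma^\sigma$ as a global graph $\lambda_n=\lambda_n(\lambda_1,\dots,\lambda_{n-1})$ over a convex domain and checking concavity of that graph function -- your second fundamental form computation is precisely the tangential part of that Hessian, so no new work is needed. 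With that caveat addressed, your route buys a self-contained proof where the paper relies on a citation.
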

\begin{proof} It is easy to see that the first $(n-1)$ eigenvalues must be positive. Next, we notice
\[
(n-2) {\pi \over 2} + \delta \leq \sum_i \arctan \lambda_i \leq (n-2) {\pi \over 2} + \arctan \lambda_{n-1} + \arctan \lambda_n.
\]
It follows that $\arctan \lambda_{n-1} + \arctan \lambda_n \geq 0$. This proves $|\lambda_{n}| \leq \lambda_{n-1}$. It follows that $\sum_i \lambda_i \geq 0$. From the above inequality, it also follows that $- {\pi \over 2} + \delta \leq \arctan \lambda_n$. This implies $\lambda_n \geq -C$. For 4, let $\theta_i=\arctan\lambda_i$, so that $\sum\theta_i\geq (n-2)\frac{\pi}{2} +\delta$. Estimate
\[
\begin{aligned}
\theta_n+\frac{\pi}{2} &\geq (n-2)\frac{\pi}{2}+\delta+\frac{\pi}{2}-\sum_{i\leq n-1}\theta_i\\
&= \delta+\sum_{i\leq n-1} \bigg( \frac{\pi}{2}-\theta_i \bigg).
\end{aligned}
\]
Assuming $-\frac{\pi}{2}<\theta_n<0$, we have
\[
0< \theta_n + {\pi \over 2} < {\pi \over 2}.
\]
Hence
\[
-\frac{1}{\lambda_n} = \tan \bigg( \theta_n + \frac{\pi}{2} \bigg) \geq \tan \bigg( \delta+\sum_{i\leq n-1}(\frac{\pi}{2}-\theta_i) \bigg) \geq \tan\delta + \sum_{i\leq n-1} \frac{1}{\lambda_i},
\]
and 4 follows. Property 5 is due to Yuan (Lemma 2.1 in \cite{Y1}).
\end{proof}
 The following lemma will be turn out to be key, as it allows us to transform $F$ into a concave operator.

\begin{lem}\label{Concave_G}
Let $f(\lambda) = \sum_i \arctan \lambda_i$ be defined on $\{\lambda \in \mathbb{R}^n : \sum_i \arctan \lambda_i \geq (n-2) {\pi \over 2} + \delta \}$. Then there exists $A$ large enough depending on $\delta$ such that $g(\lambda) =-e^{-A f(\lambda)}$ is a concave function.
\end{lem}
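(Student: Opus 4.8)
The plan is to compute the Hessian of $g$ directly in the eigenvalue variables $\lambda=(\lambda_1,\dots,\lambda_n)$ and reduce concavity to a single scalar inequality. Writing $f_i=\partial f/\partial\lambda_i=(1+\lambda_i^2)^{-1}$ and noting that the Hessian $D^2 f$ is diagonal with entries $f_{ii}=-2\lambda_i(1+\lambda_i^2)^{-2}$, one computes
\[
D^2 g = A\,e^{-Af}\big(D^2 f - A\,\nabla f\otimes\nabla f\big).
\]
Since $A\,e^{-Af}>0$ and the domain $\{\sum_i\arctan\lambda_i\ge (n-2)\tfrac{\pi}{2}+\delta\}$ is convex by Lemma~\ref{lem:cone_facts}(5), $g$ is concave iff $D^2 f - A\,\nabla f\otimes\nabla f\le 0$, i.e. for every $\xi\in\mathbb{R}^n$,
\[
\sum_i \frac{2\lambda_i}{(1+\lambda_i^2)^2}\,\xi_i^2 + A\Big(\sum_i \frac{\xi_i}{1+\lambda_i^2}\Big)^2 \ge 0 .
\]
Substituting $y_i=\xi_i/(1+\lambda_i^2)$, a linear bijection of $\mathbb{R}^n$, collapses this to the clean equivalent inequality
\[
2\sum_i \lambda_i y_i^2 + A\Big(\sum_i y_i\Big)^2 \ge 0 \qquad\text{for all } y\in\mathbb{R}^n .
\]

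If $\lambda_n\ge 0$ then all $\lambda_i\ge 0$ and the inequality is trivial, so assume $\lambda_n<0$. By Lemma~\ref{lem:cone_facts}(1), $\lambda_1,\dots,\lambda_{n-1}>0$; moreover $\lambda_n<0$ forces $(n-2)\tfrac{\pi}{2}+\delta<\sum_{i\le n-1}\arctan\lambda_i<(n-1)\tfrac{\pi}{2}$, so $\delta<\tfrac{\pi}{2}$, and Lemma~\ref{lem:cone_facts}(4) yields both $|\lambda_n|<\cot\delta$ and $\sum_{i\le n-1}\lambda_i^{-1}\le |\lambda_n|^{-1}-\tan\delta$. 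Now fix $y_1,\dots,y_{n-1}$, put $S=\sum_{i\le n-1}y_i$, and treat the left side as a quadratic in $y_n$ with leading coefficient $A+2\lambda_n$; choosing $A>2\cot\delta$ (hence $A>2|\lambda_n|$) makes it a convex parabola whose minimum over $y_n$ equals
\[
2\sum_{i\le n-1}\lambda_i y_i^2 \;-\; \frac{2A|\lambda_n|}{A-2|\lambda_n|}\,S^2 .
\]
So it suffices to prove $2\sum_{i\le n-1}\lambda_i y_i^2 \ge \frac{2A|\lambda_n|}{A-2|\lambda_n|}\big(\sum_{i\le n-1}y_i\big)^2$.

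To close, apply Cauchy-Schwarz with weights $\lambda_i^{-1}$: $\big(\sum_{i\le n-1}y_i\big)^2\le\big(\sum_{i\le n-1}\lambda_i^{-1}\big)\big(\sum_{i\le n-1}\lambda_i y_i^2\big)$, so it is enough that $\frac{A|\lambda_n|}{A-2|\lambda_n|}\sum_{i\le n-1}\lambda_i^{-1}\le 1$; inserting $\sum_{i\le n-1}\lambda_i^{-1}\le|\lambda_n|^{-1}-\tan\delta$ and simplifying, this reduces exactly to $A\tan\delta\ge 2$. Hence any $A\ge 2\cot\delta$ works. The one genuinely delicate point is this last step: the single negative term $2\lambda_n y_n^2$ can only be absorbed by using the quantitative lower bound on the positive eigenvalues encoded in Lemma~\ref{lem:cone_facts}(4); the naive estimate $\sum_{i\le n-1}\lambda_i^{-1}\le|\lambda_n|^{-1}$ loses precisely the factor that would otherwise force $A$ to depend on $\lambda$ rather than only on $\delta$. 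Everything else is elementary algebra.
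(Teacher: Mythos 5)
Your proof is correct, and up to the reduction it coincides with the paper's: both compute $D^2g = -Ae^{-Af}H$ with $H_{ij} = \frac{A+2\lambda_i\delta_{ij}}{(1+\lambda_i^2)(1+\lambda_j^2)}$, and your substitution $y_i=\xi_i/(1+\lambda_i^2)$ is exactly the diagonal congruence that reduces positivity of $H$ to positivity of $2\,\mathrm{diag}(\lambda)+A\,\mathbf{1}\mathbf{1}^{T}$. Where you genuinely diverge is in how that last matrix is shown to be positive (semi)definite: the paper uses Sylvester's criterion together with the determinant identity $\det(A+2\lambda_i\delta_{ij})=A2^{n-1}\sigma_{n-1}(\lambda)+2^{n}\sigma_n(\lambda)$ (proved by induction), which collapses to the sign of $A\sum_i\lambda_i^{-1}+2$; you instead minimize the quadratic form over $y_n$ and close with a weighted Cauchy--Schwarz inequality. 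Both routes hinge on the same key input, property (4) of Lemma~\ref{lem:cone_facts}, and both land on the identical threshold $A\tan\delta\geq 2$. Your version is arguably more elementary (no determinant induction) and has the virtue of producing the explicit admissible constant $A=2\cot\delta$; the paper's version yields the determinant formula itself, which is reused in the remark following Lemma~\ref{Concave_G} to show that no choice of $A$ works on a symmetric cone containing the supercritical set. Your correct observation that the naive bound $\sum_{i\le n-1}\lambda_i^{-1}\le|\lambda_n|^{-1}$ would not suffice is the precise analogue of the paper's need for $\sum_i\lambda_i^{-1}\le-\tan\delta$ rather than merely $\sum_i\lambda_i^{-1}<0$.
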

\begin{proof}
It suffices to show $\frac{\partial^2 g}{\partial\lambda_i\partial\lambda_j}$ is negative definite. We calculate as follows:
\[
\begin{aligned}
\frac{\partial g}{\partial \lambda_i}&=Ae^{-Af}\frac{1}{1+\lambda_i^2},\\
\frac{\partial^2 g}{\partial \lambda_i\partial\lambda_j}&=-Ae^{-Af}\bigg(\frac{A+2\lambda_i\delta_{ij}}{(1+\lambda_i^2)(1+\lambda_j^2)} \bigg).
\end{aligned}
\]
Define
\[
H_{ij} = \bigg(\frac{A+2\lambda_i\delta_{ij}}{(1+\lambda_i^2)(1+\lambda_j^2)} \bigg).
\]
We will show that all leading principal minors of $H_{ij}$ are positive. First, compute
\begin{equation} \label{det_pullout}
\det H_{ij} = \frac{1}{\Pi(1+\lambda_i^2)^2} \, \det(A+2\lambda_i\delta_{ij}).
\end{equation}
We claim that 
\begin{equation} \label{det_identity}
\det(A+2\lambda_i\delta_{ij})= A 2^{n-1}\sigma_{n-1}(\lambda) +2^n \sigma_n(\lambda),
\end{equation}
where $\sigma_k(\mu)$ is the $k$th elementary symmetric polynomial of $\mu \in \mathbb{R}^n$ if $k \leq n$, and zero if $k > n$. Identity (\ref{det_identity}) can be proved by induction. The case of a $1 \times 1$ matrix is trivial, so we assume the identity holds for a $(n-1) \times (n-1)$ matrix. We may write $\det(A+2\lambda_i\delta_{ij})$ as the sum
\[
\det \begin{pmatrix}
2\lambda_1 & A & \cdots&A\\
0 & A+2\lambda_2&\cdots&A\\
\vdots & \vdots& \ddots& \vdots\\
0 &A&\cdots& A+2\lambda_n
\end{pmatrix} +
 \det \begin{pmatrix}
A & A & \cdots&A\\
A & A+2\lambda_2&\cdots&A\\
\vdots & \vdots& \ddots& \vdots\\
A &A&\cdots& A+2\lambda_n
\end{pmatrix}.
\]
We use the induction hypothesis on the first determinant and row reduction on the second to obtain
\[
\det(A+2\lambda_i\delta_{ij}) = 2 \lambda_1 \{ 2^{n-2} A \sigma_{n-2}(\lambda|1) + 2^{n-1} \sigma_{n-1}(\lambda|1) \} + 2^{n-1}A \sigma_{n-1}(\lambda|1).
\]
Here we introduced the notation $\sigma_{k} (\lambda | i )$ to denote the $k$-th elementary function of $(\lambda | i) = (\lambda_1, \cdots, \widehat{\lambda_{i}}, \cdots, \lambda_n)\in \mathbb{R}^{n-1}$. By the identity
\[
\sigma_{k+1}(\lambda)= \lambda_1 \sigma_{k}(\lambda|1) + \sigma_{k+1}(\lambda|1),
\]
we obtain (\ref{det_identity}). 
\par
Without loss of generality we may assume that $ \la_1\geq \la_2 \geq \ldots \geq \la_n$. By Lemma~\ref{lem:cone_facts}, only the smallest eigenvalue $\lambda_n$ could be negative. From (\ref{det_pullout}) and (\ref{det_identity}), we see that all leading principal minors of $H_{ij}$ up to order $(n-1)$ are positive, and the full determinant is positive if $\lambda_n \geq 0$. If $\lambda_n < 0$, then $\sigma_n(\lambda) < 0$ and by (\ref{det_pullout}) and (\ref{det_identity}), we have
\[
\det H_{ij} = \frac{2^{n-1}\sigma_n(\lambda)}{\Pi(1+\lambda_i^2)^2}(A\sum\frac{1}{\lambda_i}+2). \]
By property 4 in Lemma~\ref{lem:cone_facts}, we know that $\sum\frac{1}{\lambda_i} \leq - \tan \delta$. By choosing $A$ large enough, we see that $H_{ij}$ is positive definite. Hence $g$ is a concave function.
\end{proof}

Note, that the constant $A$ cannot be chosen on any symmetric cone $\Gamma \subset \mathbb{R}^{n}$ containing the set $E:= \{\lambda \in \mathbb{R}^n : \sum_i \arctan \lambda_i \geq (n-2) {\pi \over 2} + \delta \}$.  To see this, suppose  $\Gamma \subset \mathbb{R}^{n}$ is a symmetric cone containing the set $E$.  We can find a point $\lambda := (\lambda_1,\ldots,\lambda_{n-1}, \lambda_n) \in E$ so that $\lambda_1 \geq \lambda_2 \geq \cdots \geq \lambda_{n-1} >0 > \lambda_n$.  We claim that $-e^{-Af}$ is not concave at $t\cdot \lambda$ for $t \gg0$.  Since  $\lambda_1,\ldots,\lambda_{n-1} >0$, arguing as in the proof of Lemma~\ref{Concave_G}, it suffices to evaluate the sign of the following determinant;
\[
\det\bigg(\frac{A+2t\lambda_i\delta_{ij}}{(1+t^2\lambda_i^2)(1+t^2\lambda_j^2)} \bigg)= \frac{2^{n-1}t^{n}\sigma_n(\lambda)}{\Pi(1+t^{2}\lambda_i^2)^2}\left(t^{-1}A\sum\frac{1}{\lambda_i}+2\right).
\]
By taking $t$ sufficiently large the right hand side of the above equation is clearly negative since $\sigma_n(\lambda)<0$.

An immediate consequence of Lemma~\ref{Concave_G} is
\begin{cor}\label{cor:Gconcave}
Let $\Gamma := \{ M \in {\rm Sym}(n) : F(M) \geq (n-2)\frac{\pi}{2} + \delta \}$.  Then there exists $A := A(\delta)$ so that the operator
\[
G = -e^{-AF}
\]
is elliptic and concave on $\Gamma$.
\end{cor}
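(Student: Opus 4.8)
The plan is to obtain the corollary by combining Lemma~\ref{Concave_G} with the classical fact, due to Davis, that a function of a symmetric matrix which depends only on the (unordered) eigenvalues is concave if and only if the corresponding symmetric function of the eigenvalue vector is concave on a symmetric convex set. Concretely, I would write $F(M) = f(\lambda(M))$, where $f(\lambda) = \sum_i \arctan \lambda_i$ is the function appearing in Lemma~\ref{Concave_G} and $\lambda(M) = (\lambda_1(M),\dots,\lambda_n(M))$ denotes the spectrum of $M \in {\rm Sym}(n)$; correspondingly $G(M) = g(\lambda(M))$ with $g(\lambda) = -e^{-Af(\lambda)}$.

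First I would record that the relevant domain is convex. Let $E := \{\lambda \in \mathbb{R}^n : \sum_i \arctan \lambda_i \geq (n-2)\frac{\pi}{2} + \delta\}$. By Lemma~\ref{lem:cone_facts}(5), $E$ is a closed convex subset of $\mathbb{R}^n$, and it is manifestly invariant under permutations of the coordinates; hence $\Gamma = \{M \in {\rm Sym}(n) : \lambda(M) \in E\}$ is a closed convex subset of ${\rm Sym}(n)$.

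Next, Lemma~\ref{Concave_G} furnishes $A = A(\delta)$ for which the Hessian of $g$ is negative definite throughout $E$ (the computation there gives $-D^2 g = A e^{-Af}\, H$ with $H$ positive definite on $E$); since $E$ is convex, $g$ is concave on $E$, and $g$ is a symmetric function of $\lambda$. By the theorem of Davis on spectrally-defined functions, $G = g \circ \lambda$ is therefore concave on $\Gamma$ (one may first work on the interior of $E$, where the hypotheses are cleanest, and then extend to $\Gamma$ by continuity). For ellipticity, I would differentiate $G = -e^{-AF}$: at a point of $\Gamma$, in an orthonormal frame diagonalizing $M$, the matrix $\partial G/\partial M_{ij}$ is diagonal with entries $\partial g/\partial\lambda_i = A e^{-Af}(1+\lambda_i^2)^{-1} > 0$, which is exactly the first computation in the proof of Lemma~\ref{Concave_G}; hence $\partial G/\partial M_{ij}$ is positive definite and $G$ is elliptic on $\Gamma$.

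Since the corollary is really just a repackaging of Lemma~\ref{Concave_G}, there is no serious obstacle; the only point demanding care is the correct invocation of the Davis-type result, in particular verifying that $E$ is both convex (this is Lemma~\ref{lem:cone_facts}(5)) and permutation-invariant, so that the lift of a concave symmetric function to ${\rm Sym}(n)$ remains concave. I would also note, for the reader, that concavity of $G$ on $\Gamma$ is entirely consistent with the remark following Lemma~\ref{Concave_G}: no uniform $A$ works on a symmetric cone containing $E$ precisely because $\Gamma$ is a proper subset (with eigenvalues bounded below) of any such cone.
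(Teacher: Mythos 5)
Your proposal is correct and is essentially the paper's own (implicit) argument: the paper declares Corollary~\ref{cor:Gconcave} an ``immediate consequence'' of Lemma~\ref{Concave_G}, and what you have written is exactly the standard justification of that step --- the convexity and permutation-invariance of the eigenvalue domain from Lemma~\ref{lem:cone_facts}(5), the negative-definiteness of $D^2 g$ from Lemma~\ref{Concave_G}, the Davis-type spectral lifting to ${\rm Sym}(n)$, and the positivity of $\partial g/\partial\lambda_i$ for ellipticity. No gaps; your remark reconciling this with the failure of concavity on larger symmetric cones is also accurate.
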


\section{The Dirichlet Problem}

\subsection{Properties of the Subsolution}
The following lemma, due to Sz\'ekelyhidi \cite{S1} in large generality, building on previous work of Guan \cite{G1}, will be needed in the a priori estimates. We use the standard notation $F^{ij} = {\p F \over \p u_{ij}}$. 
\begin{lem} \label{lem:sze}
{\rm (Sz\'ekelyhidi)} Suppose there exists a function $\underline{u}$ such that for each point $p \in \Omega$ and each index $i$ there holds
\begin{equation} \label{eq:C_subsoln}
\lim_{t \rightarrow \infty} f(\underline{\lambda}+te_i)> h(p),
\end{equation}
where $\underline{\lambda}$ are the eigenvalues of $D^2 \underline{u}(p)$, $e_i$ is the $i^{\mathrm{th}}$ standard basis vector, and $f(\lambda) = \sum \arctan \lambda_i$. Let $u$ satisfy $f(\lambda)=h$ with $h: \overline{\Omega} \rightarrow [(n-2){\pi \over 2} + \delta, \, n \frac{\pi}{2})$, and $\lambda$ the eigenvalues of $D^2 u$.
\par
There are constants $R_0, \kappa >0$ with the following property. If $|\lambda| \geq R_0$, then we either have
\[
\sum_{i,j} F^{ij}(D^2 u) [\underline{u}_{ij}-u_{ij}] > \kappa \sum_p F^{pp}(D^2 u),
\]
or $F^{ii}(D^2 u)> \kappa \sum_p F^{pp}(D^2 u)$ for each $i$.
\end{lem}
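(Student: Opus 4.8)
The plan is to follow the strategy of Sz\'ekelyhidi \cite{S1}, exploiting the concavity of $G = -e^{-AF}$ from Corollary~\ref{cor:Gconcave} rather than working directly with the nonconcave operator $F$. Since $G$ is concave on $\Gamma$ and has the same level sets as $F$, we have $G^{ij} = A e^{-AF} F^{ij}$, so $G^{ij}$ and $F^{ij}$ are positive multiples of one another, and the desired dichotomy for $F^{ij}$ is equivalent to the same dichotomy for $G^{ij}$. Working diagonally, write $\mu_1 \geq \cdots \geq \mu_n$ for the eigenvalues of $D^2 u(p)$ and let $G(\mu) = -e^{-Af(\mu)}$, with $f(\mu) = \sum_i \arctan\mu_i$. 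The first step is to observe that, by concavity of $G$,
\[
\sum_{i,j} G^{ij}(D^2u)[\underline{u}_{ij} - u_{ij}] \geq G(D^2 \underline{u}(p)) - G(D^2 u(p)) = G(\underline\lambda) - G(\mu),
\]
where we used that $G$ evaluated on a symmetric matrix depends only on eigenvalues and that concavity gives the tangent-plane inequality even after passing from matrices to eigenvalues (the standard argument: diagonalize, and use that for a concave symmetric function the matrix inequality follows). So it suffices to show: if $|\mu| \geq R_0$ then either $G(\underline\lambda) - G(\mu) \geq \kappa \sum_p G^{pp}$, or $G^{ii} \geq \kappa \sum_p G^{pp}$ for every $i$.

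The second step is to split into cases according to the size of the smallest eigenvalue $\mu_n$ relative to a large constant to be chosen. Fix a small $\tau > 0$ (depending on the gap between $\lim_{t\to\infty} f(\underline\lambda + te_i)$ and $\sup h$, which is positive by hypothesis~\eqref{eq:C_subsoln} and compactness of $\overline\Omega$).

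Case A: $\mu_n \geq -\tau^{-1}$, i.e.\ the smallest eigenvalue is bounded below by a fixed (possibly very negative) constant. Since $|\mu| \geq R_0$ with $R_0$ large, some $\mu_k$ is large; by Lemma~\ref{lem:cone_facts}(1) applied to $u$ (which satisfies the supercritical bound), $\mu_1 \geq \cdots \geq \mu_{n-1} > 0$, so in fact $\mu_1$ is large. Now $G^{ii} = A e^{-Af(\mu)} \frac{1}{1+\mu_i^2}$, so $G^{nn} = \max_i G^{ii}$ is the largest of these (since $|\mu_n| \leq \mu_{n-1} \leq \cdots \leq \mu_1$ by Lemma~\ref{lem:cone_facts}(1), so $1 + \mu_n^2$ is the smallest denominator). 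The point is that all the $\frac{1}{1+\mu_i^2}$ are comparable up to a factor controlled by $\frac{1+\mu_1^2}{1+\mu_n^2}$. If this ratio is bounded by $\kappa^{-1} n$ we land in the second alternative: $G^{ii} \geq \frac{1}{n}\cdot \frac{1+\mu_1^2}{1+\mu_i^2}\cdot\frac{1}{\kappa^{-1}}\sum_p G^{pp}$-type bound, after adjusting constants. Otherwise $\mu_1^2$ is much larger than $1 + \mu_n^2$; then one shows $G(\underline\lambda) - G(\mu)$ dominates $\sum_p G^{pp}$. Here one uses that $G$ is bounded (it is negative and $f \in [(n-2)\pi/2 + \delta, n\pi/2)$, so $G = -e^{-Af} \in [-e^{-A(n-2)\pi/2 - A\delta}, 0)$ — actually $G \to 0^-$), hence $|G(\underline\lambda) - G(\mu)|$ is at most a fixed constant, while $\sum_p G^{pp} = A e^{-Af(\mu)} \sum_i \frac{1}{1+\mu_i^2} \to 0$ as $f(\mu) \to n\pi/2$ and as the $\mu_i \to \infty$; so we must instead argue that $G(\underline\lambda) - G(\mu) > 0$ and compare rates. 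More precisely, since $f$ is increasing in each variable and $\mu$ is far from $\underline\lambda$ in the direction of a large eigenvalue, $f(\mu)$ need not exceed $f(\underline\lambda)$, so this comparison is delicate — this is where the hypothesis~\eqref{eq:C_subsoln} enters decisively.

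Case B: $\mu_n < -\tau^{-1}$ is impossible for $\tau^{-1} > C(\delta)$ by Lemma~\ref{lem:cone_facts}(3); so after enlarging $\tau^{-1}$ beyond $C(\delta)$, only Case A occurs. The main obstacle, and the part requiring the most care, is the sub-case of Case A where $\mu_1$ is huge: one must show directly that $\sum_{i,j} G^{ij}[\underline u_{ij} - u_{ij}] \geq \kappa \sum_p G^{pp}$ by not discarding the off-tangent-plane information, i.e.\ by using the strict version of concavity together with~\eqref{eq:C_subsoln}. The cleanest route, following Sz\'ekelyhidi, is: pick the index $i$ with $\mu_i$ large; set $B = D^2 u(p) + (t - \mu_i) \cdot (\text{projection onto the }i\text{-eigenline})$, let $t \to \infty$; by~\eqref{eq:C_subsoln} and continuity of eigenvalues, $f(B) \to$ a value $> h(p) = f(\mu) + (\text{something})$... in fact $f$ evaluated at the shifted matrix converges to $\lim_{s\to\infty} f(\underline\lambda + s e_i) > h(p)$ only if $B$ is built from $\underline\lambda$, not $\mu$. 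So the correct construction is: since $\mu_i$ is large, $f(D^2 u(p)) = h(p) < \lim_{s\to\infty} f(\underline\lambda + s e_i)$, and one compares $D^2 u(p)$ with $D^2\underline u(p) + s e_i \otimes e_i$ for appropriate $s$, using concavity of $G$ along the segment. The upshot is a lower bound $\sum G^{ij}[\underline u_{ij} - u_{ij}] \geq G\big(\lim_{s\to\infty}(\underline\lambda + s e_i)\big) - G(\mu) - \varepsilon \geq c(\tau) > 0$ while simultaneously $\sum_p G^{pp} \to 0$ as $|\mu| \to \infty$, giving the first alternative once $R_0$ is large enough. Thus the hard step is packaging the strict concavity / "the largest eigenvalue provides a definite gain" argument uniformly in $p$, which is exactly where the compactness of $\overline\Omega$ and the strict inequality in~\eqref{eq:C_subsoln} are used.
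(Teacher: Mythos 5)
The paper does not reprove this lemma: its proof is a two-line reduction to Sz\'ekelyhidi \cite{S1} (Proposition 6 and Remark 8), observing that the only structural input those arguments require is the convexity of the level sets $\Gamma^{h(x)}$, which is item (5) of Lemma~\ref{lem:cone_facts}; the concavified operator $G$ plays no role there. Your attempt to reconstruct the argument through $G=-e^{-AF}$ would be a legitimate alternative route if completed, but as written it has a genuine gap precisely at the decisive step.

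The gap is in the sub-case where the largest eigenvalue is huge and the first alternative must be produced. The inequality you assert,
\[
\sum_{i,j} G^{ij}[\underline{u}_{ij}-u_{ij}] \;\geq\; G\Bigl(\lim_{s\to\infty}(\underline{\lambda}+se_i)\Bigr)-G(\mu)-\varepsilon,
\]
does not follow from concavity: applying the tangent-plane inequality at the comparison matrix $D^2\underline{u}+s\,e_i\otimes e_i$ leaves an extra term $s\,G^{ii}$ on the left-hand side, which blows up as $s\to\infty$ rather than disappearing. In \cite{S1} this term is controlled because the index $i$ is chosen to be one at which the second alternative \emph{fails}, i.e.\ $G^{ii}\leq\kappa\sum_pG^{pp}$, so that $s\,G^{ii}$ can be absorbed for $s$ up to a definite, uniform size; your case split is instead on the ratio $(1+\mu_1^2)/(1+\mu_n^2)$, and you never establish the link between that ratio and the failure of the second alternative. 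There is also an unaddressed basis mismatch: the $e_i$ in \eqref{eq:C_subsoln} acts on the eigenvalues of $D^2\underline{u}$, while the tangent-plane inequality is written in a basis diagonalizing $D^2u$. Two further problems: the claim that $\sum_pG^{pp}\to0$ as $|\mu|\to\infty$ is false, since the smallest eigenvalue $\mu_n$ is bounded by Lemma~\ref{lem:cone_facts} and $\sup_{\overline{\Omega}}h<n\frac{\pi}{2}$, so $\sum_pG^{pp}\geq Ae^{-Ah}/(1+\mu_n^2)$ is bounded below by a positive constant; and the tangent-plane inequality at $D^2\underline{u}$ requires $f(\underline{\lambda})\geq(n-2)\frac{\pi}{2}+\delta$ so that $D^2\underline{u}$ lies in the domain of concavity of $G$, which the hypothesis \eqref{eq:C_subsoln} alone does not guarantee (it does hold in the paper's application, Corollary~\ref{cor:subsol}, where $\underline{u}$ is an honest subsolution, but not for the lemma as stated).
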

\begin{proof}
For any $x \in \Omega$, let $\Gamma^{h(x)} = \{ \mu \in \mathbb{R}^n : f(\mu) > h(x) \}$. By Lemma~\ref{lem:cone_facts}, $\Gamma^{h(x)}$ is convex. Given the convexity of the level sets $\Gamma^{h(x)}$, the arguments in \cite{S1} (Proposition 6, Remark 8) go through verbatim.
\end{proof}
\par
In our case, we have
\begin{cor}\label{cor:subsol}
Suppose $\underline{u}$ is a subsolution satisfying \eqref{sub_def}, and let $u$ satisfy \eqref{eq:DirProb} with $h: \overline{\Omega} \rightarrow [(n-2){\pi \over 2} + \delta, \, n \frac{\pi}{2})$. As before, let $\lambda$ denote the eigenvalues of $D^2 u$. There exists $R_0$ depending only on $\underline{u}$ and $\delta$, such that for any $|\lambda|\geq R_0$, we have
\[
F^{ij}(D^2 u) [\underline{u}_{ij}-u_{ij}] \geq \tau>0,
\]
where $\tau$ is a constant depending on only $\underline{u}$ and $\delta$.
\end{cor}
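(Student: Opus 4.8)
The plan is to deduce the estimate from Lemma~\ref{lem:sze}; the only real work is to exclude the second alternative of that lemma once $|\lambda|$ is large. First I would check that the subsolution $\underline{u}$ of \eqref{sub_def} satisfies the hypothesis \eqref{eq:C_subsoln}: for any $p\in\Omega$ and any index $i$, since $\arctan(\underline{\lambda}_i+t)\to\pi/2$ as $t\to\infty$,
\[
\lim_{t\to\infty} f(\underline{\lambda}+te_i)=\sum_{j\neq i}\arctan\underline{\lambda}_j+\frac{\pi}{2}=f(\underline{\lambda})+\Big(\frac{\pi}{2}-\arctan\underline{\lambda}_i\Big)\geq h(p)+\Big(\frac{\pi}{2}-\arctan\underline{\lambda}_i\Big)>h(p),
\]
because $\arctan\underline{\lambda}_i<\pi/2$. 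Hence Lemma~\ref{lem:sze} yields constants $R_0,\kappa>0$, depending only on $\underline{u}$ and $\delta$, such that whenever $|\lambda|\geq R_0$ either (a) $\sum_{i,j}F^{ij}(D^2u)[\underline{u}_{ij}-u_{ij}]>\kappa\sum_p F^{pp}(D^2u)$, or (b) $F^{ii}(D^2u)>\kappa\sum_p F^{pp}(D^2u)$ for every $i$.

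The key intermediate step is a uniform lower bound $\sum_p F^{pp}(D^2u)\geq c_1>0$ with $c_1=c_1(\underline{u},\delta)$. Working in a frame diagonalizing $D^2u$, $F^{pp}(D^2u)=(1+\lambda_p^2)^{-1}$. Since $\underline{u}\in C^4(\overline{\Omega})$ there is $\Lambda_0>0$ with $|D^2\underline{u}|\leq\Lambda_0$ on $\overline{\Omega}$, and the subsolution inequality forces $h(x)\leq f(\underline{\lambda})\leq n\arctan\Lambda_0$, so
\[
\sum_{i=1}^n\Big(\frac{\pi}{2}-\arctan\lambda_i\Big)=n\frac{\pi}{2}-h(x)\geq n\Big(\frac{\pi}{2}-\arctan\Lambda_0\Big).
\]
As every summand on the left is positive, some index $i_0$ satisfies $\arctan\lambda_{i_0}\leq\arctan\Lambda_0$, i.e.\ $\lambda_{i_0}\leq\Lambda_0$; combined with $\lambda_{i_0}\geq\lambda_n\geq-C(\delta)$ from Lemma~\ref{lem:cone_facts}, this gives $\lambda_{i_0}^2\leq\max(\Lambda_0^2,C(\delta)^2)$, whence $\sum_p F^{pp}(D^2u)\geq F^{i_0\,i_0}(D^2u)\geq c_1:=\big(1+\max(\Lambda_0^2,C(\delta)^2)\big)^{-1}$.

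The proof then closes quickly. In alternative (b), $(1+\lambda_i^2)^{-1}=F^{ii}(D^2u)>\kappa c_1$ for every $i$; this is impossible if $\kappa c_1\geq1$, and otherwise forces $|\lambda|<\sqrt{n}\,(\tfrac{1}{\kappa c_1}-1)^{1/2}$, a quantity depending only on $\underline{u}$ and $\delta$. Enlarging $R_0$ to exceed it, alternative (b) cannot occur once $|\lambda|\geq R_0$, so alternative (a) must hold, and together with the lower bound of the previous paragraph this gives $F^{ij}(D^2u)[\underline{u}_{ij}-u_{ij}]>\kappa c_1=:\tau>0$, with $R_0$ and $\tau$ depending on $\underline{u}$ and $\delta$ alone. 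The point requiring care is Step 2: one must produce a single eigenvalue of $D^2u$ that stays bounded, which uses both the upper bound on $h$ coming from the subsolution and the lower bound $\lambda_n\geq-C(\delta)$ from supercriticality (Lemma~\ref{lem:cone_facts}); everything else is bookkeeping of the $\underline{u},\delta$-dependence of the constants.
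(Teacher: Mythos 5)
Your proposal is correct and follows essentially the same route as the paper: verify the Sz\'ekelyhidi condition \eqref{eq:C_subsoln} for $\underline{u}$, use the boundedness of some eigenvalue of $D^2u$ (via Lemma~\ref{lem:cone_facts} and the upper bound on $h$ forced by the subsolution) to get a uniform lower bound on $\sum_p F^{pp}$, rule out the second alternative of Lemma~\ref{lem:sze} for $|\lambda|\geq R_0$, and conclude from the first alternative. Your bookkeeping is in fact slightly more explicit than the paper's about why the constants depend only on $\underline{u}$ and $\delta$ (namely, $\sup_{\overline\Omega} h \leq n\arctan\Lambda_0$ with $\Lambda_0 = \|D^2\underline{u}\|_{L^\infty}$), but this is a refinement, not a different argument.
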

\begin{proof}
Let $\underline{\lambda}$ be the eigenvalues of $D^2 \underline{u}(p)$. Then
\[
\lim_{t \rightarrow \infty} f(\underline{\lambda}+te_i) = \sum_{p \neq i} \arctan \underline{\lambda}_p + {\pi \over 2} > \sum_p \arctan \underline{\lambda}_p \geq h(p),
\]
which verifies \eqref{eq:C_subsoln}. Since the desired inequality is independent of coordinates, we choose coordinates such that $D^2 u$ is diagonal. It is well-known that in this case $F^{ij}=f_i \delta_{ij}$. 
\par
By Lemma~\ref{lem:cone_facts} and $\sup_{\overline{\Omega}} h <n \frac{\pi}{2}$, we have an estimate for the smallest eigenvalue $|\lambda_n|\leq C$. This allows us to rule out the case $F^{ii}(D^2 u)> \kappa \sum_p F^{pp}(D^2 u)$ for each $i$ in Lemma~\ref{lem:sze}.  Indeed, for $|\lambda| \geq R_0$ large enough, the largest eigenvalue $\lambda_1 \gg 1$ can be made arbitrarily large, and we have
\[
F^{11}={1 \over 1 + \lambda_1^2} \leq \kappa {1 \over 1 + \lambda_n^2} \leq \kappa \sum_p F^{pp},
\]
which rules out the second case of Lemma~\ref{lem:sze}. By the first case, we have
\[
F^{ij}(D^2 u) [\underline{u}_{ij}-u_{ij}] \geq \kappa {1 \over 1+\lambda_n^2} \geq \tau > 0,
\]
since $|\lambda_n|$ is bounded.
\end{proof}

\subsection{Zeroth and First Order Estimates}
From the existence of a subsolution it is straightforward to deduce two sided bounds for $u$.  Namely, we have
\begin{lem}\label{lem:c0 estimate}
Suppose $\underline{u}$ is a subsolution satisfying \eqref{sub_def}, and let $u$ satisfy \eqref{eq:DirProb}. Let $w:\Omega \rightarrow \mathbb{R}$ be the harmonic function defined by $\Delta w=0$ in $\Omega$, and $w|_{\del \Omega} = \phi$.  Then we have
\[
\underline{u} \leq u \leq w,
\]
and $\underline{u} = u = w=\phi$ on $\del \Omega$.
\end{lem}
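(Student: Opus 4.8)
The plan is to establish the two inequalities by separate maximum principle arguments; crucially, neither uses the concavity of $G$ from Corollary~\ref{cor:Gconcave}, only the (degenerate) ellipticity of the arctangent operator together with the supercritical phase hypothesis. The boundary identities are immediate: $\underline{u}|_{\del\Omega}=\phi$ by \eqref{sub_def}, $u|_{\del\Omega}=\phi$ by \eqref{eq:DirProb}, and $w|_{\del\Omega}=\phi$ by definition, so $\underline{u}=u=w=\phi$ on $\del\Omega$.

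\emph{The bound $u\le w$.} The first step is to observe that $u$ is subharmonic. Since $F(D^2u)=h(x)\ge (n-2)\tfrac{\pi}{2}+\delta$, property (2) of Lemma~\ref{lem:cone_facts}, applied to the eigenvalues $\lambda$ of $D^2u$, gives $\Delta u=\sum_i\lambda_i\ge 0$ in $\Omega$. Hence $\Delta(u-w)\ge 0$, and since $u-w=0$ on $\del\Omega$, the weak maximum principle for the Laplacian yields $u\le w$ throughout $\Omega$.

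\emph{The bound $\underline{u}\le u$.} Here I would linearize $F$ along the segment of symmetric matrices joining $D^2u$ to $D^2\underline{u}$. By the fundamental theorem of calculus,
\[
0\le F(D^2\underline{u})-F(D^2u)=a^{ij}(x)\,(\underline{u}-u)_{ij},\qquad a^{ij}(x):=\int_0^1 F^{ij}\!\bigl(tD^2\underline{u}(x)+(1-t)D^2u(x)\bigr)\,dt,
\]
where the inequality uses $F(D^2\underline{u})\ge h(x)=F(D^2u)$. The key point is that $F$ is elliptic at \emph{every} symmetric matrix, not just on the supercritical set: in diagonalizing coordinates $\partial f/\partial\lambda_i=(1+\lambda_i^2)^{-1}>0$ for all real $\lambda_i$, so $F^{ij}$ is positive definite everywhere. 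Consequently $(a^{ij}(x))$ is positive definite for each $x$, and since $u,\underline{u}\in C^2(\overline{\Omega})$ their Hessians are bounded on the compact set $\overline{\Omega}$, so $(a^{ij})$ is in fact uniformly elliptic with no zeroth order term. Thus $\underline{u}-u$ is a subsolution of the linear, uniformly elliptic operator $L:=a^{ij}\partial_i\partial_j$ and vanishes on $\del\Omega$; the weak maximum principle gives $\underline{u}-u\le 0$ in $\Omega$.

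\emph{On the difficulty.} There is no genuine obstacle here: the only point requiring attention is the well-definedness and uniform ellipticity of the averaged linearization $a^{ij}$, and this is automatic because the arctangent operator is (degenerate) elliptic on all of ${\rm Sym}(n)$ — in particular one never needs the intermediate matrices $tD^2\underline{u}+(1-t)D^2u$ to be admissible. The supercritical phase hypothesis enters only through the subharmonicity $\Delta u\ge 0$ used in the upper bound.
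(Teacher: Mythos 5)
Your proof is correct and follows essentially the same route as the paper, which disposes of both inequalities in one line: $\Delta u\ge 0$ from Lemma~\ref{lem:cone_facts} gives $u\le w$, and the comparison $\underline{u}\le u$ is exactly the maximum principle applied to the averaged linearization of $F$ that you write out. Your only addition is to make explicit the (correct and worth noting) observation that $F^{ij}$ is positive definite on all of ${\rm Sym}(n)$, so the intermediate matrices in the linearization need not be admissible.
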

\begin{proof}
By Lemma~\ref{lem:cone_facts} we know that $\Delta u >0$.  Thus the lemma follows from the maximum principle.
\end{proof}
Next, we derive an a priori gradient estimate.
\begin{prop} \label{prop:c1_est}
Suppose $\underline{u}$ is a $C^2(\overline{\Omega})$ subsolution satisfying \eqref{sub_def}, and let $u \in C^3(\overline{\Omega})$ satisfy \eqref{eq:DirProb} with $h: \overline{\Omega} \rightarrow [(n-2){\pi \over 2} + \delta, \, n \frac{\pi}{2})$. Then we have following estimate
\[
\sup_{\overline{\Omega}} |Du| \leq C(\Omega,\|\underline{u}\|_{C^1(\overline{\Omega})},\| h \|_{C^1(\overline{\Omega})},\delta).
\]
\end{prop}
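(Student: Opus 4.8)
The plan is to split the estimate into an interior bound and a boundary bound, then combine them by the maximum principle. For the boundary gradient estimate one uses barriers: on $\partial\Omega$ the tangential derivatives of $u$ are controlled by $\|\phi\|_{C^1}$ since $u=\phi$ there, so the content is the normal derivative. By Lemma~\ref{lem:c0 estimate} we already have the pinching $\underline u\le u\le w$ with equality on $\partial\Omega$, where $w$ is the harmonic extension of $\phi$; this immediately gives a one-sided barrier (the normal derivative of $u$ from the inside is bounded below by that of $w$ and above by that of $\underline u$, or vice versa depending on the orientation of the normal), hence $\sup_{\partial\Omega}|Du|\le C(\Omega,\|\underline u\|_{C^1},\|w\|_{C^1})$, and $\|w\|_{C^1(\overline\Omega)}$ is controlled by $\|\phi\|_{C^1(\partial\Omega)}$ and $\Omega$ via standard elliptic estimates for the Laplacian.

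For the interior estimate I would use the standard Bernstein-type argument. Since the operator $F$ is not concave, I work with the equivalent operator $G=-e^{-AF}$ from Corollary~\ref{cor:Gconcave}, which is elliptic and concave on $\Gamma$; because $G$ is a monotone function of $F$, any solution of \eqref{eq:DirProb} solves $G(D^2u)=-e^{-Ah(x)}=:\tilde h(x)$, and $\tilde h$ is still $C^1$ and bounded away from its extreme values in a way controlled by $\|h\|_{C^1}$ and $\delta$. Now consider the test function $\varphi=\log|Du|^2+\mu(|x|^2)$ or, more robustly, $\varphi=\eta(u-\underline u)+\log|Du|^2$ for a suitable one-variable function $\eta$; differentiate the equation $G^{ij}u_{ij}=\cdots$ once, contract with $u_k$, and estimate $G^{ij}\varphi_{ij}$ at an interior maximum of $\varphi$. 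The concavity of $G$ is exactly what kills the otherwise-uncontrolled term coming from differentiating $G^{ij}$ (via $G^{ij,kl}$), and the strict positivity coming from the subsolution — Corollary~\ref{cor:subsol}, giving $F^{ij}(\underline u_{ij}-u_{ij})\ge\tau>0$ once $|\lambda|\ge R_0$, equivalently a positive lower bound after multiplying by $Ae^{-AF}$ for $G^{ij}$ — provides the good negative term $-c\,|Du|^2\sum_p G^{pp}$ that dominates the error terms $C(1+|Du|)\sum_p G^{pp}$ from the $C^1$-dependence of $\tilde h$ and from $\|\underline u\|_{C^2}$. One also uses $G^{ij}$ positive definite together with the bound on the smallest eigenvalue $|\lambda_n|\le C$ (from Lemma~\ref{lem:cone_facts} and $h<n\pi/2$) so that $\sum_p G^{pp}$ is bounded below by a positive constant, ensuring the good term is genuinely effective. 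At the maximum point this forces $|Du|\le C$, and comparing with the boundary bound finishes the proof.

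The main obstacle I expect is the interior estimate, specifically making the Bernstein argument go through \emph{without} concavity of $F$ itself: one must commit to the reformulation via $G$, keep careful track of which quantities ($\tilde h$, $G^{ij}$, $G^{ij,kl}$) inherit which bounds from $h$ and $\delta$, and verify that the subsolution term survives the passage from $F^{ij}$ to $G^{ij}=Ae^{-AF}F^{ij}$ with a controlled (not merely positive) constant. A secondary technical point is that $G^{ij}$ need not have a uniform positive lower bound for its trace a priori on all of $\Gamma$, only once we also use the two-sided pinch on $h$; this is where Lemma~\ref{lem:cone_facts}(3) and the strict upper bound $h<n\pi/2$ are essential. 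The boundary part, by contrast, is essentially immediate from Lemma~\ref{lem:c0 estimate} plus interior estimates for the harmonic function $w$, so I would dispatch it quickly.
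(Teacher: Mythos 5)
Your boundary treatment is exactly the paper's: tangential derivatives come for free from $u=\phi$ on $\partial\Omega$, and the normal derivative is pinched between those of $\underline{u}$ and the harmonic extension $w$ via Lemma~\ref{lem:c0 estimate}. For the interior bound, however, the paper does something far simpler than your Bernstein scheme: it applies the maximum principle directly to $Q=\pm D_k u+\frac{B}{2}|x|^2$. Differentiating the equation \emph{once} gives $F^{ij}D_iD_jD_ku=D_kh$, which is bounded by $\|h\|_{C^1}$, while $F^{ij}D_iD_j(\frac{B}{2}|x|^2)=B\sum_i(1+\lambda_i^2)^{-1}\geq B(1+\lambda_n^2)^{-1}\geq Bc>0$ because $\lambda_n$ is bounded below by Lemma~\ref{lem:cone_facts} and above by $h<n\frac{\pi}{2}$. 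So for $B$ large, $LQ>0$, the maximum of $Q$ sits on $\partial\Omega$, and the boundary bound finishes the proof. Neither the concavity of $G$ nor the subsolution enters the interior argument at all.

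Two points in your interior sketch are misattributions rather than mere inefficiencies. First, the terms $G^{ij,kl}$ that you invoke concavity to control simply do not appear in a gradient estimate: one differentiates the equation only once, so only $G^{ij}u_{ijk}=\tilde h_k$ (or $F^{ij}u_{ijk}=h_k$) is needed, and no second derivative of the operator arises. Concavity is essential for the $C^2$ estimate (Proposition~\ref{prop:c2_int}), not here. Second, the subsolution inequality of Corollary~\ref{cor:subsol} yields $F^{ij}(\underline{u}_{ij}-u_{ij})\geq\tau$ with $\tau$ a \emph{constant}; it cannot produce a good term of size $-c|Du|^2\sum_p F^{pp}$ as you claim, and if your argument genuinely needed a term quadratic in $|Du|$ it would fail. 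Fortunately it does not: the bad terms in a $\log|Du|^2$ computation are only $O(1+|Du|^{-1})(1+\sum_p F^{pp})$, so the constant good term $B\sum_p F^{pp}$ from your $\mu(|x|^2)$ already suffices --- which is precisely why the paper's linear test function $\pm D_ku+\frac{B}{2}|x|^2$ works and renders the whole Bernstein machinery unnecessary. The one genuinely essential observation you did identify correctly is the uniform positive lower bound on $\sum_p F^{pp}$ coming from the two-sided bound on $\lambda_n$.
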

\begin{proof}
We apply the maximum principle to $Q = \pm D_k u + {B \over 2} |x|^2$, with $B>0$ to be chosen later and $k \in \{ 1, \dots, n \}$ fixed. Suppose that $Q$ attains its maximum at an interior point $x_0\in \Omega$. By an orthogonal transformation, we may assume that $D^2 u (x_0)$ is diagonal. It follows that $F^{ij}$ is diagonal at $x_0$, and $F^{ij} =\frac{1}{1+\lambda^2_i} \delta_{ij}$. By differentiating the equation,
\begin{equation} \label{eq:diff_once}
F^{ij} D_i D_j D_k u = D_k h.
\end{equation}
At $x_0$ we have
\[
F^{ij} D_i D_j \bigg( {B \over 2} |x|^2 \bigg) = B \sum_i {1 \over 1 + \lambda_i^2} \geq {B \over 1 + \lambda_n^2},
\]
where $\lambda_n$ is the smallest eigenvalue. By Lemma~\ref{lem:cone_facts}, $\lambda_n$ is bounded below, and since $h(x) < {n \pi \over 2}$ on $\overline{\Omega}$, it follows that $\lambda_n$ is bounded above. Hence for $B$ large enough, we have $F^{ij} D_i D_j Q (x_0) > 0$. This implies the maximum of $Q$ must be attained at the boundary. By Lemma~\ref{lem:c0 estimate}, the gradient of $u$ is bounded by the gradients of $\underline{u}$ and $w$ on the boundary $\p \Omega$. Hence the maximum of $Q$ is bounded on the boundary. We can therefore uniformly bound each component $D_k u$ of the gradient, which gives the $C^1$ estimate.

\end{proof}

\subsection{Second Order Estimate}
To obtain a $C^2$ estimate, we will make use of the concavity of the operator $G$ from Corollary~\ref{cor:Gconcave}. Recall that we defined
\[
G(D^2 u) = - e^{-A F(D^2 u)}.
\]
By Corollary~\ref{cor:Gconcave}, $G$ is concave for $A$ large enough depending only on $\delta = \inf_{\Omega}h-(n-2)\frac{\pi}{2} >0$. In other words, if we denote 
\[
G^{ij} = \frac{\del G}{\del u_{ij}}, \qquad G^{ij,k \ell} = \frac{\del^2 G}{ \del u_{ij} \del u_{k \ell}},
\]
then
\begin{equation}\label{G^{ijkl}}
G^{ij,k \ell} M_{ij} M_{k \ell} \leq 0,
\end{equation} 
for any symmetric matrix $M_{ij}$. Note that the Dirichlet problem~\eqref{eq:DirProb} is equivalent to following Dirichlet problem
\begin{equation}\label{eq:DiriG}
\begin{aligned}
G(D^2u)&=-e^{-Ah(x)} := \psi(x)\ {\rm in} \ \Omega,\\
u &= \phi \ \ \ {\rm on} \ \del \Omega.
\end{aligned}
\end{equation}
The proof of the $C^2$ estimate follows the lines of Guan \cite{G1,G2}.  Since our operator does not quite fit the structural conditions imposed in \cite{G1,G2} (see the discussion after Lemma~\ref{Concave_G}), and since some simplifications occur in our particular setting, we provide the complete argument.
\begin{prop} \label{prop:c2_int}
Suppose $\underline{u}$ is a $C^2(\overline{\Omega})$ subsolution satisfying \eqref{sub_def}, and let $u \in C^4(\overline{\Omega})$ satisfy \eqref{eq:DirProb} with $h: \overline{\Omega} \rightarrow [(n-2){\pi \over 2} + \delta, \, n \frac{\pi}{2})$. Then we have following estimate
\[
\sup_{\overline{\Omega}} |D^2u| \leq C(\Omega,\| h \|_{C^2(\overline{\Omega})},\delta)(1+\max_{\partial\Omega}|D^2u|).
\]
\end{prop}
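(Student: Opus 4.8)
The plan is to prove the global $C^2$ bound reduces to the boundary by applying the maximum principle to a test function of the form $W = \xi(|Du|^2)\eta(u - \underline{u}) D_{ee}u$, where $D_{ee}u = \langle D^2u \cdot e, e\rangle$ is the second derivative in a unit direction $e$, $\eta$ is an increasing function, and $\xi$ a decreasing function, both to be chosen. Since the largest eigenvalue $\lambda_1$ of $D^2u$ controls $|D^2u|$ (the other eigenvalues being bounded by $\lambda_1$ up to a bounded negative part, by Lemma~\ref{lem:cone_facts}), it suffices to bound $\max_{\overline\Omega}\sup_{|e|=1}(\eta(u-\underline{u})D_{ee}u)$. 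Suppose the max of $W$ over $\overline\Omega\times\{|e|=1\}$ occurs at an interior point $x_0$; rotate coordinates so that $D^2u(x_0)$ is diagonal with $\lambda_1\geq\cdots\geq\lambda_n$ and $e = e_1$, so we are bounding $\lambda_1$. Because $D_{11}u$ may not be the largest eigenvalue for a nearby perturbed frame, one follows the standard device (as in Guan) of working with the smooth function obtained by fixing the direction $e_1$ rather than taking an eigenvalue; the differentiation identities only pick up controllable error terms.

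The key computation is to differentiate the equation $G(D^2u) = \psi$ twice. Differentiating once gives $G^{ij}D_kD_iD_ju = D_k\psi$; differentiating twice in the direction $e_1$ gives
\[
G^{ij}D_1D_1D_iD_ju + G^{ij,k\ell}D_1u_{ij}D_1u_{k\ell} = D_1D_1\psi.
\]
By concavity of $G$ (Corollary~\ref{cor:Gconcave}, i.e. \eqref{G^{ijkl}}), the second term on the left is $\leq 0$, so $G^{ij}D_iD_j(D_{11}u) \geq D_1D_1\psi \geq -C\sum_p G^{pp}$ — this is precisely where the hidden concavity of $G$ is essential, and why we work with $G$ rather than $F$. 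Then one applies the linearized operator $L := G^{ij}D_iD_j$ to $\log W$ (equivalently to $\log\eta + \log\xi + \log D_{11}u$). The term $L(\log D_{11}u)$ is bounded below using the inequality above plus the concavity/Cauchy–Schwarz trick that handles the gradient terms $|D G^{pp}|$ (the standard $\sum_p G^{pp}|D_p D_{11}u|^2/(D_{11}u)^2$ versus $|D_1 D_{11}u|^2$ comparison). The term $L(\eta(u-\underline{u}))$ produces $\eta' G^{ij}(u_{ij}-\underline u_{ij}) + \eta'' G^{ij}D_i(u-\underline u)D_j(u-\underline u)$; here one invokes Corollary~\ref{cor:subsol}: when $|\lambda|\geq R_0$, $G^{ij}(u_{ij}-\underline u_{ij}) = A e^{-AF}F^{ij}(u_{ij}-\underline u_{ij})$ has a good negative sign, $\leq -A e^{-AF}\tau$ — wait, with the correct sign conventions, $F^{ij}(\underline u_{ij} - u_{ij})\geq\tau>0$, giving a term that dominates. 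The term $L(\log\xi(|Du|^2))$ is handled as in the gradient estimate, contributing a controlled amount using $G^{ij}D_iD_j(|Du|^2) = 2G^{ij}u_{ki}u_{kj} + 2G^{ij}u_ku_{kij}$ and the first-derivative identity.

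Combining, at the interior maximum $x_0$ one gets $0 \geq L(\log W) \geq c\,\lambda_1 \sum_p G^{pp} - C\sum_p G^{pp} - C$, provided the constants $A$ in $\eta, \xi$ are chosen large; since $\sum_p G^{pp} = Ae^{-AF}\sum_p \frac{1}{1+\lambda_p^2}\geq Ae^{-An\pi/2}\frac{1}{1+\lambda_n^2}$ is bounded below by a positive constant (using $|\lambda_n|\leq C$ from Lemma~\ref{lem:cone_facts} together with $h < n\pi/2$), we conclude $\lambda_1(x_0)\leq C$, hence $W(x_0)\leq C$, and therefore $W\leq C$ everywhere, giving the stated estimate with the boundary term on the right. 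The main obstacle I expect is the careful bookkeeping of the gradient-type error terms coming from differentiating $G$ (and from the rotation of the frame / non-smoothness of $\lambda_1$), and making sure the good negative term from the subsolution (Corollary~\ref{cor:subsol}) genuinely dominates in the regime $\lambda_1\gg 1$ while the $\xi$-factor keeps $|Du|$-terms under control — this is exactly the delicate part in Guan's method, compounded here by the fact that $G$ fails the usual structural hypotheses, so one must verify by hand that only the properties established in Lemma~\ref{lem:cone_facts}, Corollary~\ref{cor:Gconcave}, and Corollary~\ref{cor:subsol} are used.
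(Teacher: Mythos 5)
Your proposal imports the full Guan--Sz\'ekelyhidi machinery (test function $\xi(|Du|^2)\eta(u-\underline u)D_{ee}u$, logarithmic differentiation, the subsolution term), but as written it has a concrete gap: the final inequality $L(\log W)\geq c\,\lambda_1\sum_pG^{pp}-C\sum_pG^{pp}-C$ is asserted without a source for the good term $c\,\lambda_1\sum_pG^{pp}$. For the Lagrangian phase operator this term cannot come from where it usually does. The quadratic terms generated by the $\xi(|Du|^2)$ factor are $\sum_iG^{ii}\lambda_i^2=Ae^{-AF}\sum_i\lambda_i^2/(1+\lambda_i^2)\leq nAe^{-AF}$, i.e.\ \emph{bounded}, precisely because $F^{11}=1/(1+\lambda_1^2)$ decays quadratically in $\lambda_1$; and the subsolution term from Corollary~\ref{cor:subsol} yields only a bounded positive constant $\tau$, not anything proportional to $\lambda_1$. (There is also a sign slip: with $\eta$ increasing and argument $u-\underline u$, the term $\eta'\,G^{ij}(u_{ij}-\underline u_{ij})\leq-\eta'Ae^{-AF}\tau$ points the wrong way for the contradiction you want; one needs $\underline u-u$, or $\eta$ decreasing.) Extracting a $\lambda_1$-proportional good term for this operator requires \emph{not} discarding the concavity term $-G^{ij,rs}u_{ij1}u_{rs1}$ but playing it against the third-order terms, which is the hard part of the interior estimate in \cite{CJY}; none of that is needed here.

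Indeed, the estimate is far more elementary, and you already wrote down the fact that makes it so: $\sum_pG^{pp}=Ae^{-Ah}\sum_p(1+\lambda_p^2)^{-1}\geq Ae^{-Ah}(1+\lambda_n^2)^{-1}\geq\delta'>0$ uniformly, since $|\lambda_n|\leq C$ by Lemma~\ref{lem:cone_facts} together with $\sup h<n\frac{\pi}{2}$. The paper's proof applies the maximum principle directly to $\Delta u+\frac{B}{2}|x|^2$ (no subsolution, no $\eta$, no $\xi$, no largest eigenvalue): differentiating $G(D^2u)=\psi$ twice and summing gives $G^{ij}D_iD_j\Delta u\geq\Delta\psi\geq-C$ by concavity~\eqref{G^{ijkl}}, while $G^{ij}D_iD_j\frac{B}{2}|x|^2=B\sum_iG^{ii}\geq B\delta'$; choosing $B$ large forces the maximum of $\Delta u+\frac{B}{2}|x|^2$ to the boundary, and since $D^2u\geq-C$ the trace controls $|D^2u|$. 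Your argument, if the gaps above were repaired, would be a correct but vastly more complicated route; the uniform positivity of $\sum_pG^{pp}$ is exactly the structural feature that makes the heavy machinery unnecessary in the supercritical phase regime.
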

\begin{proof}
Consider the quantity $\Delta u + \frac{B}{2}|x|^{2}$; we claim that this quantity does not achieve an interior maximum, provided $B$ is chosen sufficiently large (but universal).  First, note that since $D^{2}u \geq -C$ by Lemma~\ref{lem:cone_facts}, the proposition follows from this claim.  
By differentiating the equation twice, we have
\[
G^{ij} D_i D_j \Delta u = \Delta \psi- \sum_k G^{ij,rs}u_{ijk}u_{rsk} \geq \Delta \psi
\]
by the concavity of $G$.  Furthermore, we have
\[
G^{ij} D_i D_j \frac{B}{2}|x|^{2} = B \sum_{i}G^{ii}.
\]
Fixing a point $x$ and performing an orthogonal transformation so that $D^2u(x)$ is diagonal, we have
\[
\sum_i G^{ii} = A e^{-Ah} \sum_{i}\frac{1}{1+\lambda_i^2} \geq \delta'
\]
for a universal constant $\delta'$.  Here we have used Lemma~\ref{lem:cone_facts} to deduce that the smallest eigenvalue of $D^2u$ is bounded in absolute value.  In particular, for $B$ sufficiently large, and universal, we have that $G^{ij} D_i D_j (\Delta u + \frac{B}{2}|x|^{2})>0$, and so the maximum is achieved on the boundary.
\end{proof}

The goal for the remainder of this section is to derive the $C^2$ estimate at the boundary.

\begin{prop}
Suppose $\underline{u}$ is a $C^4(\overline{\Omega})$ subsolution satisfying \eqref{sub_def}, and let $u \in C^3(\overline{\Omega})$ satisfy \eqref{eq:DirProb} with $h: \overline{\Omega} \rightarrow [(n-2){\pi \over 2} + \delta, \, n \frac{\pi}{2})$. Then we have following estimate
\[
\sup_{\partial \Omega} |D^2u| \leq C(\Omega,\|\underline{u}\|_{C^4(\overline{\Omega})}, \| h \|_{C^2(\overline{\Omega})},\delta).
\]
\end{prop}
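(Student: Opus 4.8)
The plan is to follow the boundary second-order estimate technique of Guan \cite{G1,G2} and Trudinger \cite{T1}, working throughout with the concave operator $G$ of Corollary~\ref{cor:Gconcave} and its linearization $\LL v := G^{ij}(D^2u)\,D_iD_j v$. Three features of $G$ do the work: $(a)$ concavity, which gives $\LL(u-\underline{u})\le G(D^2u)-G(D^2\underline{u})\le 0$ since $F(D^2\underline{u})\ge h=F(D^2u)$; $(b)$ a pinched trace $0<\delta'\le\sum_iG^{ii}\le C$, valid because $\sup_{\overline\Omega}h<n\tfrac{\pi}{2}$ and Lemma~\ref{lem:cone_facts} together bound the smallest eigenvalue of $D^2u$ in absolute value; and $(c)$ the strengthening $\LL(u-\underline{u})\le -c_0<0$ on the set $\{|D^2u|\ge R_0\}$, which is Corollary~\ref{cor:subsol} rewritten for $G$. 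We also record $\LL(D_ku)=D_k\psi$ (differentiate \eqref{eq:DiriG} once) and, to keep lower-order terms controlled, that the $G$-trace of $(D^2u)^2$ is bounded, $G^{ij}(u_{ik}u_{kj})=Ae^{-Ah}\sum_i\tfrac{\lambda_i^2}{1+\lambda_i^2}\le C$. Since $u=\underline{u}=\phi$ on $\del\Omega$, the subsolution $\underline{u}$ itself serves as an extension of the boundary data, which is why the constant depends only on $\|\underline{u}\|_{C^4}$ and not separately on $\phi$. Fix $x_0\in\del\Omega$ and choose coordinates centered at $x_0$ with $e_n$ the interior unit normal and $\del\Omega=\{x_n=\rho(x')\}$, $\rho(0)=0$, $D\rho(0)=0$; it remains to bound $|u_{\alpha\beta}(x_0)|$, $|u_{\alpha n}(x_0)|$ for $\alpha,\beta<n$, and $|u_{nn}(x_0)|$.

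\textbf{Tangential-tangential and tangential-normal.} For $\alpha,\beta<n$, differentiating $(u-\underline{u})|_{\del\Omega}\equiv 0$ twice along $\del\Omega$ identifies the tangential block of $D^2(u-\underline{u})$ at $x_0$ with $D_n(u-\underline{u})(x_0)$ times the second fundamental form of $\del\Omega$; since $|D(u-\underline{u})(x_0)|\le C$ by Proposition~\ref{prop:c1_est}, this gives $|u_{\alpha\beta}(x_0)|\le C$. For the mixed derivatives one takes a local frame $T_1,\dots,T_{n-1}$ tangent to the level sets of $\mathrm{dist}(\cdot,\del\Omega)$ with $T_\alpha(x_0)=\del_\alpha$, so that $T_\alpha(u-\underline{u})\equiv 0$ on $\del\Omega$, and uses a barrier of Guan's type,
\[
\Phi = A_1(u-\underline{u}) + A_2|x-x_0|^2 - A_3\sum_{l<n}\big|D_l(u-\underline{u})\big|^2 ,
\]
on a half-ball $\Omega\cap B_\eta(x_0)$. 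Using $\LL|x-x_0|^2=2\sum_iG^{ii}\ge 2\delta'$, the positivity of $G^{ij}D_{il}(u-\underline{u})D_{jl}(u-\underline{u})$ inside $\LL|D_l(u-\underline{u})|^2$, the boundedness of $\LL(T_\alpha(u-\underline{u}))$ (from $(b)$ and the bounded $G$-trace of $(D^2u)^2$), and $(a)$--$(c)$, one fixes the parameters in the order $A_3,A_1,A_2$ so that $\LL(\Phi\pm T_\alpha(u-\underline{u}))\le 0$ on $\Omega\cap B_\eta$. On $\del\Omega\cap B_\eta$ one has $T_\alpha(u-\underline{u})=0$ and $|D_l(u-\underline{u})|\le C|x-x_0|$, on $\Omega\cap\del B_\eta$ the $A_2$-term dominates, and $\Phi\pm T_\alpha(u-\underline{u})$ vanishes at $x_0$; the maximum principle then gives $\Phi\pm T_\alpha(u-\underline{u})\ge 0$ on the half-ball, and differentiating in the interior normal at $x_0$ (where $D_l(u-\underline{u})(x_0)=0$ for $l<n$) yields $|u_{\alpha n}(x_0)|\le C$.

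\textbf{Double normal.} The two preceding steps control every entry of $D^2u(x_0)$ except $u_{nn}(x_0)$, and $\Delta u(x_0)\ge 0$ (Lemma~\ref{lem:cone_facts}) gives $u_{nn}(x_0)\ge -C$, so only an upper bound for $u_{nn}(x_0)$ remains. This is the heart of the estimate, and the point where the subsolution replaces the convexity hypothesis on $\Omega$ used in \cite{CNS3}. Writing $D^2u(x_0)$ in block form with tangential block $A'$, Cauchy interlacing and $\arctan\le\tfrac{\pi}{2}$ give $F(D^2u(x_0))\le\tfrac{\pi}{2}+\widehat{F}(A')$ with $\widehat{F}(A'):=\sum_j\arctan\mu_j(A')$, and moreover $\tfrac{\pi}{2}+\widehat{F}(A')-F(D^2u(x_0))\sim c\,u_{nn}(x_0)^{-1}$ as $u_{nn}(x_0)\to\infty$ (the tangential and mixed parts being held bounded). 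Since $F(D^2u(x_0))=h(x_0)$, a bound on $u_{nn}(x_0)$ is equivalent to the strict inequality $\tfrac{\pi}{2}+\widehat{F}(A'[u](x_0))\ge h(x_0)+c_1$ for a fixed $c_1>0$. The subsolution gives $\tfrac{\pi}{2}+\widehat{F}(A'[\underline{u}](x_0))\ge F(D^2\underline{u}(x_0))\ge h(x_0)$, but $A'[u](x_0)$ differs from $A'[\underline{u}](x_0)$ by a bounded --- not small --- amount, so to upgrade this one runs a further barrier argument on $\Omega\cap B_\eta(x_0)$, comparing $u$ against a function assembled from $\underline{u}$, a defining function of $\del\Omega$, and the already-controlled tangential and mixed second derivatives, and invoking the convexity of the level sets $\Gamma^{h(x)}$ (part (5) of Lemma~\ref{lem:cone_facts}) together with $(a)$--$(c)$. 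I expect this double-normal step to be the main obstacle: it cannot be read off from the equation at $x_0$ alone --- the level hypersurface $\del\Gamma^{h(x_0)}$ is non-compact, containing matrices with arbitrarily large top eigenvalue --- so it must exploit the global information carried by the subsolution, and the barrier there is delicate precisely because $G^{ij}$, though of pinched trace, degenerates in the eigendirection of the largest eigenvalue of $D^2u$, which is exactly the direction being controlled. Combining the three steps yields $\sup_{\del\Omega}|D^2u|\le C(\Omega,\|\underline{u}\|_{C^4(\overline\Omega)},\|h\|_{C^2(\overline\Omega)},\delta)$.
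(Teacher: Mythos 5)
Your decomposition (tangential block from the boundary identity, mixed derivatives via a Guan-type barrier with a tangential vector field, double normal via the subsolution) is the same as the paper's, and the first step plus the reduction of the double-normal bound to a strict inequality for the reduced operator on the tangential block are both correct. Two points, one minor and one substantive.

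First, your mixed-derivative barrier $\Phi = A_1(u-\underline{u}) + A_2|x-x_0|^2 - A_3\sum_{l<n}|D_l(u-\underline{u})|^2$ has no term that is strictly negative under $\LL$ in the regime where $|D^2u|\le R_0$. There, Corollary~\ref{cor:subsol} gives only $\LL(u-\underline{u})\le 0$ (possibly $=0$), the quadratic gradient term contributes a nonpositive piece that can vanish, and $\LL(A_2|x-x_0|^2)=2A_2\sum_iG^{ii}>0$ with $A_2$ forced to be large by the boundary comparison; so $\LL(\Phi\pm T_\alpha(u-\underline{u}))\le 0$ cannot be arranged. The standard cure, which the paper uses in Lemma~\ref{Lv_lemma}, is to replace $u-\underline{u}$ by $v=(u-\underline{u})+td-Nd^2$: the term $-2N\,G^{ij}D_id\,D_jd$ supplies the missing strict negativity exactly when the eigenvalues are bounded, i.e.\ when the operator is uniformly elliptic. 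This is fixable but, as written, your barrier fails.

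Second, and more seriously, the double-normal step is not actually proved. You correctly reduce the problem to showing $\frac{\pi}{2}+\widehat{F}(A'[u](x))\ge h(x)+c_1$ uniformly on $\del\Omega$, correctly observe that the pointwise subsolution inequality is off by a bounded (not small) amount, and then defer to ``a further barrier argument'' whose ingredients you do not specify --- indeed you flag it yourself as the expected obstacle. This is precisely where the paper's proof has content: one applies Trudinger's device of taking the \emph{minimum} over $\del\Omega$ of $\tilde{G}(u_{\alpha\beta})-\psi$, so that it suffices to bound $u_{nn}$ at that single point (Lemma~\ref{lem:bdC2main} plus the Caffarelli--Nirenberg--Spruck eigenvalue asymptotics, Lemma~\ref{lem:CNSLem}); the barrier at that point is $\Phi=-(u-\underline{u})_n+\frac{1}{\eta}\tilde{G}_0^{\alpha\beta}(\underline{u}_{\alpha\beta}(x)-u_{\alpha\beta}(0))-\frac{\psi(x)-\psi(0)}{\eta}$, whose nonnegativity on $\del\Omega$ uses the minimality of the base point together with the concavity of the reduced operator $\tilde{G}$ (Corollary~\ref{cor:tGconcave}, which in turn rests on the Schur--Horn theorem via Lemma~\ref{lem:n-3}), and whose usefulness rests on the boundary subsolution inequality $\tilde{G}_0^{\alpha\beta}(\underline{u}_{\alpha\beta}-u_{\alpha\beta})(0)\ge\tau>0$ of Lemma~\ref{G_0_tau_lemma}, itself proved by combining the Schur--Horn inequality for $\underline{u}$ with the CNS asymptotics for $u$. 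None of these ideas appear in your proposal, so the hardest third of the estimate is a genuine gap rather than a sketch.
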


At any point $x_0\in\partial\Omega$, choose coordinates $x_1,x_2,\dots,x_n$ with origin at $x_0$ such that the positive $x_n$ axis is in the direction of the interior normal of $\partial\Omega$ at $0$.  Denote $x'=(x_1,x_2,\dots,x_{n-1})$. Near $0$, we may represent $\partial \Omega$ as a graph which satisfies
\begin{equation} \label{rho}
x_n=\rho(x')=\frac{1}{2}\sum_{\alpha,\beta<n}\rho_{\alpha\beta}(0)x_{\alpha}x_{\beta}+O(|x'|^3).
\end{equation}
Since
\[
(u-\underline{u})(x',\rho(x'))=0,
\]
we have
\[ \label{bdd_id_1}
(u-\underline{u})_{x_{\alpha}x_{\beta}}(0)=-(u-\underline{u})_{x_n}(0)\rho_{\alpha\beta}(0), \ \ \text{ for}\ \alpha,\beta<n.
\]
From the boundary gradient estimate it follows that
\[
|u_{x_\alpha x_{\beta}}(0)| \leq C, \ \ \alpha,\beta<n.
\]
Next we will estimate the mixed normal-tangential derivatives, $u_{x_{\alpha}x_n}(0)$ for $\alpha < n$.  For this we will use a barrier argument exploiting the barrier function from \cite{G3},
\[
v = (u-\underline{u})+t d - N d^2,
\]
where $d(x) = d(x,\del \Omega)$ is the distance function to the boundary. We denote $L = F^{ij} D_i D_j$.  We need the following lemma:

\begin{lem} \label{Lv_lemma}
For $\delta'$ small enough, there exists $\epsilon_1>0$ depending on $\underline{u}$, $h$, $\Omega$ such that
\[
\begin{aligned}
L v&\leq -\epsilon_1, \ \ \text{ inside } \ \Omega \cap B_{\delta'}(0), \\
v&\geq 0, \ \ \text{ on} \ \partial (\Omega  \cap B_{\delta'}(0)).
\end{aligned}
\]
\end{lem}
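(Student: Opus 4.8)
\textbf{Proof plan for Lemma~\ref{Lv_lemma}.}

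The plan is to verify the two assertions separately, starting with the boundary inequality since it is elementary, and then focusing on the differential inequality $Lv \leq -\epsilon_1$, which is the heart of the matter. For the boundary statement, note that $v = (u-\uu) + td - Nd^2$ vanishes on $\partial\Omega \cap B_{\delta'}(0)$ because $u = \uu = \phi$ there and $d = 0$; on the spherical part $\Omega \cap \partial B_{\delta'}(0)$, the term $td$ is bounded below away from zero by $t\,c(\delta')$ while $(u-\uu) - Nd^2$ is controlled by the $C^0$ estimate and $d \leq \delta'$, so choosing $t$ large (depending on $N$, $\delta'$) forces $v \geq 0$ there. So $N$ and $\delta'$ are fixed first, then $t$ is chosen last and large.

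For the differential inequality, I would compute $Lv = L(u-\uu) + tLd - NL(d^2)$ term by term, where $L = F^{ij}D_iD_j$ with $F^{ij}$ evaluated at $D^2u$. The term $L(u-\uu) = h - F^{ij}\uu_{ij}$; the key point is that when the Hessian of $u$ is large at a point, Corollary~\ref{cor:subsol} gives $F^{ij}(\uu_{ij} - u_{ij}) \geq \tau > 0$, i.e. $L(u-\uu) = F^{ij}\uu_{ij} - F^{ij}u_{ij}$... wait, more precisely $F^{ij}u_{ij} = \sum \lambda_i/(1+\lambda_i^2)$ which is bounded, and $F^{ij}\uu_{ij} \leq F^{ij}u_{ij} - \tau$ when $|\lambda| \geq R_0$, giving $L(u-\uu) \leq -\tau + C$. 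Hmm, the bookkeeping is: on the region where $|D^2u| \geq R_0$ we get a good negative contribution $\leq -\tau$ (after absorbing bounded terms into the constant, or rather one should be careful), while on the region $|D^2 u| \leq R_0$ the operator $F^{ij}$ is uniformly elliptic with eigenvalues bounded above and below, and $L(u-\uu)$ is merely bounded. For $L(d^2) = 2d\,Ld + 2F^{ij}d_i d_j$: since $|\nabla d| = 1$, $F^{ij}d_id_j \geq (\min_i F^{ii}) > 0$ is bounded below, and near the boundary $d$ is small and $C^2$, so $-NL(d^2) \leq -2N F^{ij}d_id_j + CNd \leq -2N\mathcal{F}_{\min} + CN\delta'$ where $\mathcal{F}_{\min}$ is a lower bound on the smallest eigenvalue of $(F^{ij})$. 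The term $F^{ij}d_id_j$ is controlled below by $1/(1+\lambda_1^2)$ where $\lambda_1$ is the largest eigenvalue of $D^2 u$ — and here is the subtlety: this lower bound degenerates when $\lambda_1 \to \infty$. So one must balance: on the region where $|D^2u|$ is large, $F^{ij}d_id_j$ may be tiny but $L(u-\uu) \leq -\tau$ saves us; on the region where $|D^2u| \leq R_0$, $F^{ij}d_id_j$ is bounded below by a universal constant and we choose $N$ large enough that $-2NF^{ij}d_id_j$ dominates all the bounded terms. Finally $tLd$ is bounded by $tC$ in absolute value since $d$ is $C^2$ and $\sum F^{ii}$ is bounded; after $N$ is fixed we shrink $\delta'$ if needed and then pick $t$, checking that the $tLd$ contribution does not destroy the negativity — this works because $|Ld| \leq C\sum F^{ii} \leq C$ uniformly, so $tC$ is a fixed (if large) constant, and we just need the other negative terms to beat it, which on the bounded-Hessian region requires $N \gg t$, so the order of choices is: fix $\delta'$ small, then $N$ large depending on $t$...

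Actually the cleaner ordering, which I would adopt, is: first choose $N$ large (depending only on $\uu$, $h$, $\Omega$, $\delta$) so that on $\{|D^2u| \leq R_0\}$ the inequality $Lv \leq -2\epsilon_1$ holds for \emph{any} $t$ in a bounded range and $\delta'$ small; this uses uniform ellipticity on that region. On $\{|D^2u| \geq R_0\}$, Corollary~\ref{cor:subsol} gives $L(u-\uu) \leq -\tau$ plus we must show the remaining terms $tLd - NL(d^2)$ don't overwhelm $-\tau$; since $|Ld| \leq \sum F^{ii} \leq C$ and $|L(d^2)| \leq Cd\sum F^{ii} + 2 F^{ij}d_id_j \leq C\delta' + 2\sum F^{ii} \leq C$, these are bounded by $C(t + N)$, which is \emph{not} small — so this ordering fails too unless we exploit something more. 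The genuine fix, following Guan, is that on $\{|D^2u|\geq R_0\}$ one does not use $-NL(d^2)$ to gain anything but rather just needs $L(u-\uu) \leq -\tau$ to beat the \emph{positive} part of $tLd - NL(d^2)$; and crucially $-NF^{ij}d_id_j \leq 0$ always helps (never hurts), while $Ld$ and $d\,Ld$ terms are $O((t+N)\sum F^{ii})$ with $\sum F^{ii} \to 0$ is false, $\sum F^{ii}$ is only bounded above... I'd resolve this by taking $\delta'$ small so that on $B_{\delta'}$ we have better control, and accepting $\epsilon_1$ depends on the final $N,t$.

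\textbf{Main obstacle.} The delicate point is the competition on the region where $D^2 u$ has a large eigenvalue: there $F^{ij}d_id_j$ can be as small as $(1+\lambda_1^2)^{-1}$, so the positive-definiteness gain from $-NL(d^2)$ is lost, and one must instead extract the decisive negative sign entirely from $L(u-\uu) \leq -\tau$ via Corollary~\ref{cor:subsol}, while simultaneously ensuring the auxiliary terms $tLd$ and $-NL(d^2)$ — whose size scales with $N$ and $t$ — are dominated. I expect the resolution to require fixing the constants in a careful order (first $\delta'$, then $N$, then $t$), using that on $B_{\delta'}(0)$ the distance function $d$ is $C^2$ with $d \leq \delta'$ small so that the $d\,Ld$ cross term in $L(d^2)$ is negligible, and keeping only $-2NF^{ij}d_id_j$ as the effective use of the $-Nd^2$ barrier on the bounded-Hessian region. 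This is precisely the structural subtlety flagged in the remarks after Lemma~\ref{Concave_G} about $F$ not fitting Guan's framework, so I would expect the proof to mirror Guan's barrier construction in \cite{G3} with the eigenvalue bounds from Lemma~\ref{lem:cone_facts} (in particular the two-sided bound on $\lambda_n$, since $h < n\pi/2$) doing the extra work.
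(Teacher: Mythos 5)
There is a genuine gap, and it is concentrated in exactly the place you flag as delicate: the choice and ordering of the constants $t$, $N$, $\delta'$. Your boundary argument rests on the claim that $td$ is bounded below by $t\,c(\delta')>0$ on the spherical part $\Omega\cap\partial B_{\delta'}(0)$; this is false, since that sphere meets $\partial\Omega$ and hence contains points with $d$ arbitrarily small. From this false premise you conclude that $t$ must be chosen \emph{large} (and last), and that choice is what makes the interior inequality irresolvable for you: in the large-Hessian region the only negative term available is $-\tau$ from Corollary~\ref{cor:subsol}, and it must absorb $+t\,F^{ij}D_iD_jd=O(tC)$, which forces $t$ to be \emph{small}. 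You notice this tension twice and each time abandon the ordering without fixing it, ending with ``accepting $\epsilon_1$ depends on the final $N,t$,'' which does not close the argument.

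The resolution in the paper is that the boundary inequality does not need $td$ to dominate anything of fixed size. One has $u\geq\underline{u}$ on all of $\Omega$ by the comparison principle (Lemma~\ref{lem:c0 estimate}), so $v=(u-\underline{u})+(t-Nd)d\geq 0$ as soon as $t\geq Nd$, which on $\Omega\cap B_{\delta'}(0)$ follows from $t>N\delta'$ --- a condition on $\delta'$ being small, not on $t$ being large. The constants are then fixed in the order: $N$ large (so that $-2N F^{ij}D_idD_jd\leq -2N/(1+R_0^2)$ beats the bounded terms on the region $|\lambda|\leq R_0$, where you correctly note $F^{ij}$ is uniformly elliptic), then $t$ small with $tC\leq\tau/4$ (so that $-\tau$ survives on the region $|\lambda|>R_0$, where $-2NF^{ij}D_idD_jd\leq 0$ is simply discarded), and finally $\delta'\ll t/(NC)$, which simultaneously makes the cross term $2Nd\,F^{ij}D_iD_jd=O(NC\delta')$ negligible and guarantees $N\delta'<t$ for the boundary inequality. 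Your decomposition of $Lv$ and your use of Corollary~\ref{cor:subsol} on the two regions are exactly right; the missing step is realizing that $t$ is a small parameter and $\delta'$ is chosen last.
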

\begin{proof}
First, we calculate inside $\Omega \cap B_{\delta'}(0)$,
\[
Lv = F^{ij} (u_{ij}-\underline{u}_{ij}) +t F^{ij} D_iD_j d  -2N d F^{ij} D_iD_j d-2N F^{ij}D_idD_jd.
\]
We consider two cases.  First, assume that $|\lambda |\leq R_0$, where $R_0$ is the constant from Corollary~\ref{cor:subsol}.  Since $d$ is the distance function, we know that $|\nabla d|=1$, and so we have
\[
F^{ij} D_id D_jd \geq \frac{1}{1+R_0^2}.
\]
Since $\del \Omega$ is a smooth hypersurface, we can assume that the distance function $d$ is smooth in $B_{\delta'}(0)\cap \del\Omega$,  \cite[Chapter 14]{GT}, and so
\[
\bigg| F^{ij}( u_{ij}- \underline{u}_{ij}) \bigg|\leq C, \ \ \bigg| F^{ij} D_i D_j d \bigg|\leq C, \ \ \bigg| d F^{ij} D_i D_j d \bigg|\leq C\delta'.
\]
Putting everything together we have
\[
Lv \leq C+tC+2NC\delta'-\frac{2N}{1+R_0^2}.
\]
Consider now the case when $\lambda > R_0$. By Corollary~\ref{cor:subsol}, we have 
\[
F^{ij}( u_{ij}- \underline{u}_{ij}) \leq -\tau.
\]
Hence
\[
Lv\leq -\tau+tC+2NC\delta'-2N F^{ij} D_i d D_j d.
\]
We fix constants as follows:
\[
N \gg C, \ \ t \ll \frac{\tau}{4C} \ \ \delta' \ll \frac{t}{NC}.
\]
Thus in either case we obtain
\[
L v\leq -\epsilon_1, \ \ \text{inside} \ \Omega \cap B_{\delta'}(0).
\]
Also we have 
\[
v\geq 0, \ \ {\mathrm on} \ \partial (\Omega  \cap B_{\delta'}(0)),
\]
since $v=(u-\underline{u})+(t-Nd)d$ and $t>N\delta'>Nd$. This proves the lemma.
\end{proof}

With this lemma in hand we can now estimate $u_{x_{\alpha}x_n}(0)$. For $\alpha \in \{1, \dots, n-1 \}$, define
\[
T_\alpha = \frac{\partial}{\partial x_{\alpha}}+ \sum_{\beta<n} \rho_{\alpha\beta}(0) \, \bigg( x_\beta \frac{\partial}{\partial x_n} - x_n \frac{\partial}{\partial x_\beta} \bigg).
\]
The vector field $T_\alpha$ is an approximate tangential operator on $\del \Omega$. Indeed, on $\del \Omega$, the operator $\frac{\partial}{\partial x_{\alpha}}+ \del_\alpha \rho \, \frac{\partial}{\partial x_n}$ is a tangential operator, and hence using (\ref{rho}), on $\del \Omega$ near $0$ we have
\[ \label{T_approx}
T_\alpha = [ \del_\alpha  + \del_\alpha \rho \, \del_n  ] + O(|x'|^2)\del_n -\sum_{\beta<n}\rho_{\alpha\beta}(0)\rho(x')\del_{\beta}
\]
on $\del \Omega$ near $0$.  Since $u=\underline{u}$ on $\del \Omega$, the boundary gradient estimate implies
\begin{equation}\label{T_approx}
\limsup_{(x',x_n)\rightarrow (x',\rho(x'))}|T_{\alpha}(u-\underline{u})| \leq C|x'|^{2}
\end{equation}
for a universal constant $C$.

The advantage of working with $T_\alpha$ is that the vector field $X_\beta = x_\beta \frac{\partial}{\partial x_n} - x_n \frac{\partial}{\partial x_\beta}$ generates a rotation. Since $F(D^2 u)$ only depends on the eigenvalues of the Hessian of $u$, it is invariant under rotations of coordinates. It follows that applying the vector field $X_\beta$ to the equation $F(D^2u) = h$ gives $X_\beta h = F^{ij} D_i D_j (X_\beta u)$, from which it follows that $T_\alpha h = L (T_\alpha u)$. Thus 

\begin{equation}\label{eq:LTest}
|L \, T_\alpha (u-\underline{u})|\leq C(1 + \sum F^{ii}) \leq C, \quad \text{ in }\Omega \cap B_{\delta'}(0).
\end{equation}
Choosing $\delta'$ as in Lemma \ref{Lv_lemma}, we can choose constants $A \gg B \gg 1$ large enough so that
\[
\begin{aligned}
L(Av+B|x|^2\pm T_\alpha(u-\underline{u}))&< 0, \ \mathrm {in}\ \Omega \cap B_{\delta'}(0),\\
\liminf_{x \rightarrow \del(\Omega \cap B_{\delta'}(0))} Av+B|x|^2\pm T_\alpha(u-\underline{u}) &\geq 0. 
\end{aligned}
\]
We choose the constants as follows.  First, since $v \geq 0$ on $\del(\Omega \cap B_{\delta'}(0))$, it suffices to choose $B$ large so that
\[
\liminf_{x \rightarrow \del(\Omega \cap B_{\delta'}(0))} B|x|^2\pm T_\alpha(u-\underline{u}) \geq 0.
\]
On $\Omega \cap \partial B_{\delta'}(0)$ we use $|x|=\delta'$, and that $ T_\alpha(u-\underline{u})$ is bounded by the gradient estimate.  On $\del \Omega \cap B_\delta' (0)$, we can choose $B$ large, and universal by the estimate \eqref{T_approx}.  Having chosen $B$ we choose $A$ using~\eqref{eq:LTest} and Lemma~\ref{Lv_lemma}. It follows that $Av+B|x|^2\pm T_\alpha (u-\underline{u}) \geq 0$ inside $\Omega\cap B_{\delta'}(0)$. Since $Av+B|x|^2\pm T_\alpha (u-\underline{u})$ attains zero at the origin, it follows that
\[
\del_n (Av+B|x|^2\pm T_\alpha (u-\underline{u})) (0) \geq 0,
\]

and so
\[
|u_{\alpha n}(0)-\underline{u}_{\alpha n}(0)|\leq |Av_n(0)| + |\sum_{\beta < n} \rho_{\alpha \beta} (u-\underline{u})_\beta(0)| \leq C,
\]
which gives the mixed second derivative bounds $|u_{\alpha n}| \leq C$ for all $\alpha < n$.
\bigskip \par
The next step is to estimate $u_{nn}$ on the boundary $\partial\Omega$. Recall that it suffices to obtain an upper bound $u_{nn} \leq C$, since $D^{2}u \geq -C$ by Lemma~\ref{lem:cone_facts}. We use an idea of N. Trudinger \cite{T1}, later used by B. Guan \cite{G1}, to obtain this estimate.
\par
Let us explain the main idea.  Fix a point $x \in \del \Omega$, which we assume to be the origin for simplicity, and let $\{e_i \}_{i=1}^{n}$ be an orthonormal local frame defined in a neighbourhood of the origin such that $e_n$ is the inner normal when restricted to $\del \Omega$. For $1 \leq \alpha,\beta \leq n-1$, define $\sigma_{\alpha\beta} = \langle \nabla_{e_{\alpha}} e_\beta, e_n \rangle$, where $\nabla$ denotes the covariant derivative with respect to the flat Euclidean metric. On $\del \Omega$, $\sigma_{\alpha \beta}$ is the second fundamental form and since $u|_{\del \Omega} = \underline{u}|_{\del\Omega}$ we see that for any $x \in \del \Omega$ there holds
\begin{equation} \label{bdd_identity}
u_{\alpha\beta}(x)-\underline{u}_{\alpha\beta}(x)=-(u-\underline{u})_n(x) \sigma_{\alpha\beta}(x),
\end{equation}
where
\[
u_{\alpha\beta} = \nabla_{e_\beta}(\nabla_{e_{\alpha}}u) - \nabla_{\nabla_{e_{\beta}}e_{\alpha}}u
\]
is the Riemannian Hessian.  Let us denote by $\lambda'(u_{\alpha\beta})$ the eigenvalues of the $(n-1)\times(n-1)$ matrix $u_{\alpha\beta}$; note that this is well-defined since the frame $\{e_i\}_{1\leq i\leq n}$ is assumed to be orthonormal. Recall $g(\lambda)=-e^{-A \sum \arctan \lambda_i}$ and $\psi(x)=-e^{-Ah(x)}$. Our goal is to prove the following lemma

\begin{lem}\label{lem:bdC2main}  
There exist constants $R_0, c_0 >0$ such that for all $R \geq R_0$ there holds
\[
g(\lambda'(u_{\alpha \beta}), R) > \psi(x)+c_0, \qquad \forall x \in \del\Omega.
\]
\end{lem}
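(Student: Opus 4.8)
The plan is to let $R\to\infty$, which reduces the statement to a uniform \emph{phase gap} for the tangential Hessian of $u$ along $\del\Omega$, and then to extract that gap from the subsolution. Since $g=-e^{-AF}$ is strictly increasing in $F$, with derivative bounded below on any compact set of values, and since $g(\lambda_1,\dots,\lambda_{n-1},R)\uparrow -\exp\!\big(-A(\sum_{\alpha<n}\arctan\lambda_\alpha+\tfrac\pi2)\big)$ as $R\to\infty$ while $\psi(x)=-e^{-Ah(x)}$ with $h$ bounded, it is enough to produce $c_0'>0$, depending only on $\Omega,h,\delta$ and $\|\underline u\|_{C^4(\overline\Omega)}$, with
\[
\sum_{\alpha=1}^{n-1}\arctan\lambda'_\alpha\big(u_{\alpha\beta}(x)\big)+\frac{\pi}{2}\ \geq\ h(x)+c_0',\qquad \forall\, x\in\del\Omega,
\]
and then to choose $R_0$ so that $\arctan R\geq \tfrac\pi2-\tfrac{c_0'}2$ for $R\geq R_0$. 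First note that $\lambda'_\alpha(u_{\alpha\beta}(x))$ stays in a fixed compact set: by \eqref{bdd_identity}, $u_{\alpha\beta}=\underline u_{\alpha\beta}-(u-\underline u)_n\,\sigma_{\alpha\beta}$ on $\del\Omega$, so the $C^4$ bounds on $\underline u$ and $\del\Omega$, Proposition~\ref{prop:c1_est}, and Lemma~\ref{lem:c0 estimate} (which gives $0\leq(u-\underline u)_n\leq(w-\underline u)_n\leq C$) bound $|u_{\alpha\beta}(x)|$, while Cauchy interlacing of the principal block $(u_{\alpha\beta}(x))$ inside $D^2u(x)$ together with Lemma~\ref{lem:cone_facts}(1),(3) gives $\lambda'_\alpha(u_{\alpha\beta}(x))\geq\lambda_n(D^2u(x))\geq-C(\delta)$.

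The phase gap holds for the subsolution. As in the proof of Corollary~\ref{cor:subsol}, $\underline u$ satisfies \eqref{eq:C_subsoln}, i.e. $\lim_{t\to\infty}f(\underline\lambda(p)+te_i)=\sum_{q\neq i}\arctan\underline\lambda_q+\tfrac\pi2>h(p)$ for all $p,i$; because $f$ depends only on the eigenvalues, the same holds with $e_i$ replaced by an arbitrary unit vector $v$, since $\lim_{t\to\infty}f(D^2\underline u+tvv^{T})=\sum_{\alpha<n}\arctan\nu_\alpha+\tfrac\pi2$ with $\nu_\alpha$ the eigenvalues of $D^2\underline u|_{v^\perp}$, and Cauchy interlacing gives $\sum_{\alpha<n}\arctan\nu_\alpha\geq\sum_{q\geq2}\arctan\underline\lambda_q$, which is $>h(p)-\tfrac\pi2$ by the case $i=1$ of \eqref{eq:C_subsoln}. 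Taking $v=e_n$ at a boundary point and recognising $(\underline u_{\alpha\beta}(x))$ as $D^2\underline u(x)|_{e_n^\perp}$, we get $\sum_{\alpha<n}\arctan\lambda'_\alpha(\underline u_{\alpha\beta}(x))+\tfrac\pi2>h(x)$, and by continuity and compactness of $\del\Omega$ there is a uniform $\eta_0>0$ with $\sum_{\alpha<n}\arctan\lambda'_\alpha(\underline u_{\alpha\beta}(x))+\tfrac\pi2\geq h(x)+\eta_0$ for all $x\in\del\Omega$.

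It remains to pass from $\underline u$ to $u$. By \eqref{bdd_identity}, $u_{\alpha\beta}(x)=\underline u_{\alpha\beta}(x)-s(x)\sigma(x)$ with $s(x):=(u-\underline u)_n(x)\in[0,C]$. When $\sigma(x)$ is negative semidefinite the term $-s(x)\sigma(x)$ is positive semidefinite, so $\lambda'_\alpha(u_{\alpha\beta}(x))\geq\lambda'_\alpha(\underline u_{\alpha\beta}(x))$ and the gap $\eta_0$ is inherited; more generally, writing $u_{\alpha\beta}(x)$ as a convex combination of $\underline u_{\alpha\beta}(x)$ and $\underline u_{\alpha\beta}(x)-C\sigma(x)$ and invoking the concavity of $g$ from Corollary~\ref{cor:Gconcave}, one bounds $g(\lambda'(u_{\alpha\beta}(x)),R)$ below by the minimum of its values at the endpoints, \emph{as long as} the whole interpolating segment, with $R$ appended, stays in $\Gamma$ — which holds when $s(x)$ is small relative to $\eta_0/\|\sigma(x)\|$. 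The remaining, and I expect hardest, regime is a convex boundary point where $(u-\underline u)_n(x)$ is large: there the segment leaves $\Gamma$, so concavity of $g$ is not directly available, and one must use the equation for $u$ itself. A first step is that at a strictly convex boundary point the a priori lower bound $\lambda'_\alpha(u_{\alpha\beta}(x))\geq\lambda_n(D^2u(x))\geq-C(\delta)$ already confines $(u-\underline u)_n(x)$ in terms of $\delta$, $\underline u$, and the principal curvatures of $\del\Omega$; beyond that, one runs a barrier argument in the spirit of Lemma~\ref{Lv_lemma} and of Trudinger and Guan — applying the maximum principle to a quantity assembled from $D^2u$, the barrier $v$, and the supersolution bound $F^{ij}[\underline u_{ij}-u_{ij}]\geq\tau$ of Corollary~\ref{cor:subsol}, and exploiting the concavity of $g$ — to exclude the bad regime and produce the uniform gap $c_0'$. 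With $R_0$ chosen as in the first paragraph, the lemma follows; as noted after Lemma~\ref{Concave_G}, it is exactly the failure of the operator to fit a general structural framework that forces this last step to be done by hand.
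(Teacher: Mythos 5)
Your reduction is sound as far as it goes: letting $R\to\infty$ does reduce the lemma to the uniform tangential phase gap $\sum_{\alpha<n}\arctan\lambda'_\alpha(u_{\alpha\beta}(x))+\tfrac{\pi}{2}\geq h(x)+c_0'$ on $\del\Omega$, and your derivation of that gap for $\underline{u}$ (via interlacing and compactness) is correct and parallels \eqref{schur_horn_applied} in the paper. But the passage from $\underline{u}$ to $u$ is exactly the content of the lemma, and your treatment of it has a genuine gap. The concavity/convex-combination step only bounds $g$ at $u_{\alpha\beta}(x)$ from below by the minimum of its values at $\underline{u}_{\alpha\beta}(x)$ and $\underline{u}_{\alpha\beta}(x)-C\sigma(x)$, and you have no lower bound at the second endpoint; as you concede, this only covers a perturbative regime. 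Note also that the ``bad regime'' is not one where $(u-\underline{u})_n$ is large in absolute terms --- it is already bounded by the gradient estimate --- but one where a bounded normal derivative combined with the curvature of $\del\Omega$ pushes $\lambda'(u_{\alpha\beta})$ close to $\del\Gamma$; equivalently, by Lemma~\ref{lem:n-3} and Lemma~\ref{lem:CNSLem}, one where $u_{nn}(x)$ is large. So what remains is precisely the double-normal boundary estimate, and invoking ``a barrier argument in the spirit of Trudinger and Guan'' without executing it leaves the proof incomplete.

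Moreover, the barrier argument cannot be run at an arbitrary boundary point in the way you suggest. The device that makes it work (Trudinger's trick, followed in the paper) is to take $x_0$ to be the \emph{minimum} point of $\tilde{G}(u_{\alpha\beta})-\psi$ over $\del\Omega$ and to bound $u_{nn}(x_0)$ only there; the gap then propagates to every other boundary point by minimality, and conversely the minimality is what forces the test function $\Phi$ to be nonnegative on $\del\Omega$ (via the concavity of $\tilde{G}$, Corollary~\ref{cor:tGconcave}, which itself rests on the Schur--Horn argument of Lemma~\ref{lem:n-3}). Two further ingredients are needed that your sketch does not supply: the tangential subsolution property $\tilde{G}_0^{\alpha\beta}(\underline{u}_{\alpha\beta}-u_{\alpha\beta})(x_0)\geq\tau>0$ when $u_{nn}(x_0)$ is large (Lemma~\ref{G_0_tau_lemma}, which requires the eigenvalue asymptotics of Lemma~\ref{lem:CNSLem} together with \eqref{schur_horn_applied}), and the explicit construction of $\Phi$ from $\eta=\tilde{G}_0^{\alpha\beta}\sigma_{\alpha\beta}$, the barrier $v$ of Lemma~\ref{Lv_lemma}, and the identity \eqref{bdd_identity}, followed by the maximum principle to get $\Phi_n(0)\geq -C$ and hence $u_{nn}(0)\leq C$. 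Without the minimum-point structure and these two lemmas, the step you label as the hardest cannot be closed by the means you describe.
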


Let us explain how this lemma implies the boundary $C^2$ estimate.  Fix a point $p \in \del \Omega$.  Fix coordinates $ (x_1,\ldots,x_{n-1},x_n)$ near $p$ so that $p$ is the origin and $\frac{\del}{\del x_i} = e_{i}(p)$; in particular, $\frac{\del}{\del x_n}$ is the interior normal for $\del \Omega$ at $p$.  By an orthogonal transformation, we may assume that $u_{\alpha\beta}$ is diagonal at $x$.  We need the following lemma, due to Caffarelli-Nirenberg-Spruck
\begin{lem}[\cite{CNS3}, Lemma 1.2]\label{lem:CNSLem}
Consider the $n\times n$ symmetric matrix
\[
\begin{pmatrix}
d_1 &0 & \cdots&0&a_1\\
0&d_2&\cdots&0&a_2\\
\vdots & \vdots& \ddots& \vdots&\vdots\\
0 &0&\,&d_{n-1}&a_{n-1}\\
a_1&a_2&\cdots&a_{n-1}&a
\end{pmatrix}
\]
with $d_1,\ldots,d_{n-1}$ fixed, $|a|$ tending to infinity, and $|a_i|<C$ for $1\leq i\leq n-1$. Then the eigenvalues $\lambda_1,\ldots,\lambda_n$ behave like
\[
\begin{aligned}
\lambda_{\alpha} &= d_{\alpha} +o(1) \qquad 1\leq \alpha\leq n-1\\
\lambda_n &= a\left( 1+ O\left(\frac{1}{a}\right)\right)
\end{aligned}
\]
where $o(1)$ and $O\left(\frac{1}{a}\right)$ are uniform depending only on $d_1,\ldots,d_{n-1}, C$.
\end{lem}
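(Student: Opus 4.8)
The plan is to reduce everything to the explicit characteristic polynomial of an arrowhead matrix and to read off the asymptotics from the resulting secular equation, keeping all constants explicit so that the claimed uniformity is automatic. Write the given matrix as $A$, with $(n-1)\times(n-1)$ diagonal block $D=\mathrm{diag}(d_1,\dots,d_{n-1})$, corner entry $a$, and ``arrow'' vector $b=(a_1,\dots,a_{n-1})^{\mathrm{T}}$, so that $|b|^2=\sum_\alpha a_\alpha^2\le (n-1)C^2$. Replacing $A$ by $-A$ (which negates the $d_\alpha$ and the eigenvalues and preserves the bound on the $a_\alpha$) we may assume $a\to+\infty$.

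First I would record the secular identity: for $\lambda\notin\{d_1,\dots,d_{n-1}\}$ the Schur complement of $D-\lambda I$ in $A-\lambda I$ gives
\[
\det(A-\lambda I)=\Big(\prod_{\alpha=1}^{n-1}(d_\alpha-\lambda)\Big)\Big(a-\lambda-\sum_{\alpha=1}^{n-1}\frac{a_\alpha^2}{d_\alpha-\lambda}\Big).
\]
Collecting the $d_\alpha$ into their distinct values $\delta_1>\dots>\delta_p$ with weights $w_j^2:=\sum_{d_\alpha=\delta_j}a_\alpha^2$ and multiplicities $m_j:=\#\{\alpha:d_\alpha=\delta_j\}$, the second factor becomes $\psi(\lambda):=a-\lambda-\sum_{j}\frac{w_j^2}{\delta_j-\lambda}$. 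From this one reads off the spectrum of $A$: each $\delta_j$ is an eigenvalue of multiplicity $m_j-1$ if $w_j>0$ and of multiplicity $m_j$ if $w_j=0$, the corresponding eigenvectors being supported on the coordinate subspace of $\delta_j$ and, when $w_j>0$, orthogonal to $b$ there; the remaining eigenvalues are exactly the zeros of $\psi$. Since $\psi'(\lambda)=-1-\sum_j w_j^2(\delta_j-\lambda)^{-2}<0$ and, at each pole $\delta_j$ with $w_j>0$, $\psi\to-\infty$ as $\lambda\to\delta_j^-$ and $\psi\to+\infty$ as $\lambda\to\delta_j^+$, while $\psi\to\pm\infty$ as $\lambda\to\mp\infty$, the function $\psi$ has exactly one zero in each of the intervals cut out by those poles together with $(-\infty,\min_j\delta_j)$ and $(\max_j\delta_j,+\infty)$.

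Next I would localize these zeros as $a\to+\infty$. On any region bounded away from the poles one has $\psi(\lambda)=a-\lambda+O(1)>0$ once $a$ is large, so $\psi$ can vanish only within distance $O(1/a)$ of a pole $\delta_j$, where the term $-w_j^2/(\delta_j-\lambda)$ overwhelms $a$, or near $+\infty$. Evaluating $\psi$ at two points of the form $\delta_j-c/a$, with the constants $c$ controlled only by $\max_\alpha|d_\alpha|$ and $C$, sandwiches the zero attached to $\delta_j$ (for $w_j>0$) within $O(1/a)$ below $\delta_j$; and evaluating $\psi$ at a point comparable to $a$ pins the last zero at $a+O(1/a)$. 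Combined with the exact eigenvalues equal to the $\delta_j$, this shows that the multiset of the $n-1$ bounded eigenvalues of $A$ converges to $\{d_1,\dots,d_{n-1}\}$ at rate $O(1/a)$, while the remaining eigenvalue is $\lambda_n=a(1+O(1/a))$; in particular the eigenvalues can be indexed so that $\lambda_\alpha=d_\alpha+O(1/a)=d_\alpha+o(1)$ for $\alpha\le n-1$, with all implied constants depending only on $\max_\alpha|d_\alpha|$ and $C$. For the large eigenvalue one may also bypass $\psi$ altogether: $\lambda_{\max}(A)\ge e_n^{\mathrm{T}}Ae_n=a$, while Gershgorin forces the other $n-1$ eigenvalues into $\bigcup_\alpha[d_\alpha-(n-1)C,\,d_\alpha+(n-1)C]$ once $a$ is large, so $\sum_i\lambda_i=a+\sum_\alpha d_\alpha$ gives $\lambda_n=a+O(1)$.

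I expect the only genuine subtlety to be uniformity: a soft compactness argument would give qualitative convergence of the bounded eigenvalues to the $d_\alpha$ but not the stated uniform rate, so it is essential that the localization be carried out through the explicit function $\psi$ and its explicit blow-up near the poles. The degenerate configurations — several $d_\alpha$ coinciding, or some $a_\alpha=0$ — are precisely what the grouping/deflation in the second step handles, and I would write that bookkeeping out with care, since it is the one place where uniformity could be lost through carelessness; everything else is a direct calculation.
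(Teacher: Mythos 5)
Your proof is correct. Note first that the paper does not prove this statement at all: it is quoted verbatim from Caffarelli--Nirenberg--Spruck \cite{CNS3}, Lemma~1.2, and used as a black box, so there is no in-paper argument to compare against. Your self-contained proof via the arrowhead/secular-equation decomposition is sound: the Schur-complement factorization $\det(A-\lambda I)=\prod_\alpha(d_\alpha-\lambda)\,\psi(\lambda)$ is exact, the deflation bookkeeping for repeated $d_\alpha$ and vanishing $a_\alpha$ accounts for all $n$ eigenvalues with the correct multiplicities, and the monotonicity of $\psi$ on each interval between poles plus the single evaluation $\psi(\delta_j-K/a)>0$ with $K=2(n-1)C^2$ (the zero is automatically below $\delta_j$, so only the lower test point is really needed) pins each bounded root within $O(1/a)$ of its pole with constants depending only on the $d_\alpha$ and $C$, which is exactly the uniformity the lemma asserts. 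The Gershgorin/trace shortcut for $\lambda_n=a+O(1)$ is also fine and is in fact all that the application in the paper requires for the large eigenvalue. This is essentially the same mechanism as the original CNS argument (expansion of the characteristic polynomial in $1/a$), just organized through the explicit rational function $\psi$, which makes the uniformity in the $a_\alpha$ transparent; the only caveat is that for the application in the paper one needs $o(1)$ as stated, not merely the $O(1)$ localization that Gershgorin alone provides, and your secular-equation step correctly supplies the stronger $O(1/a)$ rate.
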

As a consequence of this lemma, for every $\delta_0>0$ there exists $R_{\delta_0} \gg 1$ such that if $u_{nn}(0) \geq R_{\delta_0}$, then the eigenvalues of $u_{ij}(0)$ satisfy
\[
|\lambda(u_{ij}) - (\lambda'(u_{\alpha\beta}),u_{nn})| < \delta_0.
\]
By continuity of $g$, there exists a $\delta_0>0$ depending on $c_0$ such that if $u_{nn}(0) \geq \max \{ R_{\delta_0},R_0\}$, then
\[
\begin{aligned}
g(\lambda(u_{ij}))(0) &\geq g(\lambda'(u_{\alpha\beta}), u_{nn})(0)-\frac{c_0}{2} \\
&\geq g(\lambda'(u_{\alpha\beta})(0), R_0) - \frac{c_0}{2}\\
&>\psi(0) +\frac{c_0}{2},
\end{aligned}
\]
which is a contradiction.  Thus, it suffices to prove Lemma~\ref{lem:bdC2main}, 
which will be the goal of the remainder of this section.  To begin, for $x \in \del \Omega$, we define operators 
\[
\begin{aligned}
\tilde{G}(u_{\alpha\beta}(x))&=\lim_{R\rightarrow\infty}g(\lambda'(u_{\alpha\beta}),R) = - \exp \bigg(-A \sum_{\alpha=1}^{n-1} \arctan \lambda'_\alpha - A {\pi \over 2} \bigg), \\
\tilde{G}_0^{\alpha\beta}&=\frac{\partial\tilde{G}}{\partial u_{\alpha\beta}}(u_{\alpha\beta}(0)).
\end{aligned}
\]

Consider the function $\tilde{G}(u_{\alpha\beta})(x)-\psi(x)$ on $\del\Omega$.  Assume the minimum of this function is achieved at $x_0 \in \del \Omega$. Choose coordinates such that $x_0$ is the origin and the $(n-1) \times (n-1)$ upper block $u_{\alpha \beta}(0)$ is diagonal, with $x_n$ the inner normal of $ \Omega$ at $0$. We claim that to obtain Lemma~\ref{lem:bdC2main} it suffices to obtain an upper bound $u_{nn}(0) \leq C$. Indeed, by Lemma~\ref{schur_horn_u} below, in this case we have
\[
\sum_{\alpha=1}^{n-1} \arctan \lambda'_\alpha + {\pi \over 2} \geq h(0) - \arctan u_{nn} + {\pi \over 2} \geq h(0) + \epsilon.
\]
Then
\[
\tilde{G}(0) \geq -e^{-A h(0)} e^{-A \varepsilon} = \psi(0) + e^{-A h(0)}(1 - e^{-A \varepsilon}),
\]
which proves a lower bound $\tilde{G}(x) - \psi(x) \geq 2 c_0 >0$. We may now choose $R_0$ large enough such that
\[
g(\lambda'(u_{\alpha\beta}),R_0) \geq \tilde{G}(x) - c_0 \geq \psi(x) + c_0,
\]
and Lemma~\ref{lem:bdC2main} follows.  Thus, it suffices to prove the estimate $u_{nn}(0) \leq C$ where $0 \in \del \Omega$ is the point where  $\tilde{G}(u_{\alpha\beta})(x)-\psi(x)$ achieves its minimum value.

The main property of $\tilde{G}$ is that it is a concave function of $u_{\alpha \beta}$. This follows from the following lemma, together with Lemma~\ref{Concave_G}.  
\begin{lem}\label{lem:n-3}
At any $x \in \del \Omega$, there holds
\[ \label{schur_horn_u}
\sum_{\alpha=1}^{n-1} \arctan \lambda'_\alpha \geq \sum_{i=1}^{n} \arctan \lambda_i - \arctan u_{nn}(x).
\]
It follows that
\[
\sum_{\alpha=1}^{n-1} \arctan \lambda'_{\alpha} \geq {(n-3) \pi \over 2}+\delta.
\]
\end{lem}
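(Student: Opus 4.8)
The plan is to factor the Lagrangian phase of $D^2u(x)$ through the Schur complement of its bottom-right entry, reducing the claimed inequality to the pointwise fact that $\lambda'_\alpha+u_{nn}(x)\ge 0$ for every $\alpha<n$. Fix $x\in\del\Omega$ and choose coordinates as in the statement, so that the $(n-1)\times(n-1)$ block $(u_{\alpha\beta}(x))_{\alpha,\beta<n}$ is diagonal with entries $\lambda'_1,\dots,\lambda'_{n-1}$; thus in these coordinates $D^2u(x)$ has diagonal $(n-1)\times(n-1)$ block $M'=\mathrm{diag}(\lambda'_1,\dots,\lambda'_{n-1})$, off-diagonal column $b$ with entries $b_\alpha=u_{\alpha n}$, and bottom-right entry $u_{nn}$. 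Since $u$ solves $\sum_i\arctan\lambda_i=h$ and $h\ge (n-2)\tfrac{\pi}{2}+\delta$, the Hessian $D^2u(x)$ lies in the supercritical range, so Lemma~\ref{lem:cone_facts} applies to its eigenvalues $\lambda_1\ge\cdots\ge\lambda_n$.

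First I would record the complex-determinant description of the phase: for a symmetric matrix $S$ with eigenvalues $\mu_1,\dots,\mu_n$ one has $\det(I+\sqrt{-1}\,S)=\prod_i(1+\sqrt{-1}\,\mu_i)$, so $\sum_i\arctan\mu_i$ is a continuous branch of $\arg\det(I+\sqrt{-1}\,S)$, equal to $0$ at $S=0$. Applying the Schur complement formula to the above block decomposition gives
\[
\det(I+\sqrt{-1}\,D^2u(x))=\Big(\textstyle\prod_{\alpha<n}(1+\sqrt{-1}\,\lambda'_\alpha)\Big)\,Z,\qquad Z:=(1+\sqrt{-1}\,u_{nn})+\sum_{\alpha<n}\frac{u_{\alpha n}^2\,(1-\sqrt{-1}\,\lambda'_\alpha)}{1+(\lambda'_\alpha)^2}.
\]
The structural point is $\mathrm{Re}\,Z=1+\sum_{\alpha<n}\frac{u_{\alpha n}^2}{1+(\lambda'_\alpha)^2}\ge 1>0$, and more generally, for \emph{every} symmetric matrix, $\mathrm{Re}\big(b^{T}(I+\sqrt{-1}M')^{-1}b\big)=b^{T}(I+M'^{2})^{-1}b\ge 0$. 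Hence, on the connected space of symmetric matrices, the function $S\mapsto\sum_i\arctan\mu_i(S)-\sum_{\alpha<n}\arctan\mu'_\alpha(S)-\arctan\big(\mathrm{Im}\,Z(S)/\mathrm{Re}\,Z(S)\big)$ is continuous and valued in $2\pi\mathbb{Z}$, hence constant, and it vanishes at $S=0$. Evaluating at $S=D^2u(x)$ yields
\[
\sum_{i=1}^{n}\arctan\lambda_i=\sum_{\alpha=1}^{n-1}\arctan\lambda'_\alpha+\arctan\!\Big(\frac{\mathrm{Im}\,Z}{\mathrm{Re}\,Z}\Big).
\]

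It then remains to show $\arctan(\mathrm{Im}\,Z/\mathrm{Re}\,Z)\le\arctan u_{nn}$, equivalently $\mathrm{Im}\,Z\le u_{nn}\,\mathrm{Re}\,Z$ (legitimate since $\mathrm{Re}\,Z>0$), which after expanding and cancelling the $u_{nn}$ terms is exactly
\[
\sum_{\alpha<n}\frac{u_{\alpha n}^2\,(\lambda'_\alpha+u_{nn})}{1+(\lambda'_\alpha)^2}\ge 0.
\]
So it suffices to prove $\lambda'_\alpha+u_{nn}(x)\ge 0$ for each $\alpha<n$. But $\lambda'_\alpha+u_{nn}$ is the trace of the $2\times 2$ principal submatrix of $D^2u(x)$ on rows and columns $\{\alpha,n\}$; by Cauchy interlacing its eigenvalues $\nu_1\ge\nu_2$ satisfy $\nu_1\ge\lambda_{n-1}$ and $\nu_2\ge\lambda_n$, so $\lambda'_\alpha+u_{nn}=\nu_1+\nu_2\ge\lambda_{n-1}+\lambda_n\ge 0$ by property (1) of Lemma~\ref{lem:cone_facts}. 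This proves the first displayed inequality, and the second follows immediately, since $u$ solves the equation at $x$ and $\arctan u_{nn}<\tfrac{\pi}{2}$: $\sum_{\alpha}\arctan\lambda'_\alpha\ge h(x)-\arctan u_{nn}>(n-2)\tfrac{\pi}{2}+\delta-\tfrac{\pi}{2}=(n-3)\tfrac{\pi}{2}+\delta$.

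The step I expect to need the most care is the branch bookkeeping that upgrades the determinant identity to the genuine equality of phases rather than equality modulo $2\pi$; the positivity $\mathrm{Re}\,Z>0$ is exactly what makes this work, and it is also why the supercritical hypothesis is used only through the elementary bound $\lambda'_\alpha+u_{nn}\ge 0$. An alternative route to the first inequality is via Schur--Horn: conjugating $D^2u(x)$ by $\mathrm{diag}(O',1)$ with $O'$ diagonalizing the block exhibits a symmetric matrix with eigenvalues $\lambda_1,\dots,\lambda_n$ and diagonal $(\lambda'_1,\dots,\lambda'_{n-1},u_{nn})$, so $(\lambda_i)$ majorizes $(\lambda'_1,\dots,\lambda'_{n-1},u_{nn})$; together with concavity of $\arctan$ on $[0,\infty)$ and Karamata's inequality this gives the claim immediately when $\lambda_n\ge 0$, while the case $\lambda_n<0$ still seems to need the interlacing input above, so I would present the determinant argument as the main line.
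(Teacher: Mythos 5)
Your proof is correct, but it takes a genuinely different route from the one in the paper. The paper disposes of the first inequality in two lines: by the Schur--Horn theorem, the vector $(\lambda'_1,\dots,\lambda'_{n-1},u_{nn}(x))$ --- the diagonal of $\nabla^2 u$ after diagonalizing the tangential block --- lies in the convex hull of the permutations of $(\lambda_1,\dots,\lambda_n)$; since the superlevel set $\Gamma^{h(x)}=\{\mu\in\mathbb{R}^n:\sum_i\arctan\mu_i\ge h(x)\}$ is convex (property (5) of Lemma~\ref{lem:cone_facts}, i.e.\ Yuan's lemma) and contains every permutation of $\lambda$, it contains that diagonal, which is precisely $\sum_\alpha\arctan\lambda'_\alpha+\arctan u_{nn}\ge h(x)=\sum_i\arctan\lambda_i$. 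This also resolves the concern in your closing paragraph: the Schur--Horn route needs neither Karamata nor concavity of $\arctan$ on all of $\mathbb{R}$, because convexity of the superlevel set substitutes for concavity of the symbol, and the case $\lambda_n<0$ causes no difficulty. Your argument instead proves the exact phase-additivity identity $\sum_i\arctan\lambda_i=\sum_\alpha\arctan\lambda'_\alpha+\arctan(\mathrm{Im}\,Z/\mathrm{Re}\,Z)$ via the Schur complement of $I+\sqrt{-1}\,D^2u$, with the branch pinned down by $\mathrm{Re}\,Z\ge 1$ and connectedness of $\mathrm{Sym}(n)$, and then reduces the inequality to the pointwise bound $\lambda'_\alpha+u_{nn}\ge 0$, which follows from Cauchy interlacing of the $2\times 2$ principal minor together with $\lambda_{n-1}+\lambda_n\ge 0$ from property (1) of Lemma~\ref{lem:cone_facts}. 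All of these steps check out. What your version buys is independence from the convexity of $\Gamma^\sigma$, an exact formula for the defect $\arctan u_{nn}-\arctan(\mathrm{Im}\,Z/\mathrm{Re}\,Z)$ rather than just its sign, and a use of the supercritical hypothesis only through the elementary inequality $|\lambda_n|\le\lambda_{n-1}$; what it costs is length. The passage from the first displayed inequality to the second is identical in both arguments.
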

\begin{proof}
Fix $x \in \del \Omega$.  By performing an orthogonal transformation to the vector fields $\{e_{\alpha}\}_{1\leq \alpha \leq n-1}$, we may assume that the $(n-1) \times (n-1)$ upper block $u_{\alpha \beta}(x)$ is diagonal, with $e_n$ the inner normal of $\del \Omega$ at $x$.
\[
\nabla^2u= \left( \begin{array}{cccc}
  \lambda'_1 & 0 & 0 & \ast \\
  0 &  \lambda'_2 & 0& \ast \\
  0 & 0 & \ddots & \ast\\
  \ast & \ast & \ast & u_{nn}\\
  \end{array} \right).
\]
By the Schur-Horn theorem \cite{H1}, $( \lambda'_1,\dots,\lambda'_{n-1},u_{nn} )$ is in the convex hull of vectors which are permutations of $( \lambda_1, \dots, \lambda_n )$. Let $\Gamma^{h(x)}$ be the set of $\mu \in \mathbb{R}^n$ such that $\sum_i \arctan \mu_i \geq h(x)$. By Lemma~\ref{lem:cone_facts}, $\Gamma^{h(x)}$ is convex. Since any permutation of $( \lambda_1, \dots, \lambda_n )$ lies in $\Gamma^{h(x)}$, we see that $(\lambda'_1,\dots,\lambda'_{n-1},u_{nn} )$ lies in $\Gamma^{h(x)}$. The lemma follows.
\end{proof}

As mentioned above, this lemma combined with Lemma~\ref{Concave_G} implies

\begin{cor}\label{cor:tGconcave}
The operator $\tilde{G}$ is concave on the set $u_{\alpha\beta}(\del \Omega)$.
\end{cor}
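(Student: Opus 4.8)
The plan is to realize $\tilde{G}$, up to a fixed positive multiplicative constant, as the spectral function of a \emph{concave} symmetric function on a \emph{convex} set, and then invoke the elementary passage from eigenvalue concavity to matrix concavity that already underlies the step from Lemma~\ref{Concave_G} to Corollary~\ref{cor:Gconcave}.

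First I would pin down the domain. Write $\lambda' = (\lambda'_1,\dots,\lambda'_{n-1})$ for the eigenvalues of the tangential Hessian $u_{\alpha\beta}$ at a point of $\del\Omega$. Lemma~\ref{lem:n-3} gives $\sum_{\alpha=1}^{n-1}\arctan\lambda'_\alpha \geq (n-3)\frac{\pi}{2}+\delta$, so $\lambda'$ lies in
\[
E' := \Big\{ \mu \in \mathbb{R}^{n-1} : \sum_{\alpha=1}^{n-1}\arctan\mu_\alpha \geq (n-3)\frac{\pi}{2}+\delta \Big\}.
\]
The hypothesis that $h$ takes values in $[(n-2)\frac{\pi}{2}+\delta,\, n\frac{\pi}{2})$ forces $0<\delta<\pi$, so $(n-3)\frac{\pi}{2}+\delta$ lies in the interval $\big((n-3)\frac{\pi}{2},\,(n-1)\frac{\pi}{2}\big)$, and Lemma~\ref{lem:cone_facts}(5), applied with $n-1$ in place of $n$, shows that $E'$ is a closed convex subset of $\mathbb{R}^{n-1}$. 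Consequently $\{ M \in {\rm Sym}(n-1) : \lambda'(M)\in E' \}$ is a closed convex set of symmetric matrices, and by the above it contains $u_{\alpha\beta}(\del\Omega)$.

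Next I would apply Lemma~\ref{Concave_G} in $n-1$ variables: the argument there goes through unchanged, since it requires only that $A$ be large in terms of $\delta$ with no dependence on the number of variables, and the inequality $\sum_\alpha \frac{1}{\mu_\alpha}\leq -\tan\delta$ it uses is Lemma~\ref{lem:cone_facts}(4) in dimension $n-1$. Hence, for the same $A=A(\delta)$ as in Corollary~\ref{cor:Gconcave} (enlarging it if necessary), the permutation-invariant function $\hat{g}(\mu) := -\exp\!\big(-A\sum_{\alpha=1}^{n-1}\arctan\mu_\alpha\big)$ is concave on $E'$. Since
\[
\tilde{G}(u_{\alpha\beta}) = -\exp\!\Big(-A\sum_{\alpha=1}^{n-1}\arctan\lambda'_\alpha - A\frac{\pi}{2}\Big) = e^{-A\pi/2}\,\hat{g}\big(\lambda'(u_{\alpha\beta})\big),
\]
$\tilde{G}$ is a positive constant times the spectral function of $\hat{g}$ on the convex set $E'$. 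The standard fact that the spectral function of a permutation-invariant concave function on a convex domain is again concave then yields that $\tilde{G}$ is concave on $u_{\alpha\beta}(\del\Omega)$; concretely, writing the second variation of the spectral function in a basis diagonalizing $M$, it splits as $\sum_{\alpha,\gamma}\hat{g}_{\alpha\gamma}(\mu)M_{\alpha\alpha}M_{\gamma\gamma}$ plus a sum of terms $\frac{\hat{g}_\alpha(\mu)-\hat{g}_\gamma(\mu)}{\mu_\alpha-\mu_\gamma}M_{\alpha\gamma}^2$, and both are $\leq 0$ because $\hat{g}$ is concave (the Hessian is negative semidefinite, and the difference quotients are nonpositive).

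There is no substantial obstacle here — this is genuinely a corollary of Lemma~\ref{lem:n-3} and Lemma~\ref{Concave_G} — and the only care needed is to invoke the earlier results in the correct dimension: that $E'$ is convex (Lemma~\ref{lem:cone_facts}(5) one dimension down) and that the constant $A$ of Corollary~\ref{cor:Gconcave} remains admissible for $n-1$ eigenvalues.
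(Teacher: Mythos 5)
Your proof is correct, but it takes a somewhat different route from the one the paper intends. The paper defines $\tilde{G}(u_{\alpha\beta}) = \lim_{R\to\infty} g(\lambda'(u_{\alpha\beta}),R)$ and obtains concavity by combining Lemma~\ref{lem:n-3} with the $n$-dimensional Lemma~\ref{Concave_G}: since $\sum_{\alpha<n}\arctan\lambda'_\alpha \geq (n-3)\frac{\pi}{2}+\delta$, the augmented vector $(\lambda'(M),R)$ lies in the supercritical region for $R$ large, the map $M \mapsto G\bigl(\mathrm{diag}(M,R)\bigr)$ is then concave as the restriction of the concave operator $G$ to a linear slice, and $\tilde{G}$ is the pointwise limit of these concave functions. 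You instead work intrinsically in $n-1$ variables: you rerun Lemma~\ref{Concave_G} with $n$ replaced by $n-1$ on the cone $\{\sum_\alpha\arctan\mu_\alpha \geq (n-3)\frac{\pi}{2}+\delta\}$ (correctly observing that parts (1), (4), (5) of Lemma~\ref{lem:cone_facts} and the determinant identity are dimension-independent), and then apply the standard eigenvalue-to-matrix concavity lemma. Your version is more self-contained and avoids both the limiting step and the slight loss in $\delta$ that occurs when one appends a large but finite $R$ (since $(n-3)\frac{\pi}{2}+\delta+\arctan R$ only approaches $(n-2)\frac{\pi}{2}+\delta$ from below); the paper's version is shorter given that Lemma~\ref{Concave_G} is already on record. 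One small caution: the parenthetical ``enlarging $A$ if necessary'' should be dropped, since $A$ is already fixed by the definitions of $G$, $\tilde{G}$ and $\psi$ and cannot be changed at this stage; fortunately no enlargement is needed, because the requirement from the proof of Lemma~\ref{Concave_G} is $A > 2/\tan\delta$, which does not depend on the number of variables.
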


From the corollary we can deduce a version of Corollary~\ref{cor:subsol} on $\del \Omega$.
\begin{lem} \label{G_0_tau_lemma}
For $u_{nn}(0)$ large enough, there exists $\tau>0$ such that
\[
\tilde{G}_0^{\alpha\beta}(\underline{u}_{\alpha\beta}-u_{\alpha\beta})(0)\geq\tau>0.
\] 
\end{lem}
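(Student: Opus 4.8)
The plan is to derive the estimate from the concavity of $\tilde{G}$ (Corollary~\ref{cor:tGconcave}), using a lower bound for $\tilde{G}(\underline{u}_{\alpha\beta}(0))$ coming from the subsolution inequality together with the a priori $C^2$ bound on $\underline{u}$, and a matching upper bound for $\tilde{G}(u_{\alpha\beta}(0))$ that degenerates to $\psi(0)$ as $u_{nn}(0)\to\infty$ via Lemma~\ref{lem:CNSLem}. In contrast to the interior Corollary~\ref{cor:subsol}, the tangential Hessian $u_{\alpha\beta}(0)$ is a priori bounded here (by \eqref{bdd_identity} and the gradient estimate), so no appeal to Sz\'ekelyhidi's lemma is needed and plain concavity suffices.

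First I would observe that the Schur--Horn argument in the proof of Lemma~\ref{lem:n-3} applies equally to $\underline{u}$ (it uses only $\sum_i\arctan\underline{\lambda}_i(0)\geq h(0)$), giving for the eigenvalues $\underline{\lambda}'_\alpha(0)$ of $\underline{u}_{\alpha\beta}(0)$
\[
\sum_{\alpha=1}^{n-1}\arctan\underline{\lambda}'_\alpha(0)\ \geq\ h(0)-\arctan\underline{u}_{nn}(0)\ >\ h(0)-\tfrac{\pi}{2}\ \geq\ (n-3)\tfrac{\pi}{2}+\delta .
\]
Hence $\underline{u}_{\alpha\beta}(0)$ lies in the convex domain on which $\tilde{G}$ is concave (which also contains $u_{\alpha\beta}(0)$, by Lemma~\ref{lem:n-3}), so $\tilde{G}$ lies below its tangent plane at $u_{\alpha\beta}(0)$:
\[
\tilde{G}_0^{\alpha\beta}(\underline{u}_{\alpha\beta}-u_{\alpha\beta})(0)\ \geq\ \tilde{G}(\underline{u}_{\alpha\beta}(0))-\tilde{G}(u_{\alpha\beta}(0)).
\]
It remains to bound the right-hand side below by a positive constant once $u_{nn}(0)$ is large.

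Next I would bound the two terms. Since $\underline{u}_{nn}(0)$ is a diagonal entry of $D^2\underline{u}(0)$, it satisfies $\underline{u}_{nn}(0)\leq\|\underline{u}\|_{C^2(\overline{\Omega})}$ and, by Lemma~\ref{lem:cone_facts}(3), $\underline{u}_{nn}(0)\geq-C(\delta)$; hence $\tfrac{\pi}{2}-\arctan\underline{u}_{nn}(0)\geq2\epsilon_0$ for some $\epsilon_0=\epsilon_0(\|\underline{u}\|_{C^2},\delta)>0$, and the first display upgrades to $\tilde{G}(\underline{u}_{\alpha\beta}(0))\geq e^{-2A\epsilon_0}\psi(0)$. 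For the second term, \eqref{bdd_identity} and Proposition~\ref{prop:c1_est} give $|u_{\alpha\beta}(0)|\leq C$, and the mixed second-derivative estimate proved above gives $|u_{\alpha n}(0)|\leq C$; choosing coordinates so that $u_{\alpha\beta}(0)$ is diagonal, Lemma~\ref{lem:CNSLem} applies to $D^2u(0)$ with $a=u_{nn}(0)$ and shows $\lambda_\alpha(0)=\lambda'_\alpha(0)+o(1)$ for $\alpha<n$ and $\lambda_n(0)\to\infty$ as $u_{nn}(0)\to\infty$. Substituting into $\sum_i\arctan\lambda_i(0)=h(0)$ yields $\sum_{\alpha}\arctan\lambda'_\alpha(0)+\tfrac{\pi}{2}=h(0)+o(1)$, so $\tilde{G}(u_{\alpha\beta}(0))=\psi(0)e^{o(1)}\to\psi(0)$. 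Fixing $u_{nn}(0)$ large enough, with threshold depending only on $\underline{u},h,\Omega,\delta$, that $\tilde{G}(u_{\alpha\beta}(0))\leq\psi(0)+\tfrac12(1-e^{-2A\epsilon_0})|\psi(0)|$, and using $\psi(0)<0$, the concavity inequality becomes
\[
\tilde{G}_0^{\alpha\beta}(\underline{u}_{\alpha\beta}-u_{\alpha\beta})(0)\ \geq\ e^{-2A\epsilon_0}\psi(0)-\psi(0)-\tfrac12(1-e^{-2A\epsilon_0})|\psi(0)|\ =\ \tfrac12(1-e^{-2A\epsilon_0})|\psi(0)|\ =:\ \tau\ >\ 0,
\]
which is uniform since $|\psi(0)|=e^{-Ah(0)}\geq e^{-An\pi/2}$.

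The one step I expect to require care is the decoupling: Lemma~\ref{lem:CNSLem} may be invoked only after one knows that the tangential block $u_{\alpha\beta}(0)$ and the mixed entries $u_{\alpha n}(0)$ are a priori bounded — which is exactly what \eqref{bdd_identity} and the mixed second-derivative estimate provide — and one must track that the threshold for ``$u_{nn}(0)$ large'' and the constant $\tau$ depend only on $\underline{u}$, $h$, $\Omega$, and $\delta$. Everything else is bookkeeping with the monotonicity of $t\mapsto-e^{-At}$.
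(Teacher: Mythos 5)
Your proposal is correct and follows essentially the same route as the paper: concavity of $\tilde{G}$ reduces the claim to $\tilde{G}(\underline{u}_{\alpha\beta}(0))-\tilde{G}(u_{\alpha\beta}(0))>0$, the Schur--Horn argument of Lemma~\ref{lem:n-3} applied to $\underline{u}$ gives $\sum_\alpha\arctan\underline{\lambda}'_\alpha\geq h(0)-\arctan\underline{u}_{nn}(0)$ with $\underline{u}_{nn}(0)$ a priori bounded, and Lemma~\ref{lem:CNSLem} controls $\sum_\alpha\arctan\lambda'_\alpha$ as $u_{nn}(0)\to\infty$. The only (cosmetic) difference is that you bound each of the two terms against the reference value $\psi(0)$ rather than estimating the gap $\sum_\alpha\arctan\underline{\lambda}'_\alpha-\sum_\alpha\arctan\lambda'_\alpha\geq c_1$ directly as the paper does, and you make explicit the (correct) point that $\underline{u}_{\alpha\beta}(0)$ also lies in the convexity region of $\tilde{G}$, which the paper leaves implicit.
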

\begin{proof}
Denote $\underline{\lambda}_i$ eigenvalues of $\underline{u}_{ij}$, $1\leq i, j \leq n$ and $\underline{\lambda}'_{\alpha}$ eigenvalues of $\underline{u}_{\alpha\beta}$, $1\leq \alpha, \beta \leq n-1$. By the same argument as in Lemma~\ref{lem:n-3} we have
\begin{equation} \label{schur_horn_applied}
\arctan\underline{\lambda}'_1+\cdots+\arctan\underline{\lambda}'_{n-1}+\arctan \underline{u}_{nn}\geq \sum\arctan\underline{\lambda}_i. 
\end{equation}
On the other hand, by Lemma~\ref{lem:CNSLem} we know that for any $\epsilon>0$, if $u_{nn}(0)$ is large enough we can ensure
\begin{equation} \label{lambda'2lambda}
\sum_{\alpha \leq n-1} \arctan \lambda'_{\alpha} \leq \sum_{\alpha\leq n-1}\arctan \lambda_{\alpha}+\frac{\epsilon}{2}.
\end{equation}
Hence by ~\eqref{schur_horn_applied} and ~\eqref{lambda'2lambda}, we have
\[
\begin{aligned}
\sum_{\alpha\leq n-1}\arctan\underline{\lambda}'_{\alpha}-\sum_{\alpha\leq n-1}\arctan\lambda'_{\alpha} &\geq \sum_{1\leq i\leq n}\arctan\underline{\lambda}_i -\arctan\underline{u}_{nn}\\
&\quad -\sum_{\alpha \leq n-1} \arctan \lambda_{\alpha}-\frac{\epsilon}{2}  \\
&\geq h(x)-(h(x)-\arctan\lambda_n)-\arctan\underline{u}_{nn}-\frac{\epsilon}{2}  \\
&\geq \arctan\lambda_n-\arctan\underline{u}_{nn}-\frac{\epsilon}{2}  \\
&\geq \arctan u_{nn}-\arctan\underline{u}_{nn}-\epsilon.
\end{aligned}
\]
Assuming $u_{nn}(0)$ is large enough, there exists $c_1>0$ such that at the origin
\begin{equation} \label{n-1_subsoln}
\sum_{\alpha\leq n-1} \arctan\underline{\lambda}'_{\alpha}\geq\sum_{\alpha\leq n-1} \arctan \lambda'_{\alpha}+c_1.
\end{equation}
By concavity,
\[
\tilde{G}_0^{\alpha \beta} (\underline{u}_{\alpha\beta}-u_{\alpha\beta})(0)\geq \tilde{G}(\underline{u}_{\alpha\beta}(0)) - \tilde{G}(u_{\alpha\beta}(0)).
\]
Hence ~\eqref{n-1_subsoln} and the double tangential estimate yield a $\tau>0$ such that
\[
\tilde{G}_0^{\alpha \beta} (\underline{u}_{\alpha\beta}-u_{\alpha\beta})(0)\geq  \exp \bigg(-A \sum_{\alpha=1}^{n-1} \arctan \lambda'_\alpha - A {\pi \over 2} \bigg) (1-e^{-Ac_1}) \geq \tau.
\]

\end{proof}

We can now construct the test function for the boundary $C^2$ estimate.  Recall that $0\in \del \Omega$ is a point where $\tilde{G}(u_{\alpha\beta})(x)-\psi(x)$ achieves its minimum value.  By the preceding discussion we may assume that $u_{nn}(0) \gg 1$, otherwise we are done. Let $\eta := \tilde{G}_0^{\alpha\beta}\sigma_{\alpha\beta}$. Contracting (\ref{bdd_identity}) with $\tilde{G}_0^{\alpha \beta}$ and applying Lemma \ref{G_0_tau_lemma}, we see there exists $\epsilon$ depending on the $C^1$ estimate, and a constant $c_{\tau}$ depending only on $\tau$ from Lemma~\ref{G_0_tau_lemma} such that
\[
\eta(0)\geq \frac{\tau}{(u-\underline{u})_n(0)}\geq 2\epsilon c_\tau.
\]
By the tangential $C^2$ estimate, $\tilde{G}_0^{\alpha\beta}$ is uniformly elliptic, with universally controlled eigenvalues.  In particular, there exists a universal constant $\delta'>0$ such that $\eta \geq \epsilon c_\tau$ in a small neighbourhood $B_{\delta'}(0)\cap \Omega$. We construct 
\[
\Phi =-(u-\underline{u})_n+\frac{1}{\eta}\tilde{G}_0^{\alpha\beta}(\underline{u}_{\alpha\beta}(x)-u_{\alpha\beta}(0))-\frac{\psi(x)-\psi(0)}{\eta}.
\]
On the boundary $\partial\Omega$, by (\ref{bdd_identity})
\[
\begin{aligned}
\Phi &=\frac{1}{\eta}\{-(u-\underline{u})_n(x)\tilde{G}_0^{\alpha\beta}\sigma_{\alpha\beta}+\tilde{G}_0^{\alpha\beta}\underline{u}_{\alpha\beta}(x)-\tilde{G}_0^{\alpha\beta}u_{\alpha\beta}(0) \} -\frac{\psi(x)-\psi(0)}{\eta} \\
&=\frac{1}{\eta}\{ \tilde{G}_0^{\alpha\beta}(u_{\alpha\beta}(x)-u_{\alpha\beta}(0)) \}-\frac{\psi(x)-\psi(0)}{\eta} \\
&\geq  \frac{1}{\eta}\{ \tilde{G}(u_{\alpha\beta}(x))-\tilde{G}(u_{\alpha\beta}(0)) \}-\frac{\psi(x)-\psi(0)}{\eta} \\
&=\frac{1}{\eta} \{ \tilde{G}(u_{\alpha\beta}(x))-\psi(x)-(\tilde{G}(u_{\alpha\beta}(0))-\psi(0)) \}  \\
&\geq  0.
\end{aligned}
\]
The first inequality follows from the concavity of $\tilde{G}$, (see Corollary~\ref{cor:tGconcave}).  On $\del B_{\delta'}(0) \cap \Omega$, we have $\Phi \geq -C$ by the bound for the tangential second derivatives of $u$. We compute
\begin{equation}\label{eq:LPhi_comp}
\begin{aligned}
L \Phi &= - h_n + F^{ij} \underline{u}_{nij} +\frac{1}{\eta}\tilde{G}_0^{\alpha\beta} F^{ij} \underline{u}_{\alpha\beta i j} -\frac{2}{\eta^2}\tilde{G}_0^{\alpha\beta} F^{ij} \underline{u}_{\alpha\beta i} \eta_j \\
&+ F^{ij} D_i D_j \bigg(\frac{1}{\eta}\bigg)\tilde{G}_0^{\alpha\beta}(\underline{u}_{\alpha\beta}(x)-u_{\alpha\beta}(0))  - F^{ij} D_i D_j \bigg( \frac{\psi-\psi(0)}{\eta} \bigg).
\end{aligned}
\end{equation}
Thus we have $L \Phi \leq C (1 + \sum_i F^{ii}) \leq C$. As in the estimate for mix-tangential derivative, we can again use Lemma \ref{Lv_lemma} and take $A \gg B\gg 1$ large enough to obtain
\[
\begin{aligned}
L(Av+B|x|^2+\Phi)&\leq 0\ \mathrm {in} \ B_{\delta}(0)\cap \Omega \nonumber\\
 Av+B|x|^2+\Phi &\geq 0 \ \mathrm {on} \ \partial (B_{\delta'}(0)\cap \Omega).
\end{aligned}
\]
By the maximum principle, $Av+B|x|^2+\Phi\geq 0$ in $\ B_{\delta'}(0)\cap \Omega$. Thus $\Phi_n(0)\geq-(Av+B|x|^2)_n\geq -C$, which gives $u_{nn}(0) \leq C$. As previously explained, this yields Lemma~\ref{lem:bdC2main}, and hence the boundary $C^2$ estimate is complete.

\subsection{Solving the Equation} \label{cont_meth_sect}
We have established a priori estimates up to $C^2$ of solutions to the Dirichlet problem ~\eqref{eq:DirProb}. By the Evans-Krylov theorem ~\cite{E,K}, since $u$ also solves equation ~\eqref{eq:DiriG} which involves the concave operator $G$, we have control of the H\"older continuity of the second derivatives. Differentiating the equation and applying the Schauder estimates gives us
\begin{equation} \label{eq:C2alpha_est}
\| u \|_{C^{3,\alpha}(\overline{\Omega})} \leq  C(\Omega,\|\underline{u}\|_{C^4(\overline{\Omega})}, \| h \|_{C^2(\overline{\Omega})},\delta),
\end{equation}
for any $u \in C^4(\overline{\Omega})$ solving ~\eqref{eq:DirProb}, and any $0<\alpha<1$. 
\par
To solve the equation, we use the continuity method. Suppose $\underline{u}$ is a $C^4(\overline{\Omega})$ subsolution satisfying $F(D^2 \underline{u}) \geq h, \ \underline{u}|_{\p \Omega}=\phi,$ with $h: \overline{\Omega} \rightarrow [(n-2){\pi \over 2} + \delta, \, n \frac{\pi}{2})$ in $C^{2,\alpha}(\overline{\Omega})$. Consider the family of equations
\begin{equation}\label{eq:ContMeth}
\begin{aligned}
F(D^2u_t)&= th + (1-t) h_0 \ {\rm in} \ \Omega,\\
u_t &= \phi \ \ \ {\rm on} \ \del \Omega,
\end{aligned}
\end{equation}
where $h_0= F(D^2 \underline{u})$. Let
\[
S = \{ t \in [0,1] : {\rm there} \ {\rm exists} \ u_t \in C^{4,\alpha}(\overline{\Omega}) \ {\rm solving} \ ~\eqref{eq:ContMeth} \}.
\]
We have $0 \in S$ by taking $u_0 = \underline{u}$. The fact that $S$ is open follows from the invertibility of the linearized operator and the implicit function theorem. That $S$ is closed follows from the a priori estimates. Indeed, the subsolution is preserved along the path, and the right-hand side of ~\eqref{eq:ContMeth} stays greater than $(n-2)\frac{\pi}{2} + \delta$. If $u_t \in C^{4,\alpha}(\overline{\Omega})$ solves ~\eqref{eq:ContMeth}, may apply estimate ~\eqref{eq:C2alpha_est} and we see that $S$ is closed. Hence $S=[0,1]$.
\par
It follows that there exists a smooth solution $u \in C^{\infty}(\overline{\Omega})$ to the Dirichlet problem ~\eqref{eq:DirProb} if all data is smooth, since we may differentiate the equation and apply Schauder theory. Uniqueness of the solution follows from the maximum principle for fully nonlinear PDE. If the right-hand side $h \in C^2(\overline{\Omega})$, we may take a sequence of smooth right-hand sides $h_\epsilon$ approximating $h$, obtain a sequence of solutions $u_\epsilon$, and apply estimate \eqref{eq:C2alpha_est} and a limiting process to solve the equation.

\section{The Complex Dirichlet Problem} \label{cplx_diri}
\subsection{Preliminary Estimates}
The techniques from the previous section can be adapted to solve the complex Dirichlet problem
\begin{equation} \label{eq:cplx_diri}
F(\p \bar{\p} u) := \sum_i \arctan \lambda_i = h(z), \ \ u|_{\p \Omega} = \phi
\end{equation}
in a domain $\Omega \subset \mathbb{C}^n$, where $\lambda$ denotes the eigenvalues $\lambda_1, \dots, \lambda_n$ of the complex Hessian $u_{i \bar{j}} = \p_i \p_{\bar{j}} u$. We use the notation $\p_i = {1 \over 2} ( {\p \over \p x_i} - \sqrt{-1} {\p \over \p y_i})$, and $\p_{\bar{i}} = {1 \over 2} ( {\p \over \p x_i} + \sqrt{-1} {\p \over \p y_i})$.
\par
The linearized operator becomes $L = F^{i \bar{j}} \p_i \p_{\bar{j}}$, where $F^{i \bar{j}} = {\p F \over \p u_{i \bar{j}}}$. We have $\underline{u} \leq u \leq w$ as in Lemma ~\ref{lem:c0 estimate}, and applying $L$ to $Q = \pm u_k + {B \over 2} |z|^2$ gives the gradient estimate as in Proposition ~\ref{prop:c1_est}. Hence
\[
\| u \|_{C^1(\overline{\Omega})} \leq C(\Omega,\|\underline{u}\|_{C^1(\overline{\Omega})},\| h \|_{C^1(\overline{\Omega})},\delta).
\]
For the $C^2$ estimate, we can again make use of the operator 
\[
G(\p \bar{\p} u) = g(\lambda) = -e^{-A \sum_i \arctan \lambda_i}
\]
which is concave on the space 
\[
\Gamma = \{ H \in {\rm Herm}(n) : F(H) \geq (n-2) {\pi \over 2} + \delta \},
\]
by Lemma \ref{Concave_G}. Here ${\rm Herm}(n)$ is the space of $n \times n$ Hermitian matrices. For any $\xi \in S^{2n-1}$, we let $D_\xi = \sum_{k=1}^n \xi^k {\p \over \p x_k} + \sum_{k=n+1}^{2n} \xi^k {\p \over \p y_k}$. Suppose
\[
\sup_{(\xi,z) \in S^{2n-1} \times \overline{\Omega}} \bigg( D_\xi D_\xi u + {B \over 2}|z|^2 \bigg)
\]
attains its maximum at an interior point $z_0 \in \Omega$ in the direction $\xi_0$. We may choose complex coordinates such that $\xi_0 = {\p \over \p x_1}$. By differentiating the equation twice
\[
G^{i \bar{j}} (D_{x_1} D_{x_1} u)_{i \bar{j}} = -G^{i \bar{j},k \bar{\ell}} D_{x_1} u_{i \bar{j}} D_{x_1} u_{k \bar{\ell}} + D_{x_1} D_{x_1} h.
\]
By concavity, $-G^{i \bar{j},k \bar{\ell}} D_{x_1} u_{i \bar{j}} D_{x_1} u_{k \bar{\ell}} \leq 0$. As in Proposition ~\ref{prop:c2_int}, we have
\[
G^{i \bar{j}} \bigg({B \over 2}|z|^2 \bigg)_{i \bar{j}} = B \sum_i G^{i \bar{i}} \geq \delta'>0,
\]
and hence by choosing $B \gg 1$ we can force $G^{i \bar{j}} (D_{x_1} D_{x_1} u + {B \over 2}|z|^2)_{i \bar{j}} > 0$ at $z_0$. By the maximum principle, for any $\xi \in S^{2n-1}$ we have
\[
(D_\xi D_\xi u + {B \over 2}|z|^2) \leq C(\Omega, \|h\|_{C^2(\overline{\Omega})},\delta)(1 + \max_{\p \Omega} |D^2 u|).
\]
Since $\Delta u \geq 0$, we can deduce the following estimate on the real Hessian of a solution $u$
\[
\sup_{\overline{\Omega}} |D^2 u| \leq C(\Omega, \|h\|_{C^2(\overline{\Omega})},\delta)(1 + \max_{\partial\Omega}|D^2u|).
\] 
It remains to estimate the real Hessian of $u$ at the boundary. 
\subsection{Boundary Mixed Tangential-Normal Estimate}
Fix a point $z_0 \in \p \Omega$ and choose coordinates such that $x_n$ is in the direction of the inner normal and locally $\p \Omega$ is given by
\[
x_n = \rho(t') = {1 \over 2} \sum_{\alpha,\beta < 2n} b_{\alpha \beta} t_\alpha t_\beta + O(|t'|^3),
\]
where we write $t_\alpha = y_\alpha$, for $1 \leq \alpha \leq n$ and $t_{\alpha+n}=x_\alpha$, for $1\leq \alpha \leq n-1$ and $t'=(t_1, \dots, t_{2n-1})$. Differentiating $(u-\underline{u})(t',\rho(t'))=0$ gives
\begin{equation} \label{eq:order_t'}
|(u-\underline{u})_{t_\beta}(t',\rho(t'))| \leq C|t'|, \ \ \beta < 2n,
\end{equation}
for a universal constant $C$.  As before, the double tangential derivatives are under control:
\[
|u_{t_\alpha t_\beta}(0)| \leq C, \ \ {\alpha,\beta < 2n}.
\]
We define $v(z)$ by
\[
v = (u-\underline{u})+t d - N d^2,
\]
 where $d(z) = d(z,\p \Omega)$. An analogous argument to Lemma ~\ref{Lv_lemma} (only difference is that the complex gradient $\nabla d = (d_1, \dots, d_n)$ now has norm ${1 \over 2}$) gives small $\delta', \ \epsilon >0$ such that
\begin{equation}
\begin{aligned} \label{eq:Lv_cplx}
L v&\leq -\epsilon_1, \ \ \text{ inside } \ \Omega \cap B_{\delta'}(0), \\
v&\geq 0, \ \ \text{ on} \ \partial (\Omega  \cap B_{\delta'}(0)).
\end{aligned}
\end{equation}
We next define the approximate tangential operator
\[
T_\alpha = \frac{\partial}{\partial t_{\alpha}}+ \sum_{\beta<2n} b_{\alpha\beta} \,  t_\beta \frac{\partial}{\partial x_n}.
\]
As in ~\eqref{T_approx}, we have
\begin{equation} \label{eq:T_approx_cplx}
|T_\alpha (u - \underline{u})| \leq C |t'|^2.
\end{equation}

\begin{lem} \label{lem:LTu} With the above notation, there is a universal constant C so that the following estimate holds:
\[
|L T_\alpha (u-\underline{u})| \leq C + F^{i \bar{j}} \p_i (u-\underline{u})_{y_n} \p_{\bar{j}} (u-\underline{u})_{y_n} .
\]
\end{lem}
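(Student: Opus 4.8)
The plan is to expand $L\,T_\alpha(u-\underline{u})$ by the product rule and isolate the one term that is not yet controlled by the gradient estimate and $\|\underline{u}\|_{C^4(\overline{\Omega})}$. Write $w=u-\underline{u}$ and $P=\sum_{\beta<2n}b_{\alpha\beta}t_\beta$, so that $T_\alpha w=w_{t_\alpha}+P\,w_{x_n}$ and $L\,T_\alpha w=F^{i\bar{j}}(T_\alpha w)_{i\bar{j}}$. Since $\p/\p t_\alpha$ is a constant-coefficient real vector field (a real or imaginary part of some $\p/\p z_k$, $k\le n$), differentiating the equation $F(\p\bar{\p}u)=h$ gives $L(u_{t_\alpha})=h_{t_\alpha}$ and $L(u_{x_n})=h_{x_n}$; combined with the elementary bound $|F^{i\bar{j}}M_{i\bar{j}}|\le\big(\sum_i F^{i\bar i}\big)\|M\|\le n\|M\|$ and $\|\underline{u}\|_{C^4(\overline{\Omega})}\le C$, this shows $|L(w_{t_\alpha})|\le C$ and $|L(w_{x_n})|\le C$. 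Since $P$ is linear in the real coordinates, $\p_i\p_{\bar{j}}P=0$, so
\[
L(P\,w_{x_n})=F^{i\bar{j}}\big[(\p_{\bar{j}}P)\,\p_i w_{x_n}+(\p_i P)\,\p_{\bar{j}}w_{x_n}\big]+P\,L(w_{x_n}),
\]
and $|P\,L(w_{x_n})|\le C\delta'$ on $B_{\delta'}(0)$. Hence the whole estimate reduces to bounding the cross term $2\,\mathrm{Re}\big[F^{i\bar{j}}(\p_{\bar{j}}P)\,\p_i w_{x_n}\big]$, which involves second derivatives of $u$ that are not a priori controlled.

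To handle this term I would decompose $\p_i w_{x_n}$ according to the complex structure: from $\p_{x_n}=\p_n+\p_{\bar{n}}$ one gets $\p_i w_{x_n}=w_{ni}+w_{i\bar{n}}$, where $w_{ni}=\p_i\p_n w$ is a holomorphic-Hessian component and $w_{i\bar{n}}$ a complex-Hessian component. The complex-Hessian part is harmless once contracted with $F^{i\bar{j}}$: as $(F^{i\bar{j}})$ and $(u_{i\bar{j}})$ commute and their product has operator norm $\max_i|\lambda_i|(1+\lambda_i^2)^{-1}\le\tfrac12$, contracting $F^{i\bar{j}}$ against the $\bar{n}$-th column of $(u_{i\bar{j}})$ gives a vector of norm $\le\tfrac12$; together with $\|(F^{i\bar{j}})\|\le 1$ and $\|\underline{u}\|_{C^2(\overline{\Omega})}\le C$ this yields $|F^{i\bar{j}}(\p_{\bar{j}}P)\,w_{i\bar{n}}|\le C$. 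For the holomorphic-Hessian part I would use $\p_{y_n}=i(\p_n-\p_{\bar{n}})$, which gives $\p_i w_{y_n}=i(w_{ni}-w_{i\bar{n}})$ and hence $w_{ni}=-i\,\p_i w_{y_n}+w_{i\bar{n}}$; the $w_{i\bar{n}}$ piece is again controlled as above, and for the remaining term Cauchy--Schwarz for the positive Hermitian form $F^{i\bar{j}}$ gives
\[
2\big|F^{i\bar{j}}(\p_{\bar{j}}P)\,\p_i w_{y_n}\big|\le F^{i\bar{j}}(\p_i P)(\p_{\bar{j}}P)+F^{i\bar{j}}\,\p_i w_{y_n}\,\p_{\bar{j}}w_{y_n}\le C+F^{i\bar{j}}\,\p_i(u-\underline{u})_{y_n}\,\p_{\bar{j}}(u-\underline{u})_{y_n},
\]
since $\p P$ is bounded and $\sum_i F^{i\bar i}\le n$. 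Collecting the contributions yields the stated inequality.

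The main obstacle is exactly the cross term $F^{i\bar{j}}(\p_{\bar{j}}P)\,\p_i w_{x_n}$. In the real case the analogous term vanishes, since there $T_\alpha$ is built from genuine rotation vector fields under which $F(D^2u)$ is invariant, so that $L(T_\alpha u)=T_\alpha h$ exactly; in the complex setting $t_\beta\,\p/\p x_n$ does not generate a symmetry of $F(\p\bar{\p}u)$ (the corresponding real rotation $x_\beta\p_{x_n}-x_n\p_{x_\beta}$ is not unitary, so it changes the eigenvalues of the complex Hessian), this identity fails, and one genuinely picks up a term linear in the uncontrolled second derivatives. The two structural facts that rescue the argument are that (i) only the holomorphic-Hessian component $w_{ni}$ of $\p_i w_{x_n}$ is dangerous, because the complex-Hessian component is automatically controlled once paired with $F^{i\bar{j}}=\p F/\p u_{i\bar{j}}$, and (ii) stripping off this component turns the normal derivative $\p_{x_n}$ into the single totally real tangential derivative $\p_{y_n}$ — precisely the quantity the ensuing barrier can absorb, by adjoining a term of the form $-\,\mathrm{const}\cdot(u-\underline{u})_{y_n}^2$, whose complex Hessian produces $-\,F^{i\bar{j}}\p_i(u-\underline{u})_{y_n}\,\p_{\bar{j}}(u-\underline{u})_{y_n}$ up to bounded terms.
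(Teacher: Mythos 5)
Your argument is correct and is essentially the paper's proof (itself adapted from Li's Lemma 4.3): both split $\p_i(u-\underline{u})_{x_n}$ into the complex-Hessian part, controlled because $F^{i\bar j}u_{k\bar j}$ has eigenvalues $\lambda_k/(1+\lambda_k^2)$ bounded by $1/2$, plus a piece expressible through $\p_i(u-\underline{u})_{y_n}$, which is then absorbed by Cauchy--Schwarz for the positive form $F^{i\bar j}$. The only difference is organizational (product rule on $P\,w_{x_n}$ versus the paper's explicit $\mathrm{Re}/\mathrm{Im}$ bookkeeping), and your closing remarks about why the real-variable symmetry argument fails and how the $-(u-\underline{u})_{y_n}^2$ term in $\Psi$ absorbs the quadratic remainder match the paper's subsequent use of the lemma.
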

\begin{proof}
The proof is adapted from the proof of \cite[Lemma 4.3]{LiSY}.  We include the brief computation for the reader's convenience.  First, we compute
\[
\begin{aligned}
L T_{\alpha} u &= T_{\alpha}h + \sum_{\beta <2n} b_{\alpha \beta} F^{i\bar{j}}\left(\del_{i}t_{\beta}\del_{\bar{j}}u_{x_n} + \del_{\bar{j}}t_{\beta}\del_{i}u_{x_{n}}\right)\\
&= T_{\alpha}h + 2\sum_{\beta<2n} b_{\alpha \beta}F^{i\bar{j}}\left(\del_{i}t_{\beta}\del_{\bar{j}}u_{n} + \del_{\bar{j}}t_{\beta}\del_{i}u_{\bar{n}}\right)\\
&\quad +\sqrt{-1}\sum_{\beta<2n}b_{\alpha \beta}F^{i\bar{j}}(\del_i t_\beta \del_{\bar{j}}u_{y_n} - \del_{\bar{j}}t_{\beta}\del_{i}u_{y_{n}}).
\end{aligned}
\]
By directly computing $\del_i t_{\beta}$ we obtain
\[
\begin{aligned}
LT_{\alpha}u &= T_{\alpha}h + 2\sum_{\beta=1}^{n}b_{\alpha\beta} {\rm Im}(F^{\beta\bar{j}}u_{n\bar{j}}) + 2\sum_{\beta=1}^{n-1}b_{\alpha\beta+n}{\rm Re}(F^{\beta\bar{j}}u_{n\bar{j}})\\
&\quad + \sqrt{-1}\sum_{\beta<2n}b_{\alpha \beta}F^{i\bar{j}}(\del_i t_\beta \del_{\bar{j}}u_{y_n} - \del_{\bar{j}}t_{\beta}\del_{i}u_{y_{n}}).
\end{aligned}
\]
Since the left-hand side of the equation is real, we must have
\[
{\rm Re}\left(\sum_{\beta<2n}b_{\alpha \beta}F^{i\bar{j}}(\del_i t_\beta \del_{\bar{j}}u_{y_n} - \del_{\bar{j}}t_{\beta}\del_{i}u_{y_{n}})\right)=0,
\]
which of course can be seen directly by inspection.  From the linearization of $F$, the eigenvalues of the Hermitian matrix $F^{i\bar{j}}u_{k\bar{j}}$ are bounded in absolute value by $1$.  To see this just recall that by choosing coordinates so that $u_{k\bar{j}}$ is diagonal with $u_{k\bar{k}} = \lambda_{k}$, we get that 
\begin{equation}\label{eq:LinbddEV}
F^{k\bar{j}} = \frac{\delta_{k\bar{j}} }{(1+\lambda_k^2)},
\end{equation} 
so that $F^{i\bar{j}}u_{k\bar{j}}$ has eigenvalues $\frac{\lambda_k}{1+\lambda_{k}^{2}}$.  It follows immediately that the components of the matrix $F^{i\bar{j}}u_{k\bar{j}}$ are bounded in norm by a constant depending only on the dimension.  By the same computation applied to $T_{\alpha}\underline{u}$, and using that $F^{i\bar{j}}\underline{u}_{k\bar{j}}$ has bounded eigenvalues by ~\eqref{eq:LinbddEV}, we obtain
\[
\left|LT_{\alpha}(u- \underline{u})\right| \leq C + \left|{\rm Im}\sum_{\beta<2n}b_{\alpha \beta}F^{i\bar{j}}(\del_i t_\beta \del_{\bar{j}}(u-\underline{u})_{y_n} - \del_{\bar{j}}t_{\beta}\del_{i}(u-\underline{u})_{y_{n}})\right|
\]
for a universal constant $C$.  The lemma follows from the Cauchy-Schwarz inequality.
\end{proof}
We consider
\[
\Psi = Av + B|z|^2 - (u_{y_n}-\underline{u}_{y_n})^2 \pm T_\alpha (u- \underline{u}),
\]
for constants $A,B$ to be determined.  Using ~\eqref{eq:order_t'} and ~\eqref{eq:T_approx_cplx} we see that on $\p \Omega \cap B_{\delta'}(0)$ we have
\[
\Psi \geq B|z|^{2} - C_1|z|^2
\]
for a universal constant $C_1$.  Taking $B \gg C_1$, we can ensure that $\Psi \geq 0$ on $\p \Omega \cap B_{\delta'}(0)$.  Furthermore, on $\p B_{\delta'}(0) \cap \Omega$, since all derivatives of $u$ are bounded and $B|z|^2=B\delta'^2$, we can obtain $\Psi \geq 0$ for $B$ large enough.  In particular, we can find a universal constant $B$ so that $\Psi \geq 0$ on $\p(\Omega \cap B_{\delta'}(0))$.  On the other hand, using ~\eqref{eq:Lv_cplx} and Lemma ~\ref{lem:LTu}, we compute
\[
L \Psi \leq -A \epsilon_1 +nB - 2 (u_{y_n}-\underline{u}_{y_n}) (h_{y_n}-F^{i \bar{j}}\underline{u}_{y_n i \bar{j}}) +C.
\]
Taking $A \gg B \gg 1$, we can arrange that $L\Psi \leq 0$. Hence
\[
L \Psi \leq 0, \ \ \Psi \geq 0 \ \mathrm {on} \ \partial (B_{\delta'}(0)\cap \Omega).
\]
This implies $\Psi \geq 0$ in $B_{\delta'}(0)\cap \Omega$. At the origin, we have $v(0) = (u_{y_n}-\underline{u}_{y_n})(0)=0$, hence $\Psi(0)=0$. Therefore
\[
\Psi_{x_n}(0)= Av_{x_n}(0)\pm u_{t_\alpha x_n}(0) \geq 0,
\]
which gives the mixed second derivative bounds $|u_{t_\alpha x_n}(0)| \leq C$ for all $\alpha < 2n$.
\bigskip \par
\subsection{Boundary Double Normal Estimate}
It remains to estimate $u_{x_n x_n}$ on $\p \Omega$.  The proof uses an argument of Guan-Sun \cite{GS}, which is based on an idea of Trudinger \cite{T1}, similar to the argument used in the real case.  As before, let $d(x) = d(x, \del \Omega)$ be the distance to the boundary, and define a vector bundle on $\del \Omega$ by
\[
T^{1,0}_{p}\del \Omega = \{ \xi \in T^{1,0}\mathbb{C}^{n} : \nabla_{\xi}d =0 \}.
\]
$T^{1,0}\del \Omega$ is a complex subbundle of $T^{1,0}\mathbb{C}^{n}|_{\p \Omega}$ of rank $n-1$.  For the sake of concreteness, let us give a description of the fiber of $T^{1,0}\del \Omega$  at a point $p \in \del \Omega$ in terms of local coordinates.  Choose a local coordinate $x_n$ so that $\frac{\del}{\del x_{n}}$ is the inward normal vector for $\del \Omega$ at $p$, and define a local coordinate $y_{n}$ so that $\frac{\del}{\del y_{n}} = J \frac{\del}{\del x_{n}}$ near $p$.  Then $z_{n} = x_n+\sqrt{-1}y_{n}$ defines a local holomorphic coordinate.  Complete this to a local holomorphic coordinate system by taking $z_{1}, \ldots, z_{n-1}$ so that $\frac{\del}{\del z_i}$ is orthogonal to $\frac{\del}{\del z_n}$ at $p$.  Then, at $p$ we have
\[
T^{1,0}_{p}\del \Omega = {\rm Span}_{\mathbb{C}}\left\{ \frac{\del}{\del z_1}, \ldots,\frac{\del}{\del z_{n-1}}\right\}.
\]
The above description makes it clear that, locally, near any point $p \in \del \Omega$, we can choose a smooth, orthonormal frame $\{\zeta_i\}_{i=1}^{n}$ of $T^{1,0}\mathbb{C}^{n}$ such that $\zeta_1, \dots, \zeta_{n-1}$ is an orthonormal frame in $T^{1,0}(\p \Omega)$, and ${\mathrm Re}(\zeta_n)$ parallel to the inner normal of $\p \Omega$ when restricted to $\del \Omega$.  Furthermore, at $p$ we may assume that $\zeta_i = \frac{\del}{\del z_i}$. Let $\sigma_{\alpha \bar{\beta}}=\langle \nabla_{\bar{\zeta}_\beta} \zeta_\alpha, \overline{\zeta_n} \rangle$, and write
\[
u_{\alpha \bar{\beta}}= \nabla_{\bar{\zeta}_\beta} (\nabla_{\zeta_\alpha}u) - \nabla_{\nabla_{\bar{\zeta}_\beta} \zeta_\alpha} u,
\]
for the complex Hessian. On the boundary $\p \Omega$, we will use $\lambda'$ to denote the eigenvalues of $u_{\alpha \bar{\beta}}$. With the above notation, 
\begin{equation} \label{eq:cplx_bdd_identity}
u_{\alpha \bar{\beta}}-\underline{u}_{\alpha \bar{\beta}}=-{1 \over 2} (u-\underline{u})_{x_n} \sigma_{\alpha \bar{\beta}},
\end{equation}
at $p \in \del \Omega$. 
\par
At a point $p \in \p \Omega$ with the above coordinates, instead of estimating $u_{x_n x_n}(p)$ directly, we will estimate $u_{n \bar{n}}(p)$, which is equivalent since $u_{n\bar{n}} = \frac{1}{4}\left(u_{x_{n}x_{n}}+u_{y_{n}y_{n}}\right)$ and $u_{y_n y_n}(p)$ is bounded. In analogy with the real case, to estimate $u_{n \bar{n}} \leq C$ it suffices to prove the existence of constants $R_0,c_0 >0$ such that for all $R \geq R_0$,
\begin{equation} \label{eq:cplx_lower_bdd}
g(\lambda'(u_{\alpha \bar{\beta}}), R) > \psi(z)+c_0, \qquad \forall z \in \del\Omega,
\end{equation}
where $\psi(z)=-e^{-h(z)}$. Proceeding as in the real case, for $z \in \del \Omega$ we define operators 
\[
\begin{aligned}
\tilde{G}(u_{\alpha \bar{\beta}}(x))&= - \exp \bigg(-A \sum_{\alpha=1}^{n-1} \arctan \lambda'_\alpha - A {\pi \over 2} \bigg), \\
\tilde{G}_0^{\alpha \bar{\beta}}&=\frac{\partial\tilde{G}}{\partial u_{\alpha \bar{\beta}}}(u_{\alpha \bar{\beta}}(0)).
\end{aligned}
\]
Consider the function $\tilde{G}(u_{\alpha \bar{\beta}})(z)-\psi(z)$ on $\del\Omega$.  Assume the minimum of this function is achieved at $p \in \del \Omega$. Choose coordinates as above so that $p$ is the origin and $\zeta_i(p) = \frac{\p}{\p z_i}$. As in the real case, it suffices to obtain an upper bound $u_{n \bar{n}}(0) \leq C$ to prove ~\eqref{eq:cplx_lower_bdd}.
\par
By Lemma ~\ref{G_0_tau_lemma}, for $u_{n \bar{n}}(0)$ large enough, there exists $\tau>0$ such that
\[
\tilde{G}_0^{\alpha\overline{\beta}}(\underline{u}_{\alpha\overline{\beta}}-u_{\alpha\overline{\beta}})(0)\geq\tau>0.
\] 
Let $\eta := \tilde{G}_0^{\alpha \bar{\beta}}\sigma_{\alpha \bar{\beta}}$. Arguing in the same way as the paragraph following Lemma~\ref{G_0_tau_lemma}, there exists a universal constant $\delta'>0$ such that $\eta \geq \epsilon$ in a small neighbourhood $B_{\delta'}(0)\cap \Omega$. We construct 
\[
\Phi =-{1 \over 2}(u-\underline{u})_{x_n}+\frac{1}{\eta}\tilde{G}_0^{\alpha \bar{\beta}}(\underline{u}_{\alpha \bar{\beta}}(z)-u_{\alpha \bar{\beta}}(0))-\frac{\psi(z)-\psi(0)}{\eta}.
\]
As in the real case, on the boundary $\partial\Omega$ we have $\Phi \geq 0$, and on $\del B_{\delta'}(0) \cap \Omega$ we have $\Phi \geq -C$.  Computing as in~\eqref{eq:LPhi_comp}, one estimates $L \Phi \leq C$ for a universal constant $C$. As before, we can find $A \gg B\gg 1$ large and universal to obtain
\[
\begin{aligned}
L(Av+B|z|^2+\Phi)&\leq 0\ \mathrm {in} \ B_{\delta'}(0)\cap \Omega \nonumber\\
 Av+B|z|^2+\Phi &\geq 0 \ \mathrm {on} \ \partial (B_{\delta'}(0)\cap \Omega).
\end{aligned}
\]
By the maximum principle, $\Phi_{x_n}(0)\geq-(Av+B|z|^2)_{x_n} \geq -C$, which gives $u_{x_n x_n}(0) \leq C$.  We conclude an upper bound $u_{n\bar{n}}(0) \leq C$. This completes the boundary $C^2$ estimate.

\subsection{Higher Order Estimates} We have therefore shown
\[
\| u \|_{L^\infty(\overline{\Omega})}+ \| Du \|_{L^\infty(\overline{\Omega})}+ \| D^2 u \|_{L^\infty(\overline{\Omega})} \leq C(\Omega,\|\underline{u}\|_{C^4(\overline{\Omega})}, \| h \|_{C^2(\overline{\Omega})},\delta).
\]
The $C^{2,\alpha}$ interior estimates follow from the Evans-Krylov theorem and an extension trick exploited introduced by Wang \cite{YW} in the study of the complex Monge-Amp\`ere equation.  The extension argument is used to extend the concave operator $G$ from the Hermitian matrices to the symmetric matrices.  We refer the reader to \cite{YW, TWWY} and, for instance, the proof of \cite[Lemma 6.1]{CJY}, but leave the details to the interested reader.  The boundary $C^{2,\alpha}$ estimates follow from Krylov \cite{K}. This establishes the a priori estimate
\[
\| u \|_{C^{2,\alpha}(\overline{\Omega})} \leq C(\Omega,\|\underline{u}\|_{C^4(\overline{\Omega})}, \| h \|_{C^2(\overline{\Omega})},\delta),
\]
and a continuity method argument as in \S \ref{cont_meth_sect} completes the proof of Theorem \ref{thm:CDirProb}.

\end{document}